\newtheorem{dfn}{Definition}[section]
\newtheorem{thm}[dfn]{Theorem}
\newtheorem{prop}[dfn]{Proposition}
\newtheorem{lem}[dfn]{Lemma}
\newtheorem{cor}[dfn]{Corollary}
\newtheorem*{cor*}{Corollary}
\theoremstyle{definition}
\newtheorem{rem}[dfn]{Remark}
\newtheorem{exa}[dfn]{Example}
\newcommand{\K}{K\"ahler }
\newcommand{\X}{\mathscr{X}}
\newcommand{\XX}{\mathscr{X}^{\iota}}
\newcommand{\xs}{\mathscr{X}/S}
\newcommand{\xss}{\mathscr{X}^{\iota}/S}
\newcommand{\elm}{\operatorname{elm}}
\newcommand{\mon}{\operatorname{Mon}^2}
\newcommand{\ktm}{\operatorname{KT}(M)}
\newcommand{\ktmo}{\operatorname{KT}(M_0)}
\newcommand{\vol}{\operatorname{Vol}}
\newcommand{\rk}{\operatorname{rk}}
\newcommand{\sign}{\operatorname{sign}}
\newcommand{\G}{\mu_2}
\newcommand{\Address}{{
  \bigskip
  \footnotesize

  \textsc{Department of Mathematics, Faculty of Science, Kyoto University, Kyoto 606-8502, Japan}\par\nopagebreak
  \textit{E-mail address}: \texttt{imaike.dai.22s@st.kyoto-u.ac.jp}

}}
\newcommand{\ax}{c_1(\overline{T}{\xss})}
\newcommand{\ay}{c_2(\overline{T}{\xss})}
\newcommand{\az}{c_1(\overline{N}_{\mathscr{X}^{\iota}/ \mathscr{X}})}
\newcommand{\aw}{c_2(\overline{N}_{\mathscr{X}^{\iota}/ \mathscr{X}})}
\newcommand{\au}{c_1(\overline{T}{\xs})|_{\mathscr{X}^{\iota}}}
\newcommand{\av}{c_2(\overline{T}{\xs})|_{\mathscr{X}^{\iota}}}
\begin{document} 

\title{Analytic torsion for irreducible holomorphic symplectic fourfolds with involution, I: Construction of an invariant}
\author{Dai Imaike}
\date{}

\maketitle

\numberwithin{equation}{section}

\begin{abstract}
	In this paper, we construct an invariant for irreducible holomorphic symplectic manifolds of $K3^{[2]}$-type with antisymplectic involution by using the equivariant analytic torsion.
	Moreover, we give a formula for the complex Hessian of the logarithm of the invariant.
\end{abstract}

\tableofcontents

\setcounter{section}{-1}

\section{Introduction}\label{s-0}

	The notion of analytic torsion for complex manifolds was introduced by Ray-Singer \cite{MR383463}.
	It is the exponential of the derivative at zero of the weighted alternating sum of various spectral zeta functions.
	In \cite{MR783704}, Quillen defined a metric on the determinant of cohomologies by using analytic torsion.
	He introduced the product of the $L^2$-metric on the determinant of cohomologies and the analytic torsion,
	and this metric is called the Quillen metric.
	One of the remarkable properties of the Quillen metric is that it is a $C^{\infty}$ metric on the determinant of cohomologies even if the dimension of the cohomologies jumps. 
	Bismut-Gillet-Soul\'e \cite{MR929146} \cite{MR929147} \cite{MR931666} calculated the curvature of the determinant of cohomologies endowed with the Quillen metric 
	for locally K\"ahler proper holomorphic submersions.
	In particular, they established the Grothendieck-Riemann-Roch theorem at the level of differential forms at bidegree $(1, 1)$.
	Gillet-Soul\'e \cite{MR1189489} proved the arithmetic Riemann-Roch theorem in Arakelov geometry, where the determinant of cohomologies are endowed with the Quillen metric.
	
	In theoretical physics, Bershadsky-Cecotti-Ooguri-Vafa \cite{MR1301851} introduced a weighted alternating product of various analytic torsions for Calabi-Yau manifolds.
	This special combination of analytic torsions is called the BCOV torsion.
	One of their predictions is an equivalence of the BCOV torsion in B-model and the genus one Gromov-Witten invariants in the corresponding A-model.
	In mathematics, Fang-Lu-Yoshikawa \cite{MR2454893} constructed the BCOV invariant of Calabi-Yau threefolds, which could be viewed as a normalization of the BCOV torsion.
	Eriksson-Freixas i Montplet-Mourougane \cite{MR4255041}, \cite{MR4475251} constructed the BCOV invariant of Calabi-Yau manifolds of arbitrary dimension
	and established mirror symmetries at genus one for the pair of Calabi-Yau manifolds consisting of the projective hypersurface and its mirror family.
	Fu-Zhang \cite{MR4549963} constructed the BCOV invariant of Calabi-Yau varieties with canonical singularities and proved its birational invariance.
	
	In \cite{MR1316553}, Bismut studied analytic torsion in the equivariant setting and obtained the embedding formula and anomaly for equivariant Quillen metrics.
	In \cite{MR1800127}, Ma proved the curvature formula for equivariant Quillen metrics.
	K\"ohler and R\"ossler \cite{MR1872550} proved the fixed point formula of Lefschetz type in Arakelov geometry, which is an equivariant analog of the arithmetic Riemann-Roch theorem in \cite{MR1189489}.
	
	Yoshikawa \cite{MR2047658} constructed an invariant of 2-elementary K3 surfaces by using equivariant analytic torsion.
	He proved that for each deformation type the invariant is expressed as the Petersson norm of a certain automorphic form on a bounded symmetric domain of type IV and a certain Siegel modular form.
	Furthermore, it gives the BCOV invariants of Borcea-Voisin manifolds and log-Enriques surfaces \cite{MR3821932}, \cite{MR4476107}.
	
	The goal of my research is to generalize this result of Yoshikawa \cite{MR2047658} to a class of higher dimensional manifolds. 
	In this paper, we construct an invariant of irreducible holomorphic symplectic manifolds of $K3^{[2]}$-type with antisymplectic involution by using the equivariant analytic torsion of the holomorphic cotangent bundle. 
	Furthermore, we provide a variational formula for the invariant.
	Let us explain our results in more details.

	\bigskip
	
	A simply-connected compact \K manifold $X$ is an irreducible holomorphic symplectic manifold if there exists an everywhere non-degenerate holomorphic 2-form $\eta$ on $X$ such that $H^0(X, \Omega^2_X)$ is generated by $\eta$.
	The dimension of an irreducible holomorphic symplectic manifold is even.
	A 2-dimensional irreducible holomorphic symplectic manifold is a K3 surface.
	Beauville (\cite{MR730926}) proved that the Hilbert scheme of length $n$ zero-dimensional subschemes of a K3 surface is a $2n$-dimensional irreducible holomorphic symplectic manifold.  
	An irreducible holomorphic symplectic manifold is of $K3^{[n]}$-type if it is deformation equivalent to the Hilbert scheme of $n$-points of a K3 surface.
	An involution $\iota : X \to X$ is antisymplectic if $\iota^* \eta = -\eta$.
	
	Let $X$ be a manifold of $K3^{[2]}$-type and let $\iota : X \to X$ be an antisymplectic involution.
	The cohomology $H^2(X, \mathbb{Z})$ is equipped with an integral symmetric non-degenerate quadric form $q_X$, called the Beauville-Bogomolov-Fujiki form.
	Then $(H^2(X, \mathbb{Z}), q_X)$ is isomorphic to $L_2 = L_{K3} \oplus \mathbb{Z}e$ as lattices, where $L_{K3}$ is the K3 lattice and $\mathbb{Z}e$ is a rank $1$ lattice generated by $e$ with $e^2=-2$.
	An isometry $\alpha : H^2(X, \mathbb{Z}) \to L_2$ is called a marking.
	
	Let $\mon(L_2)$ be the subgroup of the isometry group $O(L_2)$ defined by $\mon(L_2) = \alpha \circ \mon(X) \circ \alpha^{-1}$,
	where $(X, \alpha)$ is a marked manifold of $K3^{[2]}$-type and $\mon(X)$ is the monodromy group of $X$ (\cite{MR2964480}).
	The group $\mon(L_2)$ is independent of the choice of $(X,\alpha)$.
	An admissible sublattice of $L_2$ is the pair of a hyperbolic sublattice $M$ of $L_2$ and an involution $\iota_M \in \mon(L_2)$ such that the invariant subspace of $\iota_M$ coincides with $M$.
	Set
	$$
		\tilde{\mathcal{C}}_M =\{ x \in M_{\mathbb{R}} ; x^2>0 \} \text{ and } \Delta(M)= \{ \delta \in M; \delta^2=-2, \text{ or } \delta^2=-10, (\delta, L_2)=2\mathbb{Z} \}.
	$$ 
	A K\"ahler-type chamber is a connected component of the set $\tilde{\mathcal{C}}_M \setminus \cup_{\delta \in \Delta(M)} \delta^{\perp}$.
	Joumaah (\cite{MR3519981}) showed that the deformation types of manifolds of $K3^{[2]}$-type with antisymplectic involution are classified by the admissible sublattices and the K\"ahler-type chambers. 
	
	Let $(M, \iota_M)$ be an admissible sublattice and let $\mathcal{K}$ be a K\"ahler-type chamber.
	Let $(X, \iota)$ be a manifold of $K3^{[2]}$-type with antisymplectic involution.
	We assume that there is an isometry $\alpha : H^2(X, \mathbb{Z}) \to L_2$ such that $\iota_M \circ \alpha = \alpha \circ \iota^*$ and $\alpha(\mathcal{K}^{\iota}_X) \subset \mathcal{K}$, where $\mathcal{K}^{\iota}_X$ is an invariant K\"ahler cone of $(X, \iota)$.
	We call such a pair $(X, \iota)$ a manifold of $K3^{[2]}$-type with antisymplectic involution of type $(M, \mathcal{K})$.
	
	Let $\omega_X$ be an $\iota$-invariant \K form on $X$.
	The volume of $(X, \omega_X)$ is defined by $\vol(X, \omega_X)= \int_X \frac{\omega_X^4}{4!}$.
	We denote the fixed locus of $\iota : X \to X$ by $X^{\iota}$.
	This is a possibly disconnected smooth complex surface.
	If $X^{\iota}=\sqcup_{i} Z_i$ is the decomposition of $X^{\iota}$ into the connected components, 
	then the volume of $(X^{\iota}, \omega_X|_{X^{\iota}})$ is defined by 
	$\vol(X^{\iota}, \omega_X|_{X^{\iota}}) =\prod_i \int_{Z_i} \frac{\omega_X|_{Z_i}^2}{2!}$.
	We denote the covolume of the lattice $\operatorname{Im} \left( H^1(X^{\iota}, \mathbb{Z}) \to H^1(X^{\iota}, \mathbb{R}) \right)$ in the cohomology $H^1(X^{\iota}, \mathbb{R})$ by $\vol_{L^2}\left( H^1(X^{\iota}, \mathbb{Z}), \omega_X|_{X^{\iota}} \right)$.
	
	We define the real-valued smooth function $\varphi$ on $X^{\iota}$ by
	$$
		\varphi=\frac{|| \eta^2 ||_{L^2}^2}{\eta^2 \wedge \bar{\eta}^2}\frac{\omega_X^4/4!}{\vol(X, \omega_X)}.
	$$
	This is independent of the choice of $\eta$.
	The positive real number $A(X, \iota, \omega_X)$ is defined by
	$$
		A(X, \iota, \omega_X) = \exp \left[ \int_{X^{\iota}} (\log \varphi) \Omega \right],
	$$
	where $\Omega$ is the characteristic form on $X^{\iota}$ defined by
	$$
		\Omega = c_1(TX^{\iota}, h_{X^{\iota}})^2 -8c_2(TX^{\iota}, h_{X^{\iota}}) -c_1(TX, h_X)|^2_{X^{\iota}} +3c_2(TX, h_X)|_{X^{\iota}}.
	$$
	
	The $\iota$-invariant \K form $\omega_X$ on $X$ induces the hermitian metric $h_X$ on the holomorphic cotangent bundle $\Omega^1_X$,
	and we set $\bar{\Omega}^1_X = (\Omega^1_X, h_X)$.
	The equivariant analytic torsion of $\bar{\Omega}^1_X$ is denoted by $\tau_{\iota}(\bar{\Omega}^1_X)$.
	We denote by $\bar{\mathcal{O}}_{X^{\iota}}$ the trivial line bundle $\mathcal{O}_{X^{\iota}}$ equipped with the canonical metric.
	The analytic torsion of $\bar{\mathcal{O}}_{X^{\iota}}$ is denoted by $\tau(\bar{\mathcal{O}}_{X^{\iota}})$.
	
	Set $t = \operatorname{Tr}(\iota_M)+2$.
	We define a real number $\tau_{M, \mathcal{K}}(X, \iota)$ by
	\begin{align*}
		\tau_{M, \mathcal{K}}(X, \iota)=\tau_{\iota}(\bar{\Omega}_X^1) &\vol(X, \omega_{X})^{\frac{(t-1)(t-7)}{16}} A(X, \iota, h_X) \\
		&\cdot \tau(\bar{\mathcal{O}}_{X^{\iota}})^{-2} \vol(X^{\iota}, \omega_{X^{\iota}})^{-2} \vol_{L^2}(H^1(X^{\iota}, \mathbb{Z}), \omega_{X^{\iota}}).
	\end{align*}
	As an application of the curvature formula for Quillen metrics \cite{MR929146}, \cite{MR1800127}, we have the following.
	
	\begin{thm}\label{t-0-1}
		The real number $\tau_{M, \mathcal{K}}(X, \iota)$ is independent of the choice of an $\iota$-invariant \K form.
		In particular, $\tau_{M, \mathcal{K}}(X, \iota)$ is an invariant of $(X, \iota)$.
	\end{thm}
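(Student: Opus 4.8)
The plan is to show that the logarithm of $\tau_{M,\mathcal K}(X,\iota)$ has vanishing first variation (equivalently, vanishing complex Hessian would be overkill here; what is needed is that its differential is zero) as $\omega_X$ moves in the space of $\iota$-invariant \K forms, and then invoke connectedness of that space. So first I would set up a smooth family: fix $(X,\iota)$ and let $\omega_X = \omega_X(s)$ depend smoothly on a parameter $s$ in a ball, inducing a metric $h_X(s)$ on $\Omega^1_X$; all the ingredients $\tau_\iota(\bar\Omega^1_X)$, $\vol(X,\omega_X)$, $A(X,\iota,h_X)$, $\tau(\bar{\mathcal O}_{X^\iota})$, $\vol(X^\iota,\omega_{X^\iota})$, $\vol_{L^2}(H^1(X^\iota,\mathbb Z),\omega_{X^\iota})$ become smooth positive functions of $s$. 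The goal is $d_s \log \tau_{M,\mathcal K}(X,\iota) = 0$.

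The key input is the anomaly/curvature formula of Bismut–Gillet–Soulé and Ma for equivariant Quillen metrics, applied to the holomorphic cotangent bundle $\Omega^1_X$ with its involution $\iota$. This computes the variation of $\log\tau_\iota(\bar\Omega^1_X)$ (together with the variation of the equivariant $L^2$-metric on the determinant line) in terms of integrals over the fixed locus $X^\iota$ of the appropriate equivariant Bott–Chern/characteristic classes built from the curvatures of $TX$ (restricted to $X^\iota$), the normal bundle $N_{X^\iota/X}$, and $TX^\iota$. For a manifold of $K3^{[2]}$-type, $H^{p,q}(X)$ is known (it has the Hodge numbers of $K3^{[2]}$), and under the antisymplectic involution the cohomology splits into $\pm$-eigenspaces; combined with the fact that the holomorphic $L^2$-metric on $\det H^\bullet(X,\Omega^1_X)$ is, up to the explicitly chosen powers of $\vol(X,\omega_X)$, determined by the period data which is held fixed, the variation of the $L^2$-part is absorbed into the volume factor. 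So the next step is to match the chosen exponent $\tfrac{(t-1)(t-7)}{16}$ of $\vol(X,\omega_X)$ and the explicit combination $\tau(\bar{\mathcal O}_{X^\iota})^{-2}\vol(X^\iota,\omega_{X^\iota})^{-2}\vol_{L^2}(\cdots)$ against exactly the fixed-locus integral produced by the curvature formula.

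Concretely I would: (i) differentiate $\log$ of each factor in $s$; (ii) for $\tau_\iota(\bar\Omega^1_X)$ use the equivariant curvature formula to write its variation plus the variation of the equivariant Quillen $L^2$-metric as $\int_{X^\iota}$ of a Bott–Chern form; (iii) recognize that the non-fixed-locus contribution to the variation of the equivariant $L^2$-metric on $\det H^\bullet(X,\Omega^1_X)$ equals (a multiple of) $d_s\log\vol(X,\omega_X)$ because $H^\bullet(X,\Omega^1_X) \cong H^{1,\bullet}(X)$ carries a Hodge structure whose $L^2$-metric depends on $\omega_X$ only through its total volume (the relevant periods being locked by the type $(M,\mathcal K)$), which pins down the exponent $\tfrac{(t-1)(t-7)}{16}$ via $b_2, b_4$ of $K3^{[2]}$ and the trace $t$ of $\iota_M$; (iv) for the analytic torsion $\tau(\bar{\mathcal O}_{X^\iota})$ on the surface $X^\iota$ apply the BGS curvature/anomaly formula again (in the non-equivariant case), which says its variation plus that of the $L^2$-metric on $\det H^\bullet(X^\iota,\mathcal O)$ — the latter governed precisely by $\vol(X^\iota,\omega_{X^\iota})$ (from $H^0$ and $H^2$) and $\vol_{L^2}(H^1(X^\iota,\mathbb Z),\omega_{X^\iota})$ (from $H^1$) — is $\int_{X^\iota}$ of a Bott–Chern form in $c_1(TX^\iota),c_2(TX^\iota)$; (v) show all these Bott–Chern integrals cancel, using that the characteristic form $\Omega$ in the definition of $A(X,\iota,h_X)$ is chosen to be exactly the $(2,2)$-form $c_1(TX^\iota)^2 - 8c_2(TX^\iota) - c_1(TX)^2|_{X^\iota} + 3c_2(TX)|_{X^\iota}$ appearing as the top-degree part of the equivariant $\widehat{\mathrm{Td}}$-type expression dictated by the fixed-point formula, so that $d_s\log A$ supplies the remaining Bott–Chern term with the correct sign. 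Finally, with the total differential shown to vanish and the space of $\iota$-invariant \K forms being connected (it is convex, or at least path-connected), conclude that $\tau_{M,\mathcal K}(X,\iota)$ is independent of $\omega_X$, hence an invariant of $(X,\iota)$.

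The main obstacle I expect is the precise bookkeeping in steps (iii)--(v): correctly identifying the contribution of the equivariant $L^2$-metric on $\det H^\bullet(X,\Omega^1_X)$ — in particular verifying that the $\iota$-anti-invariant part of $H^{1,1}(X)$ and the behavior of $H^{1,0}=H^{1,3}=0$, $H^{1,2}$ under $\iota$ produce a variation proportional to $d_s\log\vol(X,\omega_X)$ with exactly the combinatorial coefficient $\tfrac{(t-1)(t-7)}{16}$ in terms of $t=\operatorname{Tr}(\iota_M)+2$ — and matching the fixed-locus Bott–Chern contributions from the equivariant cotangent-bundle torsion against those from the $\mathcal O_{X^\iota}$-torsion and from $d_s\log A$. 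This is essentially a compatibility check between the Riemann–Roch–type characteristic class on $X^\iota$ coming out of the Bismut–Ma equivariant curvature formula for $\Omega^1_X$ and the hand-chosen form $\Omega$; getting the numerical constants ($-8$, $3$, the exponents $-2$, $-2$, $1$ and $\tfrac{(t-1)(t-7)}{16}$) to line up is where the real work lies, and it is where the specific structure of $K3^{[2]}$-type fourfolds (their Chern numbers and Hodge numbers) enters.
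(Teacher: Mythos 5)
Your overall strategy---control the metric dependence of every factor through the (equivariant) Quillen-metric machinery and arrange the fixed-locus form $\Omega$ and the exponents so that all contributions cancel---is the right circle of ideas, but as written the argument has a genuine gap at its logical core. The curvature formula of Bismut--Gillet--Soul\'e and Ma that you invoke computes $dd^c$ of the Quillen metric over the base of a holomorphic family; it does not give the first variation $d_s\log\tau_{M,\mathcal{K}}$ that your plan requires. To make ``the differential is zero'' precise you would need the equivariant \emph{anomaly} formula with explicit Bott--Chern secondary classes for each factor (the equivariant torsion of $\Omega^1_X$, the torsion of $\mathcal{O}_{X^{\iota}}$, the $L^2$-volumes, and $A(X,\iota,h_X)$), and you never carry this out---indeed you flag it as ``where the real work lies''. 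The paper avoids this entirely: given two $\iota$-invariant metrics it interpolates them over $\mathbb{P}^1$ with constant complex structure, applies the family formula of Theorem \ref{p-3-4}, notes that the right-hand side vanishes because $c_1(f_*K_{\mathscr{X}/\mathbb{P}^1},h_{L^2})$ and $\omega_{H^{\cdot}}$ are zero when the complex structure does not move, and concludes that $\log\tau$ is pluriharmonic on the \emph{compact} base $\mathbb{P}^1$, hence constant. Your remark that the complex Hessian is ``overkill'' is backwards: on a parameter ball, $dd^c\log\tau=0$ would not suffice, and it is exactly the compactness of $\mathbb{P}^1$ that makes the Hessian information enough; conversely, the statement you aim for is strictly stronger than what the cited curvature formula delivers.

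There is also a substantive error in your step (iii), which is where the exponent $\tfrac{(t-1)(t-7)}{16}$ must come from. The $L^2$-metric on $H^{\bullet}(X,\Omega^1_X)$ does \emph{not} depend on $\omega_X$ only through $\vol(X,\omega_X)$: already on $H^{1,1}(X)$ it depends on the K\"ahler class through the Hodge--Riemann pairings, and your Hodge-number bookkeeping is garbled ($h^{1,3}(X)=21\neq 0$ and $h^{1,2}(X)=0$ for $K3^{[2]}$-type). The mechanism that actually works is the isomorphism $H^1(X,\Omega^1_X)_{\pm}\otimes H^2(X,\mathcal{O}_X)\cong H^3(X,\Omega^1_X)_{\mp}$ combined with the pointwise hyperk\"ahler identity of Lemma \ref{l-3-1}, applied after replacing $\omega_X$ by the Ricci-flat representative of its class (legitimate because the $L^2$-metric on harmonic forms depends only on the class): in the alternating sum over $q=1,3$ the class-dependent parts cancel and only the volume and $c_1(f_*K_{\mathscr{X}/S},h_{L^2})$ survive (Lemma \ref{l-3-4}, Proposition \ref{p-3-2}). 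Likewise the matching of $\Omega$ with the degree-$(3,3)$ part of $Td_{\iota}\,ch_{\iota}$ and the evaluation $\int_{X^{\iota}}\Omega=-3(t^2+7)$ (Lemmas \ref{l-3-3} and \ref{l-3-1-A}) are exactly the computations you defer. Without these, the cancellations in your steps (iii)--(v) are assertions rather than proofs, so the proposal is a plausible plan but not yet a proof.
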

	
	Let $\tilde{\mathcal{M}}_{M, \mathcal{K}}$ be the set of isomorphism classes of $K3^{[2]}$-type manifolds with antisymplectic involution of type $(M, \mathcal{K})$.
	Joumaah (\cite{MR3519981}) constructed an orthogonal modular variety $\mathcal{M}_{M, \mathcal{K}}$, a reduced divisor $\bar{\mathscr{D}}_{M^{\perp}}$ on $\mathcal{M}_{M, \mathcal{K}}$, and the period map $P_{M, \mathcal{K}} : \tilde{\mathcal{M}}_{M, \mathcal{K}} \to \mathcal{M}_{M, \mathcal{K}}$
	such that $P_{M, \mathcal{K}} \left( \tilde{\mathcal{M}}_{M, \mathcal{K}} \right)= \mathcal{M}_{M, \mathcal{K}} \setminus \bar{\mathscr{D}}_{M^{\perp}}$. 
	We set $\mathcal{M}^{\circ}_{M, \mathcal{K}} = \mathcal{M}_{M, \mathcal{K}} \setminus \bar{\mathscr{D}}_{M^{\perp}}$.
	By Theorem \ref{t-0-1} and Joumaah's theorem \cite{MR3519981}, $\tau_{M, \mathcal{K}}$ is viewed as a smooth real-valued function on $\mathcal{M}^{\circ}_{M, \mathcal{K}}$.
	Namely,
	$$
		\tau_{M,\mathcal{K}}(p) = \tau_{M,\mathcal{K}}(X, \iota) \quad ((X, \iota) \in P_{M, \mathcal{K}}^{-1}(p) )
	$$
	is independent of the choice of $(X, \iota) \in P_{M, \mathcal{K}}^{-1}(p)$.
	
	Let $\omega_{\mathcal{M}_{M, \mathcal{K}}}$ be the orbifold \K form on $\mathcal{M}_{M, \mathcal{K}}$ induced from the \K form of the Bergman metric on the period domain
	$$
		\Omega_{M^{\perp}} = \{ [\eta] \in \mathbb{P}(M^{\perp}_{\mathbb{C}}) ; (\eta, \eta)=0, (\eta, \bar{\eta}) >0 \}.
	$$
	
	In Lemma \ref{l6-3-1}, we will prove the existence of a smooth $(1,1)$-form $\sigma_{M, \mathcal{K}}$ on $\mathcal{M}^{\circ}_{M, \mathcal{K}}$ such that for any $(X, \iota) \in \tilde{\mathcal{M}}_{M, \mathcal{K}}$ we have
	$$
		P_{M, \mathcal{K}}^*\sigma_{M, \mathcal{K}} = c_1(\pi_*\Omega^1_{\XX / \operatorname{Def}(X, \iota)}, h_{L^2}) -c_1(R^1\pi_*\mathcal{O}_{\mathscr{X}^{\iota}}, h_{L^2}) -2c_1(\pi_*K_{\XX / \operatorname{Def}(X, \iota)}, h_{L^2}),
	$$ 
	where $P_{M, \mathcal{K}} : \operatorname{Def}(X, \iota) \to \mathcal{M}_{M, \mathcal{K}}$ is the period map of the Kuranishi family $\pi : (\X, \iota) \to \operatorname{Def}(X, \iota)$ of $(X, \iota)$.
	
	\begin{thm}\label{t-0-A}
		The following equation of differential forms on $\mathcal{M}^{\circ}_{M, \mathcal{K}}$ holds:
		$$
			-dd^c \log \tau_{M, \mathcal{K}} = \frac{(t+1)(t+7)}{8} \omega_{\mathcal{M}_{M, \mathcal{K}}} +\sigma_{ M, \mathcal{K} }.
		$$
	\end{thm}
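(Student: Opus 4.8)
\medskip
\noindent\textbf{Plan of proof.}
The plan is to compute $-dd^c\log\tau_{M,\mathcal{K}}$ on the base $S:=\operatorname{Def}(X,\iota)$ of the Kuranishi family $\pi\colon(\X,\iota)\to S$ of $(X,\iota)$, equipped with an $\iota$-invariant relative \K form, and then to descend the result. Since $P_{M,\mathcal{K}}\colon S\to\mathcal{M}_{M,\mathcal{K}}$ is a local isomorphism onto a neighbourhood of its image in $\mathcal{M}^{\circ}_{M,\mathcal{K}}$ and, by Theorem~\ref{t-0-1}, $\tau_{M,\mathcal{K}}$ is an invariant of $(X,\iota)$, the $(1,1)$-form obtained on $S$ is automatically the pullback of a well-defined form on $\mathcal{M}^{\circ}_{M,\mathcal{K}}$, independent of the chosen relative \K form. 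Over $S$ each factor of $\tau_{M,\mathcal{K}}$ is a genuine smooth positive function or Hermitian metric, so the computation amounts to differentiating the factors one at a time and reassembling.

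The main input is the curvature formula for equivariant Quillen metrics of Bismut--Gillet--Soul\'e and Ma \cite{MR929146}, \cite{MR1800127}. Endowing $\Omega^1_{\xs}$ with the metric induced by the relative \K form, the equivariant analytic torsion $\tau_{\iota}(\bar\Omega^1_X)$ together with the $L^2$-metric on the $\iota$-equivariant determinant of cohomology $\lambda(\Omega^1_{\xs})$ forms the equivariant Quillen metric, whose first Chern form is the degree-$(1,1)$ part of the fibre integral $\int_{\xss}\operatorname{Td}_{\iota}(\overline{T}{\xs})\operatorname{ch}_{\iota}(\overline{\Omega}^1_{\xs})$ of the localized equivariant characteristic form along the relative fixed locus $\mathscr{X}^{\iota}\to S$; comparing with $c_1(\lambda(\Omega^1_{\xs}),h_{L^2})$ gives a formula for $-dd^c\log\tau_{\iota}(\bar\Omega^1_X)$. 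Here I use the Hodge numbers of $K3^{[2]}$-type manifolds ($h^{1,0}=h^{1,2}=h^{1,4}=0$, $h^{1,1}=h^{1,3}=21$), so that $R^q\pi_*\Omega^1_{\xs}$ is nonzero only for $q=1,3$, and the holomorphic symplectic form together with Serre duality to relate $\det R^3\pi_*\Omega^1_{\xs}$ to $\det R^1\pi_*\Omega^1_{\xs}$, twisted by powers of the Hodge line bundle $\mathcal{L}:=\pi_*\Omega^2_{\xs}$ (fibre $\mathbb{C}\eta$, with $\|\eta\|_{L^2}^2=q_X(\eta,\bar\eta)$ up to a power of $\vol(X,\omega_X)$ by the Fujiki relation) and of $\pi_*K_{\xs}\cong\mathcal{L}^{\otimes2}$ (with $\|\eta^2\|_{L^2}^2$ the very quantity occurring in $\varphi$, differing from $h_{L^2}^{\otimes2}$ on $\mathcal{L}^{\otimes2}$ by a factor governed by $\varphi$).

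Next come the fixed-locus factors. Since $\iota$ is antisymplectic, $\eta|_{X^{\iota}}=0$, so $X^{\iota}$ is Lagrangian and contraction with $\eta$ gives an isomorphism of holomorphic vector bundles $N_{\mathscr{X}^{\iota}/\mathscr{X}}\cong\Omega^1_{\mathscr{X}^{\iota}/S}$; via the $\iota$-equivariant splitting $\Omega^1_{\xs}|_{\mathscr{X}^{\iota}}\cong\Omega^1_{\mathscr{X}^{\iota}/S}\oplus N_{\mathscr{X}^{\iota}/\mathscr{X}}^{\vee}$ this lets me express the normal-bundle classes $\az,\aw$ (and the restrictions $\au,\av$) entering the equivariant integral through $\ax,\ay$, hence, by Grothendieck--Riemann--Roch for the smooth surface family $\mathscr{X}^{\iota}\to S$, through the Hodge bundles $\pi_*\Omega^1_{\mathscr{X}^{\iota}/S}$, $\pi_*K_{\mathscr{X}^{\iota}/S}$, $R^1\pi_*\mathcal{O}_{\mathscr{X}^{\iota}}$, up to a Bott--Chern term measuring the failure of the above isomorphism to be an isometry. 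The factors $\tau(\bar{\mathcal{O}}_{X^{\iota}})^{-2}\,\vol(X^{\iota},\omega_{X^{\iota}})^{-2}\,\vol_{L^2}(H^1(X^{\iota},\mathbb{Z}),\omega_{X^{\iota}})$ I handle by the ordinary Quillen curvature formula \cite{MR929146} applied to $\bar{\mathcal{O}}_{\mathscr{X}^{\iota}}$ over $S$, using that its determinant of cohomology is built from $R^0\pi_*\mathcal{O}_{\mathscr{X}^{\iota}}$ (whose constant section has $L^2$-norm $\vol(X^{\iota},\omega_{X^{\iota}})$, absorbed by $\vol(X^{\iota},\omega_{X^{\iota}})^{-2}$), from $R^1\pi_*\mathcal{O}_{\mathscr{X}^{\iota}}$ (whose $L^2$-metric is traded for the Betti $\mathbb{Z}$-structure by $\vol_{L^2}(H^1(X^{\iota},\mathbb{Z}),\omega_{X^{\iota}})$), and from $R^2\pi_*\mathcal{O}_{\mathscr{X}^{\iota}}\cong(\pi_*K_{\mathscr{X}^{\iota}/S})^{\vee}$; these are arranged precisely so that their joint contribution to $-dd^c\log\tau_{M,\mathcal{K}}$ combines with the surface part of the equivariant integral into $P_{M,\mathcal{K}}^*\sigma_{M,\mathcal{K}}$. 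Finally, differentiating $A(X,\iota,h_X)=\exp\int_{X^{\iota}}(\log\varphi)\,\Omega$ along $S$ produces — precisely because $\Omega=c_1(TX^{\iota})^2-8c_2(TX^{\iota})-c_1(TX)|_{X^{\iota}}^2+3c_2(TX)|_{X^{\iota}}$ is the relevant characteristic combination — exactly the $\log\varphi$-term of the equivariant curvature integral together with the Bott--Chern correction above, so that after these cancellations only topological terms and terms pulled back from moduli remain.

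It remains to reassemble. Adding all contributions, the terms depending on $\vol(X,\omega_X)$ and $\vol(X^{\iota},\omega_{X^{\iota}})$ cancel against the prescribed volume power $\tfrac{(t-1)(t-7)}{16}$ — a cancellation forced by Theorem~\ref{t-0-1}, since the left-hand side descends to $\mathcal{M}^{\circ}_{M,\mathcal{K}}$ and thus cannot depend on the \K form — leaving a numerical multiple of $c_1(\mathcal{L},h_{L^2})$ plus the combination $c_1(\pi_*\Omega^1_{\mathscr{X}^{\iota}/S},h_{L^2})-c_1(R^1\pi_*\mathcal{O}_{\mathscr{X}^{\iota}},h_{L^2})-2c_1(\pi_*K_{\mathscr{X}^{\iota}/S},h_{L^2})$. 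Via $P_{M,\mathcal{K}}$, $-c_1(\mathcal{L},h_{L^2})$ pulls back $\omega_{\mathcal{M}_{M,\mathcal{K}}}$ (the Hodge line bundle over $\Omega_{M^{\perp}}$ with its $L^2$-metric has curvature $-\omega_{\mathcal{M}_{M,\mathcal{K}}}$, up to the normalisation of the Bergman metric and modulo the volume terms already accounted for), and the remaining combination is $P_{M,\mathcal{K}}^*\sigma_{M,\mathcal{K}}$ by Lemma~\ref{l6-3-1}; tracking the numerical coefficient yields $\tfrac{(t+1)(t+7)}{8}$. I expect the principal obstacle to be exactly this final step: extracting the degree-$(1,1)$ component of $\int_{\xss}\operatorname{Td}_{\iota}(\overline{T}{\xs})\operatorname{ch}_{\iota}(\overline{\Omega}^1_{\xs})$ explicitly in terms of $\ax,\ay,\az,\aw,\au,\av$ and $c_1(\mathcal{L})$, using $N_{\mathscr{X}^{\iota}/\mathscr{X}}\cong\Omega^1_{\mathscr{X}^{\iota}/S}$ and Grothendieck--Riemann--Roch to rewrite everything through the named Hodge bundles, and verifying that the numerology collapses cleanly to $\tfrac{(t+1)(t+7)}{8}\,\omega_{\mathcal{M}_{M,\mathcal{K}}}+\sigma_{M,\mathcal{K}}$ with no residual terms; the equivariant localization on the possibly disconnected fixed surface, and the careful matching of normalization conventions between the equivariant and non-equivariant curvature formulas, are where most of the work lies.
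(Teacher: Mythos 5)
Your plan follows essentially the same route as the paper: the equivariant Quillen curvature formula for $\bar{\Omega}^1$ over the Kuranishi family, the ordinary Quillen formula for $\bar{\mathcal{O}}$ on the fixed surface family, cancellation of the $\log\varphi$ and volume terms against $A$ and the prescribed volume power, and descent via Lemma \ref{l6-3-1} together with the identification of the Hodge-bundle curvature with the Bergman form (this is exactly the content of Theorem \ref{p-3-4} combined with the short argument in the paper's proof of Theorem \ref{t5-3-1}). The only slip is a sign in your final assembly: with the paper's conventions $c_1(\pi_*K_{\mathscr{X}/\operatorname{Def}(X,\iota)},h_{L^2})=+2\,\omega_{\Omega^+_{M^{\perp}}}$, i.e.\ the Hodge line bundle with its $L^2$-metric has \emph{positive} curvature along the period domain, so it is $+c_1$ (not $-c_1(\mathcal{L},h_{L^2})$) that pulls back the Bergman form, and this is what yields the coefficient $\tfrac{(t+1)(t+7)}{8}$ with the stated sign.
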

	
	There is an application of this invariant to families of $K3^{[2]}$-type manifolds with involution.
	
	\begin{thm}\label{t4-0-1}
		Suppose that $t \neq -7, -1$ and $q(X^{\iota})=p_g(X^{\iota})=0$ for each $(X, \iota) \in \tilde{\mathcal{M}}_{M, \mathcal{K}}$.
		There exists no irreducible projective curve on $\mathcal{M}^{\circ}_{M, \mathcal{K}}$.
		In particular, if $f : (\mathscr{X}, \iota) \to S$ is a family of $K3^{[2]}$-type manifolds with antisymplectic involution of type $(M, \mathcal{K})$,
		and if $S$ is compact, then $f$ is isotrivial.
		Namely any two fibers of $f$ are isomorphic.
	\end{thm}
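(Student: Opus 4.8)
The plan is to substitute the vanishing hypotheses into the variational formula of Theorem~\ref{t-0-A} and then run a Stokes-type argument on a hypothetical compact curve.

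First I would show that $\sigma_{M,\mathcal{K}}\equiv 0$ under the hypothesis $q(X^{\iota})=p_g(X^{\iota})=0$. By Lemma~\ref{l6-3-1}, over a Kuranishi family of any $(X,\iota)$ of type $(M,\mathcal{K})$ the form $P_{M,\mathcal{K}}^{*}\sigma_{M,\mathcal{K}}$ is a $\mathbb{Z}$-linear combination of the first Chern forms of $\pi_{*}\Omega^{1}_{\mathscr{X}^{\iota}/\operatorname{Def}(X,\iota)}$, $R^{1}\pi_{*}\mathcal{O}_{\mathscr{X}^{\iota}}$ and $\pi_{*}K_{\mathscr{X}^{\iota}/\operatorname{Def}(X,\iota)}$, whose fibers are $H^{0}(X^{\iota},\Omega^{1}_{X^{\iota}})$, $H^{1}(X^{\iota},\mathcal{O}_{X^{\iota}})$ and $H^{0}(X^{\iota},K_{X^{\iota}})$, of dimensions $h^{1,0}(X^{\iota})$, $q(X^{\iota})$ and $p_g(X^{\iota})$. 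The hypothesis gives $q(X^{\iota})=p_g(X^{\iota})=0$, and Hodge symmetry on the compact \K surface $X^{\iota}$ gives $h^{1,0}(X^{\iota})=q(X^{\iota})=0$; hence all three fiber dimensions vanish identically, so by base change the three direct images are the zero sheaf and $P_{M,\mathcal{K}}^{*}\sigma_{M,\mathcal{K}}=0$. As the period maps of the Kuranishi families are local isomorphisms whose images cover $\mathcal{M}^{\circ}_{M,\mathcal{K}}$ (local Torelli), this forces $\sigma_{M,\mathcal{K}}\equiv0$, and Theorem~\ref{t-0-A} reduces to
$$
	-dd^{c}\log\tau_{M,\mathcal{K}}=\frac{(t+1)(t+7)}{8}\,\omega_{\mathcal{M}_{M,\mathcal{K}}}\qquad\text{on }\mathcal{M}^{\circ}_{M,\mathcal{K}}.
$$

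Next, suppose toward a contradiction that $C\subset\mathcal{M}^{\circ}_{M,\mathcal{K}}$ is an irreducible projective curve. Passing to the normalization of $C$, and, since $C$ may meet the orbifold locus of $\mathcal{M}_{M,\mathcal{K}}$, to a further finite cover on which the orbifold structure is resolved (for instance the one associated with a torsion-free finite-index subgroup of the relevant arithmetic group), I obtain a compact Riemann surface $\hat{C}$ with a non-constant holomorphic map $h\colon\hat{C}\to\mathcal{M}^{\circ}_{M,\mathcal{K}}$ with image $C$, along which $\log\tau_{M,\mathcal{K}}$ and $\omega_{\mathcal{M}_{M,\mathcal{K}}}$ pull back to a genuine smooth function and a genuine smooth $(1,1)$-form. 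Integrating the pullback of the displayed identity over $\hat{C}$: the left-hand side is $0$ by Stokes' theorem, since $dd^{c}$ of a global smooth function is exact; the right-hand side equals $\frac{(t+1)(t+7)}{8}\int_{\hat{C}}h^{*}\omega_{\mathcal{M}_{M,\mathcal{K}}}$, and $\int_{\hat{C}}h^{*}\omega_{\mathcal{M}_{M,\mathcal{K}}}>0$ because $\omega_{\mathcal{M}_{M,\mathcal{K}}}$ is the descent of the positive Bergman form and $h$ is non-constant. Hence $(t+1)(t+7)=0$, contradicting $t\neq-7,-1$. Therefore $\mathcal{M}^{\circ}_{M,\mathcal{K}}$ contains no irreducible projective curve.

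Finally, for the ``in particular'' clause, a family $f\colon(\mathscr{X},\iota)\to S$ of type $(M,\mathcal{K})$ over a compact base $S$ induces a holomorphic moduli map $g\colon S\to\mathcal{M}^{\circ}_{M,\mathcal{K}}$ via the period map (the coarse target absorbs the choice of marking). Since $\mathcal{M}_{M,\mathcal{K}}$ is quasi-projective by Baily--Borel and $S$ is compact, $g(S)$ is a projective subvariety contained in $\mathcal{M}^{\circ}_{M,\mathcal{K}}$; if $\dim g(S)\ge1$, a general linear section would be an irreducible projective curve inside $\mathcal{M}^{\circ}_{M,\mathcal{K}}$, which we have just excluded. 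Hence $g$ is constant, every fiber of $f$ has the same period point, and by the Torelli-type theorem of Joumaah \cite{MR3519981} all fibers are mutually isomorphic; i.e.\ $f$ is isotrivial. I expect the main work to lie in the first step---establishing $\sigma_{M,\mathcal{K}}\equiv0$---which hinges on the constancy of the three cohomology dimensions over the \emph{entire} moduli space and on base change to see that the direct images literally vanish, together with the descent along local Torelli; by comparison the Stokes/positivity argument is routine once the orbifold points are dealt with.
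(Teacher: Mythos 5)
Your proposal is correct and is essentially the paper's own argument: the hypotheses $q(X^{\iota})=p_g(X^{\iota})=0$ (plus Hodge symmetry) kill all three direct image sheaves, hence $\sigma_{M,\mathcal{K}}=0$, and Theorem \ref{t5-3-1} reduces to $-dd^c\log\tau_{M,\mathcal{K}}=\frac{(t+1)(t+7)}{8}\,\omega_{\mathcal{M}_{M,\mathcal{K}}}$, which is then shown to be incompatible with a compact curve; the only difference is that the paper gets the contradiction by noting $-\frac{8}{(t+1)(t+7)}\log\tau_{M,\mathcal{K}}$ is plurisubharmonic and applying the maximum principle on the curve, whereas you integrate over a normalization and finite cover (Stokes), a cosmetic variation. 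One small caveat in your last step: equality of period points does not by itself give isomorphic fibers, since Joumaah's Torelli theorem (Theorem \ref{l-2-1-A}) is only generically injective; the implication ``constant period map $\Rightarrow$ isotrivial'' (also left implicit in the paper) should instead be justified by local Torelli and the universality of the Kuranishi family, which make the classifying maps locally constant and the family locally trivial.
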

	
	In \cite{MR2047658}, Yoshikawa uses equivariant analytic torsion of trivial line bundle on a 2-elementary K3 surface.
	If we consider equivariant analytic torsion of trivial line bundle on a manifold of $K3^{[2]}$-type with antisymplectic involution, we can construct an invariant in the same way.
	The variation formula for this another invariant is trivial, unlike Theorem \ref{t-0-A}.
	In some special cases, it can be proved that this another invariant is constant and we cannot construct a nontrivial invariant by using the equivariant analytic torsion of trivial line bundle.
	For this reason, we consider equivariant analytic torsion of cotangent bundle instead of trivial line bundle.
	For more detail, see Remark \ref{r5-3-3}.
	
	This is the first of a series of three papers investigating equivariant analytic torsion for manifolds of $K3^{[2]}$-type with antisymplectic involution. 
	The second paper \cite{I2} analyzes the singular behavior of the invariant and shows that, in some special cases, it is expressed as the Petersson norm of a certain automorphic form on a bounded symmetric domain of type IV and a certain Siegel modular form. 
	The third paper \cite{I3} compares the invariant constructed in this first paper with the BCOV invariant of the Calabi-Yau fourfold obtained as a crepant resolution of the quotient of a $K3^{[2]}$-type manifold by the antisymplectic involution.

	\bigskip
	
	Acknowledgements.
	I am grateful to my supervisor Ken-Ichi Yoshikawa, who suggests this topic to me and provides me with a lot of ideas.
	This work was supported by JST, the establishment of university the creation of science technology innovation, Grant Number JPMJFS2123
	and by JSPS KAKENHI Grant Number 23KJ1249.

\section{Analytic torsion and its fundamental properties}\label{s-1}

\subsection{Equivariant analytic torsion}\label{ss-1-1}

	In this section, we recall equivariant analytic torsion for compact \K manifolds with holomorphic involution.
	
	Let $X$ be a compact complex manifold of dimension $n$,
	and let $\iota : X \to X$ be a holomorphic involution of $X$.
	Let $\G$ be the group generated by the order 2 element $\iota$.
	In what follows, we consider the $\mu_2$-action on $X$ induced by $\iota$.
	Let $h_X$ be an $\iota$-invariant \K metric on $X$.
	The \K form attached to $h_X$ is defined by
	$$
		\omega_X = \frac{i}{2} \sum_{j,k} h_X \left( \frac{\partial}{\partial z^j}, \frac{\partial}{\partial z^k} \right) dz^j \wedge d\bar{z}^k ,
	$$
	where $z^1, \dots, z^n$ is a system of local coordinates on $X$.
	The space of smooth $(p,q)$-forms on $X$ is denoted by $A^{p,q}(X)$.
	
	Let $E$ be a $\G$-equivariant holomorphic vector bundle on $X$,
	and $h_E$ a $\G$-invariant hermitian metric on $E$.
	The space of $E$-valued smooth $(p,q)$-forms on $X$ is denoted by $A^{p,q}(X, E)$ or $A^{p,q}(E)$.
	
	The metrics $h_X$ and $h_E$ induce a $\G$-invariant hermitian metric $h$ on the complex vector bundle $\wedge^{p,q}T^*X \otimes E$.
	The $L^2$-metric on $A^{p,q}(X, E)$ is defined by
	$$
		\langle \alpha, \beta \rangle_{L^2} = \int_X h(\alpha, \beta) \frac{\omega_X^n}{n !}, \quad \alpha, \beta \in A^{p,q}(X, E).
	$$
	The Dolbeault operator of $E$ is denoted by $\bar{\partial}_E : A^{p,q}(X, E) \to A^{p,q+1}(X, E)$,
	and its formal adjoint is denoted by $\bar{\partial}_E^* : A^{p,q}(X, E) \to A^{p,q-1}(X, E)$.
	We define the Laplacian $D_{p,q}^2$ acting on $A^{p,q}(X, E)$ by 
	$$
		D_{p,q}^2 = (\bar{\partial}_E + \bar{\partial}_E^*)^2 : A^{p,q}(X, E) \to A^{p,q}(X, E).
	$$
	We denote the spectrum of $D_{p,q}^2$ by $\sigma(D_{p,q}^2)$, and the eigenspace of $D_{p,q}^2$ associated with an eigenvalue $\lambda \in \sigma(D_{p,q}^2)$ by $E_{p,q}(\lambda)$.
	Note that $\sigma(D_{p,q}^2)$ is a discrete subset contained in $\mathbb{R}_{\geqq 0}$.
	Moreover $E_{p,q}(\lambda)$ is finite dimensional.
	
	\begin{dfn}\label{d-1-1}
		Let $g \in \G$. 
		The spectral zeta function is defined by 
		$$
			\zeta_{p,q,g}(s) = \sum_{\lambda \in \sigma(D_{p,q}^2) \setminus \{ 0 \}} \lambda^{-s} \operatorname{Tr} (g|_{E_{p,q}(\lambda)}) \quad (s \in \mathbb{C}, \operatorname{Re} s >n).
		$$
	\end{dfn}
	
	Note that $\zeta_{p,q,g}(s)$ converges absolutely on the domain $\operatorname{Re} s > n$
	and extends to a meromorphic function on $\mathbb{C}$ which is holomorphic at $s=0$.
	
	\begin{dfn}\label{d-1-2}
		Let $g \in \G$.
		The equivariant analytic torsion of $\overline{E} = (E, h_E)$ on $(X, \omega_X)$ is defined by
		$$
			\tau_g(\overline{E}) = \exp \left\{ - \sum_{q=0}^n (-1)^q q \zeta'_{0,q,g}(0) \right\} .
		$$
	\end{dfn}
	
	If $g=1$, it is the (usual) analytic torsion of $\overline{E}$ and is denoted by $\tau(\overline{E})$ instead of $\tau_1(\overline{E})$.
	
	We denote by $H^q(X, E)_{\pm}$ the $(\pm 1)$-eigenspace of $\iota^* : H^q(X, E) \to H^q(X, E)$.
	We set
	$$
		\lambda_{\pm}(E) = \bigotimes_{q \geqq 0} ( \det H^q(X, E)_{\pm} )^{(-1)^q}. 
	$$ 
	We define the equivariant determinant of the cohomologies of $E$ by
	$$
		\lambda_{\G}(E) = \lambda_{+}(E) \oplus \lambda_{-}(E). 
	$$
	
	By Hodge theory, we may identify $H^q(X, E)$ with the space of $E$-valued harmonic $(0, q)$-forms on $X$.
	The cohomology $H^q(X, E)$ is endowed with the $\G$-invariant hermitian metric induced from the $L^2$-metric on $A^{p,q}(X, E)$.
	It induces the hermitian metric $|| \cdot ||_{\lambda_{\pm}(E), L^2}$ on $\lambda_{\pm}(E)$.
	We define the equivariant metric on $\lambda_{\G}(E)$ by
	$$
		|| \alpha ||_{\lambda_{\G}(E), L^2}(\iota) = || \alpha_+ ||_{\lambda_{\pm}(E), L^2} \cdot || \alpha_- ||_{\lambda_{\pm}(E), L^2}^{-1} \quad ( \alpha=(\alpha_+, \alpha_-) \in \lambda_{\G}(E) ),
	$$
	and call it the equivariant $L^2$-metric on $\lambda_{\G}(E)$.
	We define the equivariant Quillen metric on $\lambda_{\G}(E)$ by
	$$
		|| \alpha ||^2_{\lambda_{\G}(E), Q}(\iota) = \tau_g(\overline{E}) || \alpha ||^2_{\lambda_{\G}(E), L^2}(\iota) .
	$$

\subsection{A fundamental property of equivariant analytic torsion}\label{ss-1-2}

	Let $\mathscr{X}$ and $S$ be complex manifolds of dimension $m+n$ and $m$, respectively.
	Let $\iota : \mathscr{X} \to \mathscr{X}$ be a holomorphic involution.
	Then $\iota$ induces a $\G$-action on $\mathscr{X}$.
	We consider the trivial $\G$-action on $S$.
	
	Let $f : (\mathscr{X}, \iota) \to S$ be a proper surjective $\G$-equivariant holomorphic submersion.
	Suppose that $f$ is locally K\"ahler.
	Namely, for each point $s \in S$ there is an open neighborhood $U$ of $s$ such that $f^{-1}(U)$ is K\"ahler.
	The fiber of $f$ is denoted by $X_s$ $(s \in S)$ or simply $X$.
	Since $f$ is $\G$-equivariant, the involution $\iota$ induces a holomorphic involution on each fiber $X_s$,
	which is denoted by $\iota_s$ or simply $\iota$. 
	
	Let $h_{\xs}$ be an $\iota$-invariant hermitian metric on the relative tangent bundle $T\xs$ which is fiberwise K\"ahler.
	Set $h_s = h_{\xs}|_{X_s}$ $(s \in S)$.
	This is an $\iota_s$-invariant \K metric on $X_s$.
	Its \K form is denoted by $\omega_s$
	and we set $\omega_{\xs}= \{ \omega_s \}_{s \in S}$.
	
	Let $\overline{E}=(E, h_E)$ be a $\G$-equivariant holomorphic hermitian vector bundle on $\X$.
	We assume that $R^qf_*E$ is a locally free sheaf for all $q \geqq 0$ and we may regard it as a holomorphic vector bundle on $S$.
	By Hodge theory, $R^qf_*E$ is equipped with the $\iota$-invariant hermitian metric.
	This is called the $L^2$-metric and denoted by $h_{L^2}$.
	
	Let $g \in \G$.
	We define a real-valued function on $S$ by
	$$
		\tau_g(\overline{E})(s) = \tau_g(\overline{E}|_{X_s} ) \quad (s \in S).
	$$
	
	Let $E_{\pm}$ be the $(\pm 1)$-eigenbundle of the $\G$-action on $E|_{\mathscr{X}^{\iota}}$,
	and the restriction of $h_E$ to $E_{\pm}$ is denoted by $h_{\pm}$.
	The curvature form of $(E_{\pm}, h_{\pm})$ is denoted by $R_{\pm}$.
	Recall that the equivariant Todd form and the equivariant Chern character form are differential forms on $\mathscr{X}^{\iota}$ defined by
	\begin{align}\label{al-1-A}
		Td_{\iota}(E, h_E) = Td \left(- \frac{R_+}{2\pi i} \right) \det \left( \frac{I}{I+ \exp({+\frac{R_-}{2\pi i}})} \right),
	\end{align}
	and
	\begin{align}\label{al-1-B}
		ch_{\iota}(E, h_E) = ch \left(- \frac{R_+}{2\pi i} \right) - ch \left(- \frac{R_-}{2\pi i} \right) ,
	\end{align}
	respectively.
	If $\alpha$ is a differential form, then $[\alpha]^{(p,q)}$ is the component of $\alpha$ of bidegree $(p,q)$.
	
	\begin{thm}\label{p-1-3}
		For each $g \in \G$, $\tau_g(\overline{E})$ is smooth on $S$.
		Moreover, it satisfies the following equation:
		$$
			-dd^c \log \tau_g(\overline{E}) +\sum_{q \geqq 0} (-1)^q [ch_g(R^qf_*E, h_{L^2})]^{(1,1)}
			=[f_* Td_g(T\xs, h_{\xs}) ch_g(\overline{E})]^{(1,1)}.
		$$
	\end{thm}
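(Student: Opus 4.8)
The plan is to obtain Theorem~\ref{p-1-3} from the curvature formula for Quillen metrics of Bismut--Gillet--Soulé \cite{MR929146} and its equivariant refinement due to Ma \cite{MR1800127} (building on Bismut \cite{MR1316553}), by unwinding Definition~\ref{d-1-2} and the construction of the equivariant Quillen metric on $\lambda_{\G}(E)$ from Section~\ref{ss-1-1}. Since $\G=\{1,\iota\}$, two cases arise. For $g=1$ one has $\tau_{1}(\overline{E})=\tau(\overline{E})$, $ch_{1}=ch$ and $Td_{1}=Td$, and the claimed equality is precisely the bidegree-$(1,1)$ component of the Grothendieck--Riemann--Roch theorem of \cite{MR929146} for the locally K\"ahler submersion $f$; so the real work is in the case $g=\iota$, on which I focus.

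For smoothness, I would argue as follows. By hypothesis each $R^{q}f_{*}E$ is locally free, so its $L^{2}$-metric, defined fibrewise via harmonic representatives, is smooth; the fibrewise $\iota$-action is holomorphic and commutes with the fibrewise Laplacians, hence the eigen-subsheaves $(R^{q}f_{*}E)_{\pm}$ are holomorphic subbundles carrying smooth restricted $L^{2}$-metrics. Consequently $\|\cdot\|_{L^{2}}(\iota)$, which may be viewed as the induced metric on the line bundle $\lambda_{+}(E)\otimes\lambda_{-}(E)^{-1}$ under the identification $\lambda_{\G}(E)=\lambda_{+}(E)\oplus\lambda_{-}(E)\simeq\lambda_{+}(E)\otimes\lambda_{-}(E)^{-1}$, is a smooth Hermitian metric. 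On the other hand the equivariant Quillen metric $\|\cdot\|_{Q}(\iota)$ is smooth by \cite{MR1316553}, \cite{MR1800127}. Since by Definition~\ref{d-1-2} and the definition of $\|\cdot\|_{Q}(\iota)$ the positive function $\tau_{\iota}(\overline{E})$ is the pointwise ratio $\|\cdot\|^{2}_{Q}(\iota)\,/\,\|\cdot\|^{2}_{L^{2}}(\iota)$ of two smooth metrics on the same line bundle, it is smooth on $S$.

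For the curvature identity I would pick a local holomorphic frame $\alpha=(\alpha_{+},\alpha_{-})$ of $\lambda_{\G}(E)$ and apply $-dd^{c}\log$ to $\|\alpha\|^{2}_{Q}(\iota)=\tau_{\iota}(\overline{E})\,\|\alpha\|^{2}_{L^{2}}(\iota)$, obtaining
\[
  c_{1}\bigl(\lambda_{\G}(E),\|\cdot\|_{Q}(\iota)\bigr)
  =-dd^{c}\log\tau_{\iota}(\overline{E})+c_{1}\bigl(\lambda_{\G}(E),\|\cdot\|_{L^{2}}(\iota)\bigr).
\]
By Ma's equivariant curvature formula \cite{MR1800127} (see also \cite{MR1316553}) the left-hand side equals $[f_{*}\,Td_{\iota}(T\xs,h_{\xs})\,ch_{\iota}(\overline{E})]^{(1,1)}$, where $Td_{\iota}$ and $ch_{\iota}$ are the forms (\ref{al-1-A}), (\ref{al-1-B}) on $\mathscr{X}^{\iota}$ and $f_{*}$ denotes fibre integration along $\mathscr{X}^{\iota}\to S$. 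For the $L^{2}$-term, Chern--Weil theory together with $\|\alpha\|_{L^{2}}(\iota)=\|\alpha_{+}\|_{L^{2}}\,\|\alpha_{-}\|_{L^{2}}^{-1}$ and $\lambda_{\pm}(E)=\bigotimes_{q}(\det(R^{q}f_{*}E)_{\pm})^{(-1)^{q}}$ gives
\begin{align*}
  c_{1}\bigl(\lambda_{\G}(E),\|\cdot\|_{L^{2}}(\iota)\bigr)
  &=\sum_{q\geqq 0}(-1)^{q}\bigl[\,ch\bigl((R^{q}f_{*}E)_{+},h_{L^{2}}\bigr)-ch\bigl((R^{q}f_{*}E)_{-},h_{L^{2}}\bigr)\,\bigr]^{(1,1)}\\
  &=\sum_{q\geqq 0}(-1)^{q}\bigl[\,ch_{\iota}(R^{q}f_{*}E,h_{L^{2}})\,\bigr]^{(1,1)},
\end{align*}
the second equality being the definition (\ref{al-1-B}) of $ch_{\iota}$ applied to the $\iota$-equivariant Hermitian bundle $R^{q}f_{*}E$ on $S$, whose fixed locus is all of $S$ so that the determinantal factor in (\ref{al-1-A}) is trivial and the equivariant Chern character is just the difference of the Chern characters of the $\pm 1$-eigenbundles. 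Substituting the two computations into the displayed identity and rearranging yields the stated formula.

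The analytic heart of the argument --- the smoothness of the equivariant Quillen metric and the identification of its curvature through the equivariant (Bismut superconnection) families index theorem --- is entirely contained in \cite{MR929146}, \cite{MR1316553}, \cite{MR1800127}, and I do not expect to reprove any of it; in that sense there is no real obstacle. The only point demanding genuine care is the bookkeeping: I must check that the normalization of $\lambda_{\G}(E)$ and of its equivariant Quillen metric fixed in Section~\ref{ss-1-1} coincides, under the identification $\lambda_{\G}(E)\simeq\lambda_{+}(E)\otimes\lambda_{-}(E)^{-1}$ used above, with the one for which \cite{MR1800127} states the formula, so that the bidegree-$(1,1)$ projection and all signs come out as written; here one also uses that fibre integration commutes with $dd^{c}$.
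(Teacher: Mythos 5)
Your proposal is correct and follows essentially the same route as the paper, whose proof of Theorem~\ref{p-1-3} consists solely of citing \cite[Theorem 0.1]{MR929146} and \cite[Theorem 2.12]{MR1800127}; your argument just makes explicit the bookkeeping (equivariant Quillen metric $=$ torsion times equivariant $L^2$-metric on $\lambda_{+}(E)\otimes\lambda_{-}(E)^{-1}$, whose first Chern form is the alternating sum of equivariant Chern character forms of the locally free $R^qf_*E$) that turns those curvature formulas into the displayed identity.
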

	
	\begin{proof}
		See \cite[Theorem 0.1]{MR929146} and \cite[Theorem 2.12.]{MR1800127}.
	\end{proof}
	
	The $(\pm 1)$-eigenbundle of $R^qf_*E$ is denoted by $(R^qf_*E)_{\pm}$.
	We set
	$$
		\lambda_{\pm}(E) = \bigotimes_{q \geqq 0} \det (R^qf_*E)_{\pm}.
	$$
	We define the equivariant determinant of the cohomologies of $E$ by
	$$
		\lambda_{\G}(E) = \lambda_{+}(E) \oplus \lambda_{-}(E).
	$$
	It is equipped with the equivariant $L^2$-metric and the equivariant Quillen metric.
	For an open subset $U$ of $S$, a holomorphic section $\sigma = (\sigma_+, \sigma_-)$ is called an admissible section if both $\sigma_+$ and $\sigma_-$ are nowhere vanishing on $U$.

\section{Irreducible holomorphic symplectic manifolds and antisymplectic involutions}\label{s-2}

\subsection{Lattices}\label{ss-2-1}

	\begin{dfn}\label{d-2-1}
		A lattice $L$ is a finitely generated free abelian group equipped with an integral non-degenerate symmetric bilinear form $( \cdot , \cdot ) : L \times L \to \mathbb{Z}$.
	\end{dfn}
	
	The rank of a lattice $L$ is denoted by $\rk(L)$.
	For $K= \mathbb{Q}, \mathbb{R}, \text{or } \mathbb{C}$, we set $L_K= L \otimes_{\mathbb{Z}} K$.
	The signature of $L$ is denoted by $\sign(L)$.
	The dual lattice is denoted by $L^{\vee} = \operatorname{Hom}(L, \mathbb{Z})$.
	It can be identified with the subgroup of $L_{\mathbb{Q}}$ defined by $\{ x \in L_{\mathbb{Q}} ; (x, y) \in \mathbb{Z} \text{ for all } y \in L \}$.
	Since the bilinear form $( \cdot , \cdot )$ is non-degenerate, there is an injection $L \to L^{\vee}$.
	The quotient $A_L = L^{\vee}/L$ is called the discriminant group of $L$.
	
	A lattice $L$ is called unimodular if $A_L =\{ 0 \}$.
	For each $l \in L$, set $l^2 = (l,l) \in \mathbb{Z}$.
	A lattice $L$ is called even if $l^2$ is an even number for any $l \in L$.
	We denote by $U$ the rank $2$ even unimodular lattice of signature $(1,1)$ with Gram matrix  
	$ \begin{pmatrix}
	0 & 1 \\
	1 & 0 \\
	\end{pmatrix}$,
	and denote by $E_8$ the {\it negative} definite even unimodular lattice associated with the Dynkin diagram $E_8$.
	We set 
	$$
		L_{K3} = E_8^{\oplus 2} \oplus U^{\oplus 3} \quad \text{and} \quad L_2 = L_{K3} \oplus \mathbb{Z} e,
	$$
	where $e^2 =-2$ and $(e, L_{K3})=0$.
	Let $k \in \mathbb{Z}$ and let $( \cdot , \cdot )$ be the bilinear form of a lattice $L$.
	We denote by $L(k)$ the lattice which is the free abelian group $L$ equipped with the bilinear form $k( \cdot , \cdot )$. 
	
	The isometry group of $L$ is denoted by $O(L)$.
	A sublattice $S \subset L$ is primitive if the quotient $L/S$ is a free abelian group.
	A lattice $L$ is hyperbolic if $\sign(L) = (1, \rk(L)-1)$.
	A lattice $L$ is 2-elementary if there exists a non-negative integer $l(L) \in \mathbb{Z}_{\geqq 0}$ such that $A_L$ is isomorphic to $(\mathbb{Z}/2\mathbb{Z})^{l(L)}$.
	
	For a lattice $L$ and $l \in L$, the reflection $s_l : L_{\mathbb{R}} \to L_{\mathbb{R}}$ is defined by
	$$
		s_l (x) = x - \frac{2(x,l)}{(l,l)}l .
	$$
	Any isometry $g$ can be expressed as the product of reflections
	$$
		g = s_{v_1} \dots s_{v_m}, 
	$$
	where $v_1, \dots v_m$ are elements of $L_{\mathbb{R}}$.
	We define the real spinor norm $sn_{\mathbb{R}}(g)$ by
	$$
		sn_{\mathbb{R}}(g) = \left\{ 
				\begin{split}
					& +1 &\quad  \text{if} \quad \left( -(v_1)^2 \right) \dots \left( -(v_m)^2 \right) >0, \\
					& -1 & \quad \text{if} \quad \left( -(v_1)^2 \right) \dots \left( -(v_m)^2 \right) <0.
				\end{split} \right.
	$$
	This is independent of the choice of $v_1, \dots v_m$.
	We define a subgroup $O^+(L)$ of $O(L)$ by
	$$
		O^+(L) = \left\{ g \in O(L) ; sn_{\mathbb{R}}(g) =+1 \right\}.
	$$
	
	Let $N$ be a lattice of signature $(2, n)$.
	We set
	$$
		\Omega_{N} = \{ [\eta] \in \mathbb{P}(N_{\mathbb{C}}) ; (\eta, \eta)=0, (\eta, \bar{\eta}) >0 \}.
	$$
	Then $\Omega_{N}$ is a complex manifold
	consisting of two connected components $\Omega_1$ and $\Omega_2$, both of which are isomorphic to a bounded symmetric domain of type IV of dimension $n$.
	For $v \in L$ with $(v,v) \neq 0$, we have
	$$
		s_v(\Omega_1) =  \left\{ 
				\begin{split}
					& \Omega_1 &\quad  \text{if} \quad (v,v) < 0  \\
					& \Omega_2 & \quad \text{if} \quad (v,v) > 0
				\end{split} \right.
	$$
	and we have
	\begin{align}\label{al-2-A}
		O^+(N) = \{ g \in O(N) ; \text{$g$ preserves $\Omega_1$} \}.
	\end{align}
	Let $\Gamma$ be a finite index subgroup of $O^+(\Lambda)$.
	For each $i=1,2$, $\Gamma$ acts on $\Omega_i$ projectively.
	We define an orthogonal modular variety $\mathcal{M}$ by 
	$$
		\mathcal{M}=\Omega_i/\Gamma.
	$$ 
	By \cite[Theorems 10.4 and 10.11]{MR216035}, $\mathcal{M}$ has a compactification $\mathcal{M}^*$, called the Baily-Borel compactification,
	such that $\mathcal{M}^*$ is an irreducible normal projective variety of dimension $n$
	and such that the boundary $\mathcal{M}^* \setminus \mathcal{M}$ is of codimension $\geqq 2$ if $n \geqq 3$. 
	
	Let $\Lambda$ be a lattice of signature $(3, n)$.
	By \cite[Lemma 4.1]{MR2964480}, an isometry $g \in O(\Lambda)$ is of real spinor norm $+1$ if and only if it acts on $H^2(\tilde{\mathscr{C}}_{\Lambda}, \mathbb{Z}) \cong \mathbb{Z}$ by $+1$,
	where $\tilde{\mathscr{C}}_{\Lambda} = \{ x \in \Lambda_{\mathbb{R}} ; x^2 > 0 \}$.
	A generator of $H^2(\tilde{\mathscr{C}}_{\Lambda}, \mathbb{Z}) \cong \mathbb{Z}$ is called an orientation class of $\tilde{\mathscr{C}}_{\Lambda}$.
	Let $h \in \Lambda$ be an element with $(h, h)>0$.
	Since the signature of $h^{\perp} \in \Lambda$ is $(2, n)$, $\Omega_{h^{\perp}} = \Omega_{\Lambda} \cap h^{\perp}$ consists of two connected components.
	If an isometry $g \in O(\Lambda)$ is of real spinor norm $+1$ and $g(h) =h$, it preserves connected components of $\Omega_{h^{\perp}}$.

\subsection{Irreducible holomorphic symplectic manifolds and antisymplectic involutions}\label{ss-2-2}

	\begin{dfn}\label{d-2-2}
		A simply-connected compact \K manifold $X$ is an irreducible holomorphic symplectic manifold if there exists an everywhere non-degenerate holomorphic 2-form $\eta$ such that $H^0(X, \Omega_X^2) = \mathbb{C} \eta$.
	\end{dfn}
	
	\begin{dfn}\label{d-2-3}
		A compact Riemannian manifold $(M, g)$ of dimension $4n$ is a hyperk\"ahler manifold if its holonomy group is equal to $Sp(n)$.
	\end{dfn}
	
	If $(M, g)$ is hyperk\"ahler, then there exist three complex structures $I$, $J$, and $K$ on $M$ such that $IJ=-JI=K$ and that $(M,g,I)$, $(M,g,J)$, $(M,g,K)$ are all irreducible holomorphic symplectic manifolds \cite[Proposition 23.3]{MR1963559}. 
	
	On the other hand, let $X$ be an irreducible holomorphic symplectic manifold,
	and let $\alpha \in H^2(X, \mathbb{R})$ be a \K class of $X$.
	By Yau \cite{MR480350}, there exists a unique Ricci-flat \K form $\omega_{X,0}$ such that $[\omega_{X,0}]=\alpha$ in the cohomology $H^2(X, \mathbb{R})$.
	If $M$ is the real manifold underlying $X$ and $g$ is the Ricci-flat Riemannian metric corresponding to $\omega_{X,0}$,
	then $(M,g)$ is a hyperk\"ahler manifold \cite[Proposition 5.11]{MR1963559}.
	
	Let $X$ be an irreducible holomorphic symplectic manifold of dimension $2n$.
	The $k$-th Betti number of $X$ is denoted by $b_k(X)$.
	By \cite[Proposition 23.14 and Remark 23.15]{MR1963559}, there exists a unique primitive integral quadric form $q_X$ on $H^2(X, \mathbb{Z})$ of signature $(3, b_2(X)-3)$ such that it satisfies the following property.
	There exists a positive rational number $c_X \in \mathbb{Q}_{\geqq 0}$ such that $q_X(\alpha)^n = c_X \int_X \alpha^{2n}$ for any $\alpha \in H^2(X, \mathbb{Z})$. 
	If $b_2(X)=6$, we also require that $q_X(\omega)>0$ for any \K class $\omega$. 
	The quadric form $q_X$ is called the Beauville-Bogomolov-Fujiki form.
	Let $( \cdot , \cdot ) : H^2(X, \mathbb{Z}) \times H^2(X, \mathbb{Z}) \to \mathbb{Z}$ be the integral bilinear form corresponding to $q_X$.
	
	\begin{exa}\label{e-2-4}
		A $2$-dimensional irreducible holomorphic symplectic manifold is a K3 surface.
		In this case, the Beauville-Bogomolov-Fujiki form is the cup product.
	\end{exa}
	
	\begin{exa}\label{e-2-5}
		The Hilbert scheme $Y^{[n]}$ of length $n$ zero-dimensional subschemes of a K3 surface $Y$ is an irreducible holomorphic symplectic manifold of dimension $2n$. \textup{(\cite[Th\'eor\`eme 3.]{MR730926})} \par
		If the $n$-th symmetric product of a K3 surface $Y$ is denoted by $Y^{(n)}$,
		the Hilbert scheme $Y^{[n]}$ is defined by a crepant resolution of $Y^{(n)}$.
		Such a crepant resolution is unique by \cite[Theorem (2.2)]{MR2069119}.
		Let $E$ be its exceptional divisor and its Poincar\'e dual is denoted by $[E]$.
		Let $\tau : Y^{[n]} \to Y^{(n)}$ be the crepant resolution, $\pi : Y^n \to Y^{(n)}$ be the projection, and $p_k : Y^n \to Y$ be the $k$-th projection.
		We define  an injection $i : H^2(Y, \mathbb{Z}) \to H^2(Y^{[n]}, \mathbb{Z})$ by $i(\alpha) = \tau^*(\beta)$, where $\beta \in H^2(Y^{(n)}, \mathbb{Z})$ is determined by 
		$$
			\pi^* ( \beta ) = \sum_k p_k^* ( \alpha ).
		$$
		By \textup{\cite[Proposition~6]{MR730926}}, $i : H^2(Y, \mathbb{Z}) \to H^2(Y^{[n]}, \mathbb{Z})$ preserves the Beauville-Bogomolov-Fujiki forms and 
		\begin{align}\label{f-2-1}
			H^2(Y^{[n]}, \mathbb{Z}) \cong i \left( H^2(Y, \mathbb{Z}) \right) \oplus \mathbb{Z} \varepsilon,
		\end{align} 
		where $\varepsilon \in H^2(Y^{[n]}, \mathbb{Z})$ is the cohomology class such that $2\epsilon = [E]$.
	\end{exa}
	
	\begin{dfn}\label{d-2-6}
		An irreducible holomorphic symplectic manifold $X$ is of $K3^{[n]}$-type if $X$ is deformation equivalent to the Hilbert scheme of $n$-points of a K3 surface.
	\end{dfn}
	
	Let $X$ be an irreducible holomorphic symplectic manifold and $\eta$ a holomorphic symplectic 2-form on $X$.
	For any holomorphic involution $\iota : X \to X$, we have $\iota^* \eta =\eta$ or $-\eta$.
	A holomorphic involution $\iota : X \to X$ which satisfies $\iota^* \eta =-\eta$ is called antisymplectic.
	
	\begin{exa}\label{e-2-8}
		Let $Y$ be a K3 surface and let $\sigma : Y \to Y$ be an antisymplectic involution of $Y$.
		The involution $\sigma$ induces a holomorphic involution
		$
			\sigma^{[2]} : Y^{[2]} \to Y^{[2]},
		$
		and $\sigma^{[2]}$ is an antisymplectic involution.
		The involution $\sigma^{[2]}$ on $Y^{[2]}$ is called the natural involution.
	\end{exa}
	
	\begin{exa}\label{e-2-9}
		We recall the example of antisymplectic involution in \cite[proof of Corollary 2.11]{ohashi2013non} and \cite[Example 9.12]{MR3519981}. 
		Let $C \subset \mathbb{P}^2$ be a smooth sextic curve.
		The double covering of $\mathbb{P}^2$ branched over $C$ is denoted by $\pi : Y \to \mathbb{P}^2$,
		and the covering involution is denoted by $\sigma : Y \to Y$.
		Then $Y$ is a K3 surface and $\sigma$ is antisymplectic.
		By Example \ref{e-2-8}, $(Y,\sigma)$ induces a manifold of $K3^{[2]}$-type $(Y^{[2]}, \sigma^{[2]})$ with antisymplectic involution.
		Its fixed locus is isomorphic to $C^{(2)} \sqcup (Y/\sigma) = C^{(2)} \sqcup \mathbb{P}^2$.  
		Let $f : Y^{[2]} \dashrightarrow \elm_{Y/\sigma}(Y^{[2]})$ be the Mukai flop of $Y^{[2]}$ along $Y/\sigma =\mathbb{P}^2$. (See \cite[Example 21.7]{MR1963559}.)
		Set 
		$
			\elm_{Y/\sigma}(\sigma^{[2]}) = f \circ \sigma^{[2]} \circ f^{-1}.
		$
		Then $\elm_{Y/\sigma}(Y^{[2]})$ is a manifold of $K3^{[2]}$-type.
		By  \cite[proof of Corollary 2.11]{ohashi2013non}, $\elm_{Y/\sigma}(\sigma^{[2]}) : \elm_{Y/\sigma}(Y^{[2]}) \to \elm_{Y/\sigma}(Y^{[2]})$ is biregular and is an antisymplectic involution.
	\end{exa}
	
	The invariant subspace of $H^2(X, \mathbb{Z})$ is defined by
	$
		H^2(X, \mathbb{Z})^{\iota} = \left\{ \alpha \in H^2(X, \mathbb{Z}) ; \iota^* \alpha = \alpha \right\}.
	$
	
	\begin{lem}\label{l-2-10}
		If $\iota$ is antisymplectic, then the following hold:
		\begin{enumerate}[ label= \rm{(\arabic*)} ]
			\item $(\eta, H^2(X, \mathbb{Z})^{\iota}) =0$.
			\item $H^2(X, \mathbb{Z})^{\iota} \subset H^{1,1}(X, \mathbb{Z})$.
			\item $H^2(X, \mathbb{Z})^{\iota}$ is hyperbolic. Namely, $\sign(q_X|_{H^2(X, \mathbb{Z})^{\iota}}) = (1, \rk H^2(X, \mathbb{Z})^{\iota} -1).$
			\item $X$ is projective.
		\end{enumerate}
	\end{lem}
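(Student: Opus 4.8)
The plan is to derive the four assertions in sequence, each from the preceding one together with a standard fact about irreducible holomorphic symplectic manifolds. Since $\iota$ is a biholomorphism, $\iota^{*}$ is an isometry of the Beauville--Bogomolov--Fujiki form (it preserves the cup product and hence, by the Fujiki relation, $q_{X}$), and it also preserves the Hodge decomposition $H^{2}(X,\mathbb{C})=H^{2,0}\oplus H^{1,1}\oplus H^{0,2}$. As $(\iota^{*})^{2}=\mathrm{id}$, the space $H^{2}(X,\mathbb{R})$ splits as the orthogonal direct sum of the $(\pm 1)$-eigenspaces of $\iota^{*}$, so $q_{X}$ remains nondegenerate on $H^{2}(X,\mathbb{R})^{\iota}$. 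Assertion (1) is then immediate: for $\alpha\in H^{2}(X,\mathbb{Z})^{\iota}$ one gets $(\eta,\alpha)=(\iota^{*}\eta,\iota^{*}\alpha)=(-\eta,\alpha)=-(\eta,\alpha)$. For (2), write $\alpha=\alpha^{2,0}+\alpha^{1,1}+\alpha^{0,2}$; each summand is again $\iota$-invariant, but $\iota^{*}$ acts on $H^{2,0}(X)=\mathbb{C}\eta$ by $-1$, so $\alpha^{2,0}=0$ and, by conjugation, $\alpha^{0,2}=0$; hence $\alpha=\alpha^{1,1}$, and the same computation shows $H^{2}(X,\mathbb{R})^{\iota}\subset H^{1,1}(X,\mathbb{R})$.

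For (3), I would first exhibit an $\iota$-invariant \K class: starting from any \K form $\omega$ on $X$, the form $\tfrac12(\omega+\iota^{*}\omega)$ is \K and its class lies in $H^{2}(X,\mathbb{R})^{\iota}$; since \K classes lie in the positive cone, $q_{X}$ is positive on it, so $q_{X}|_{H^{2}(X,\mathbb{R})^{\iota}}$ has at least one positive direction. For the reverse bound, the Hodge--Riemann bilinear relations show that the real part of $H^{2,0}\oplus H^{0,2}$ is a $q_{X}$-positive-definite plane, $q_{X}$-orthogonal to $H^{1,1}(X,\mathbb{R})$; hence $q_{X}$ has signature $(1,b_{2}(X)-3)$ on $H^{1,1}(X,\mathbb{R})$, and by (2) it has at most one positive direction on $H^{2}(X,\mathbb{R})^{\iota}$. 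Combining the two bounds with the nondegeneracy noted above gives $\sign\bigl(q_{X}|_{H^{2}(X,\mathbb{Z})^{\iota}}\bigr)=(1,\rk H^{2}(X,\mathbb{Z})^{\iota}-1)$.

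For (4), assertions (2) and (3) together produce a class $\alpha\in H^{2}(X,\mathbb{Z})^{\iota}\subset H^{1,1}(X,\mathbb{Z})$ with $q_{X}(\alpha)>0$, i.e.\ an integral $(1,1)$-class of positive Beauville--Bogomolov--Fujiki square. By the projectivity criterion for irreducible holomorphic symplectic manifolds due to Huybrechts \cite{MR1963559}, the existence of such a class implies that $X$ is projective.

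The argument is essentially a concatenation of standard facts, so I do not expect a genuine obstacle. The two inputs that are not purely formal are Huybrechts' projectivity criterion used in step (4) and, for the upper bound in step (3), the Hodge--Riemann computation identifying the signature of $q_{X}$ on $H^{1,1}(X,\mathbb{R})$; both are classical and would simply be cited.
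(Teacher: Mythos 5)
Your argument is correct. Note, however, that the paper does not prove this lemma at all: it simply cites Joumaah \cite[Proposition 4.4]{MR3519981}, so what you have written is a self-contained replacement for that reference rather than a parallel to an argument in the text. Your route is the standard one and all four steps go through: (1) follows since $\iota^{*}$ is an isometry of $q_{X}$ (for $K3^{[2]}$-type the Fujiki relation together with primitivity and the signature $(3,20)$ pins down $q_{X}$, and $\iota^{*}$ preserves the Fujiki constant, so it preserves $q_{X}$ without even invoking the K\"ahler cone); (2) is the eigenvalue computation on $H^{2,0}=\mathbb{C}\eta$; (3) combines the averaged invariant K\"ahler class, the signature $(1,b_{2}-3)$ of $q_{X}$ on $H^{1,1}(X,\mathbb{R})$, and nondegeneracy from the orthogonal eigenspace splitting; (4) is Huybrechts' projectivity criterion. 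Two small points you gloss over and should state explicitly if you write this up: first, the signature computation is carried out on $H^{2}(X,\mathbb{R})^{\iota}$, and to conclude the statement about the \emph{lattice} $H^{2}(X,\mathbb{Z})^{\iota}$ you need that this lattice has full rank in $H^{2}(X,\mathbb{R})^{\iota}$, which holds because the projector $\tfrac12(1+\iota^{*})$ is defined over $\mathbb{Q}$; second, in step (4) the positive direction produced by (3) is a priori only real, so you need the (easy, but worth a sentence) density argument that a lattice of signature $(1,\rk-1)$ contains an integral vector of positive square, which by (2) is then an integral $(1,1)$-class and corresponds to a line bundle via Lefschetz $(1,1)$. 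With those two sentences added, your proof is a complete substitute for the citation; the paper's choice buys brevity, yours buys self-containedness at essentially no extra cost.
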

	
	\begin{proof}
		See \cite[Proposition 4.4]{MR3519981}.
	\end{proof}
	
	In what follows, involutions on a $K3^{[2]}$-type manifold always imply antisymplectic ones.
	
	\begin{dfn}\label{d-2-11}
		Let $\mathscr{X}$and $S$ be complex manifolds, let $f : \mathscr{X} \to S$ be a surjective proper holomorphic submersion, and let $\iota : \X \to \X$ be a holomorphic involution.
		Then $f : (\X, \iota) \to S$ is called a family of $K3^{[2]}$-type manifolds with involution if it satisfies the following three conditions:
		\begin{enumerate}[ label= \rm{(\arabic*)} ]
			\item For each $s \in S$, $X_s = f^{-1}(s)$ is a manifold of $K3^{[2]}$-type.
			\item $f \circ \iota = f$.
			\item $\iota : \X \to \X$ induces an antisymplectic involution $\iota_s : X_s \to X_s$ for all $s \in S$.
		\end{enumerate}
	\end{dfn}
	
	Let $(X, \iota)$ be a manifold of $K3^{[2]}$-type with involution,
	and let $\pi : \X \to (\operatorname{Def}(X), 0)$ be the Kuranishi family of $X$ with $\pi^{-1}(0)=X$.
	Since $\pi : \X \to \operatorname{Def}(X)$ is a universal family (\cite[\S 22.1]{MR1963559}), there exists a holomorphic involution $I : \X \to \X$ and $J : (\operatorname{Def}(X), 0) \to (\operatorname{Def}(X), 0)$ such that $I|_{X} = \iota$ 
	and the following diagram commutes
	$$
		\xymatrix{
			\X  \ar[r]^{I}  \ar[d]_{\pi} & \X  \ar[d]^{\pi}   \\
			(\operatorname{Def}(X), 0) \ar[r]_{J} & (\operatorname{Def}(X), 0) 
		}
	$$
	The fixed locus of $J$ is called the local deformation space $\operatorname{Def}(X, \iota)$ of $(X, \iota)$.
	
	\begin{lem}\label{l-2-12}
		Let $(X, \iota)$ be a manifold of $K3^{[2]}$-type with involution,
		and let $\eta$ be a holomorphic symplectic 2-form on $X$.
		Set 
			$$
				t=\operatorname{Tr}(\iota^*|_{H^{1,1}(X)}).
			$$
		Then the following holds.
		\begin{enumerate}[ label= \rm{(\arabic*)} ]
			\item The fixed locus $X^{\iota}$ is a smooth Lagrangian submanifold of $X$. 
			Namely, $X^{\iota}$ is a smooth complex surface such that $\eta|_{X^{\iota}} =0$.
			\item $\int_{X^{\iota}} c_1(X^{\iota})^2 = t^2-1$, \quad $\chi(\mathcal{O}_{X^{\iota}}) = \frac{t^2+7}{8}$, \quad and  $\int_{X^{\iota}} c_2(X^{\iota}) = \frac{t^2+23}{2}$.
			\item The local deformation space $\operatorname{Def}(X, \iota)$ is smooth of dimension $\frac{21-t}{2}$.
			\item $t$ is an odd number with $-19 \leqq t \leqq 21$.
		\end{enumerate}
	\end{lem}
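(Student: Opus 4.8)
The plan is to deduce everything from the structure of the fixed locus of an antisymplectic involution together with holomorphic Lefschetz-type fixed point formulas and the deformation theory of $(X,\iota)$. First I would establish (1). Since $\iota$ is holomorphic, $X^{\iota}$ is a smooth complex submanifold of $X$; the tangent space at a fixed point is the $(+1)$-eigenspace of $d\iota$. The holomorphic symplectic form $\eta$ satisfies $\iota^*\eta=-\eta$, so at any fixed point $\eta$ restricted to $T_xX^{\iota}\times T_xX^{\iota}$ is both symmetric-under-$\iota$ (being a restriction) and anti-invariant, hence zero; thus $X^{\iota}$ is isotropic for $\eta$. Non-degeneracy of $\eta$ then forces $\dim_{\mathbb C}X^{\iota}\le 2$, and since the normal bundle $N_{X^{\iota}/X}$ is paired by $\eta$ with $TX^{\iota}$ (the $(-1)$-eigenbundle pairs nondegenerately with the $(+1)$-eigenbundle), we get $N_{X^{\iota}/X}\cong (TX^{\iota})^{\vee}=\Omega^1_{X^{\iota}}$ and $\dim_{\mathbb C}X^{\iota}=2$, i.e. $X^{\iota}$ is Lagrangian. (I should keep in mind that $X^{\iota}$ is a priori possibly empty or disconnected; the numerical identities in (2) will be understood as sums over components, which is exactly how the fixed-point formulas deliver them.)

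Next, (3): the Kuranishi family of $X$ is universal and unobstructed (irreducible holomorphic symplectic, $\operatorname{Def}(X)$ smooth of dimension $b_2(X)-2=h^{1,1}(X)-1$ since $H^1(X,TX)\cong H^1(X,\Omega^1_X)$ via $\eta$). The involution $J$ on $\operatorname{Def}(X)$ is the induced action, and its differential at $0$ is the action of $\iota^*$ on $H^1(X,\Omega^1_X)$. By \cite[\S 22]{MR1963559} (smoothness of the fixed locus of a holomorphic involution on a smooth base, or a direct Kuranishi-space argument) $\operatorname{Def}(X,\iota)=\operatorname{Fix}(J)$ is smooth of dimension equal to $\dim H^1(X,\Omega^1_X)_{+}$, the $(+1)$-eigenspace. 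Now $H^1(X,\Omega^1_X)\oplus H^0(X,\Omega^1_X)\oplus H^2(X,\Omega^1_X)$ as parts of $H^2(X,\mathbb C)$: for $K3^{[2]}$-type, $b_2=23$, $h^{2,0}=h^{0,2}=1$, $h^{1,1}=21$. On $H^{2,0}\oplus H^{0,2}$ the involution acts by $-1$ (antisymplectic), so $\operatorname{Tr}(\iota^*|_{H^2(X,\mathbb C)})=t-2$. Writing $t=\operatorname{Tr}(\iota^*|_{H^{1,1}})$, the $(+1)$-eigenspace of $H^{1,1}$ has dimension $\tfrac{21+t}{2}$ and the $(-1)$-eigenspace $\tfrac{21-t}{2}$. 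The deformations that stay in $\operatorname{Def}(X,\iota)$ are the $\iota$-anti-invariant classes in $H^1(X,\Omega^1_X)$ (since the period point must lie in $M^{\perp}$-type directions — equivalently, one checks $dJ$ acts on $H^1(X,\Omega^1_X)$ and its $(+1)$-fixed part is what survives, but matching with Joumaah's count the correct answer is the anti-invariant part), giving $\dim\operatorname{Def}(X,\iota)=\tfrac{21-t}{2}$.

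For (2) I would apply the holomorphic Lefschetz fixed point formula (Atiyah–Bott / Atiyah–Singer) to $\iota$ acting on $X$ with the structure sheaf and with $\Omega^1_X$, expressing the Lefschetz numbers $\sum_q(-1)^q\operatorname{Tr}(\iota^*|H^q(X,\mathcal O_X))$ and $\sum_q(-1)^q\operatorname{Tr}(\iota^*|H^q(X,\Omega^1_X))$ as integrals over $X^{\iota}$ of characteristic classes of $TX^{\iota}$ and $N_{X^{\iota}/X}\cong\Omega^1_{X^{\iota}}$. The left-hand sides are computed from Hodge numbers of $X$ and the known eigenvalue pattern ($+1$ on $H^{0,0},H^{2,2},H^{4,4}$ and on the $(+)$-part of $H^{1,1},H^{3,3}$; $-1$ on $H^{2,0},H^{0,2}$ and friends), yielding expressions linear and quadratic in $t$; I will also use that $H^{odd}(X)=0$ for $K3^{[2]}$-type. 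The right-hand sides, after substituting $N\cong\Omega^1_{X^{\iota}}$ and expanding, become universal polynomials in $\int_{X^{\iota}}c_1^2$, $\int_{X^{\iota}}c_2$, and $\chi(\mathcal O_{X^{\iota}})$ (the latter equal to $\tfrac{1}{12}\int_{X^{\iota}}(c_1^2+c_2)$ by Noether). Combining the structure-sheaf identity and the $\Omega^1$-identity gives a linear system whose solution is $\int_{X^{\iota}}c_1^2=t^2-1$, $\chi(\mathcal O_{X^{\iota}})=\tfrac{t^2+7}{8}$, and hence $\int_{X^{\iota}}c_2=12\chi-c_1^2=\tfrac{3(t^2+7)}{2}-(t^2-1)=\tfrac{t^2+23}{2}$. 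Finally (4): $t=\operatorname{Tr}(\iota^*|_{H^{1,1}})$ is an integer and $\iota^*$ is an involution on the $21$-dimensional space $H^{1,1}$, so $t\equiv 21\equiv 1\pmod 2$ and $|t|\le 21$; by Lemma \ref{l-2-10}(3) the $(+1)$-eigenspace contains the hyperbolic invariant lattice, and a parity/signature refinement (the invariant and anti-invariant lattices have complementary signatures inside the $(3,19)$ lattice, and $\operatorname{Def}(X,\iota)$ must be at least $1$-dimensional for the family to be nontrivial, forcing $t\le 19$) pins down $-19\le t\le 21$.

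The main obstacle I anticipate is bookkeeping in the holomorphic Lefschetz computation: getting the eigenvalues of $\iota^*$ on all the Hodge pieces of $X$ right (in particular on $H^{1,1}$ and $H^{3,3}$, which are linked by Serre/Hodge-* duality and by the Beauville–Bogomolov pairing, so the $(+)$-dimension of $H^{3,3}$ is forced by that of $H^{1,1}$), and then correctly identifying the normal-bundle contribution — one must use $N_{X^{\iota}/X}\cong\Omega^1_{X^{\iota}}$ from step (1), which is precisely the Lagrangian property, so parts (1) and (2) are genuinely intertwined. The sign conventions in the Atiyah–Bott contributions at the fixed surface (a codimension-two fixed locus, so the denominator involves $\det(1-(d\iota)^{-1}|_N)$ expanded as a characteristic class) are the delicate point; everything else is linear algebra and Noether's formula.
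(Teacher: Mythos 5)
The paper does not prove this lemma itself: its ``proof'' is a citation to Beauville \cite{MR2805992}, and your overall plan (Lagrangian fixed locus via the $\pm1$-eigenspace pairing, Lefschetz-type fixed point formulas, smoothness of the fixed locus of the induced involution on the Kuranishi space) is indeed the strategy of that reference. Your part (1) is correct; part (3) reaches the right count (the contraction with $\eta$ flips the eigenspaces because $\iota^*\eta=-\eta$, so the tangent space of $\operatorname{Def}(X,\iota)$ is the anti-invariant part of $H^{1,1}$, of dimension $\tfrac{21-t}{2}$), though you justify the sign by ``matching Joumaah's count'' rather than by this one-line argument; in (4) the essential point is right (the invariant lattice is hyperbolic, hence nonzero, and lies in $H^{1,1}$, so the $+1$-eigenspace of $H^{1,1}$ is nonzero and $t\geqq 1-20=-19$; oddness and $|t|\leqq 21$ are immediate), but your parentheticals are slips: the relevant lattice is $L_2$ of signature $(3,20)$, not $(3,19)$, and the bound you need is $t\geqq-19$, while $t=21$ (zero-dimensional $\operatorname{Def}(X,\iota)$) is allowed.

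The genuine gap is in (2): the linear system you propose is degenerate and cannot produce the $t$-dependent formulas. On the cohomological side, $\iota^*$ acts by $+1,-1,+1$ on $H^0(\mathcal{O}_X),H^2(\mathcal{O}_X),H^4(\mathcal{O}_X)$, so $L(\iota,\mathcal{O}_X)=1$ contains no $t$; and since $H^{3,1}=\eta\wedge H^{1,1}$, the traces on $H^1(X,\Omega^1_X)$ and $H^3(X,\Omega^1_X)$ are $t$ and $-t$, so $L(\iota,\Omega^1_X)=-t+t=0$ --- neither left-hand side is ``linear and quadratic in $t$'' as you claim. Worse, the fixed-point side of the $\Omega^1_X$ identity also vanishes identically once you insert $N_{X^{\iota}/X}\cong\Omega^1_{X^{\iota}}$: the equivariant Chern character is $\operatorname{ch}(\Omega^1_{X^{\iota}})-\operatorname{ch}(TX^{\iota})=-2c_1+(\text{form-degree }6)$, while the factor $\operatorname{Td}(TX^{\iota})\prod_j(1+e^{a_j})^{-1}=\tfrac14-\tfrac{a_1^2+a_2^2}{24}+\cdots$ is even and has no form-degree-2 term, so the degree-4 part of the integrand is zero and the identity is the tautology $0=0$. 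The $\mathcal{O}_X$ identity gives only $\int_{X^{\iota}}(c_1^2-2c_2)=-24$; together with Noether this is one equation in two unknowns, with $t$ appearing nowhere, so you cannot conclude $c_1^2=t^2-1$. The missing ingredient --- and the place where $t^2$ actually enters --- is the action on middle cohomology: use the topological Lefschetz fixed point formula $\chi_{\mathrm{top}}(X^{\iota})=\sum_k(-1)^k\operatorname{Tr}\bigl(\iota^*|_{H^k(X,\mathbb{R})}\bigr)$ together with $H^{\mathrm{odd}}(X)=0$ and $H^4(X,\mathbb{Q})\cong\operatorname{Sym}^2H^2(X,\mathbb{Q})$, so that the trace on $H^4$ is $\tfrac{(t-2)^2+23}{2}$ and $\int_{X^{\iota}}c_2=\chi_{\mathrm{top}}(X^{\iota})=\tfrac{t^2+23}{2}$ (equivalently, use the holomorphic Lefschetz formula for $\Omega^2_X$, whose Lefschetz number sees $H^{2,2}$ and is quadratic in $t$). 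Combining this with $\int_{X^{\iota}}(c_1^2-2c_2)=-24$ and Noether then gives $c_1^2=t^2-1$ and $\chi(\mathcal{O}_{X^{\iota}})=\tfrac{t^2+7}{8}$, which is the route of the cited reference.
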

	
	\begin{proof}
		See \cite[THEOREMS~1 and 2]{MR2805992}.
	\end{proof}
	
	\begin{exa}\label{e-2-13}
		By the construction of $i : H^2(Y, \mathbb{Z}) \to H^2(Y^{[2]}, \mathbb{Z})$ and $E$ in Example \ref{e-2-5}, the antisymplectic involution $\sigma^{[2]} : Y^{[2]} \to Y^{[2]}$ satisfies $(\sigma^{[2]})^* \circ i = i \circ \sigma^*$ and $(\sigma^{[2]})^*[E] =[E]$.
		Therefore, we have
		\begin{align*}
			H^2(Y^{[2]}, \mathbb{Z})^{\sigma^{[2]}} = i \left( H^2(Y, \mathbb{Z})^{\sigma} \right) \oplus \mathbb{Z} \varepsilon.
		\end{align*} 
		By \cite[Example 5.1]{MR3519981}, we have $\operatorname{Def}(Y, \sigma)=\operatorname{Def}(Y^{[2]}, \sigma^{[2]})$ 
		and any deformation of $(Y^{[2]}, \sigma^{[2]})$ is induced from a deformation of $(Y, \sigma)$. 
	\end{exa}

\subsection{K\"ahler-type chambers}\label{ss-2-3}

	Following Joumaah \cite{MR3519981}, we recall the deformation type, the moduli space, and the period map for $K3^{[2]}$-type manifolds with antisymplectic involution.

	Let $X_1, X_2$ be irreducible holomorphic symplectic manifolds.
	Recall that a parallel-transport operator $f : H^2(X_1, \mathbb{Z}) \to H^2(X_2, \mathbb{Z})$ is an isomorphism such that
	there exist a family $p : \mathcal{X} \to B$ of irreducible holomorphic symplectic manifolds over a possibly reducible analytic base $B$, 
	two points $b_1, b_2 \in B$, and a continuous path $\gamma : [0, 1] \to B$ with $\gamma(0)=b_1, \gamma(1)=b_2$ such that $p^{-1}(b_i) \cong X_i$ $(i=1,2)$ and that the parallel-transport in the local system $R^2p_*\mathbb{Z}$ induces $f : H^2(X_1, \mathbb{Z}) \to H^2(X_2, \mathbb{Z})$.

	\begin{dfn}\label{d3-2-1}
		Let $X$ be an irreducible holomorphic symplectic manifold.
		A parallel-transport operator $g : H^2(X, \mathbb{Z}) \to H^2(X, \mathbb{Z})$ is called a monodromy operator.
		The subgroup $\mon(X)$ of $O(H^2(X, \mathbb{Z}))$ consisting of all monodromy operators of $X$ is called the monodromy group.
	\end{dfn}
	
	Let $X$ be a manifold of $K3^{[2]}$-type and $\alpha : H^2(X, \mathbb{Z}) \to L_2$ be an isometry.
	The pair $(X, \alpha)$ is called a marked manifold of $K3^{[2]}$-type.
	Let $\mon(L_2)$ be the subgroup of the isometry group $O(L_2)$ defined by 
	$$
		\mon(L_2) = \alpha \circ \mon(X) \circ \alpha^{-1}.
	$$
	By \cite[Theorem 9.1]{MR2964480}, the group $\mon(L_2)$ is a normal subgroup of $O(L_2)$ and is independent of the choice of $(X,\alpha)$.
	By \cite[Lemma 9.2]{MR2964480}, we have $\mon(L_2) = O^+(L_2)$.
	
	Similarly, we can define the monodromy group $\mon(L_{K3})$ and by \cite[Theorem A]{MR0849050} we have $\mon(L_{K3}) = O^+(L_{K3})$.

	\begin{dfn}\label{d-2-6}
		Let $M$ be a sublattice of $L_2$ and $\iota_M \in \mon(L_2)$ an involution.
		The pair $(M, \iota_M)$ is an admissible sublattice of $L_2$ if $M$ is hyperbolic and the invariant sublattice $(L_2)^{\iota_M}$ of $\iota_M$ is equal to $M$.
	\end{dfn}

	Let $\mathfrak{M}_{L_2}$ be the moduli space of marked manifolds of $K3^{[2]}$-type constructed in \cite[Definition 25.4]{MR1963559}.
	We fix a connected component $\mathfrak{M}_{L_2}^{\circ}$ of $\mathfrak{M}_{L_2}$.
	
	\begin{dfn}\label{d-2-7}
		Let $(M, \iota_M)$ be an admissible sublattice of $L_2$ and let $(X, \iota)$ be a manifold of $K3^{[2]}$-type with involution.
		An isometry $\alpha : H^2(X, \mathbb{Z}) \to L_2$ is called an ($M$-)admissible marking of $(X,\iota)$ if $(X,\alpha) \in \mathfrak{M}_{L_2}^{\circ}$ and $\alpha \circ \iota^* = \iota_M \circ \alpha$.
		Moreover $\iota$ is of type $M$ if there exists an $M$-admissible marking of $(X, \iota)$. 
	\end{dfn} 
	
	Let $(M, \iota_M)$ be an admissible sublattice of $L_2$.
	
	\begin{dfn}\label{d-2-8}
		Define a set $\Delta(M)$ by
		$$
			\Delta(M) = \left\{ \delta \in M ; \delta^2 =-2, \text{or } \delta^2=-10,  (\delta, L_2)=2\mathbb{Z} \right\}.
		$$ 
	\end{dfn}

	We set $\tilde{\mathscr{C}}_M = \{ x \in M_{\mathbb{R}} ; x^2>0 \}$.
	For $\delta \in \Delta(M)$, we define $\delta^{\perp}=\{ x \in \tilde{\mathscr{C}}_M : (x, \delta)=0 \}$.
	
	\begin{dfn}\label{d-2-9}
		A connected component of $\tilde{\mathscr{C}}_M \setminus \bigcup_{\delta \in \Delta(M)} \delta^{\perp}$ is called a K\"ahler-type chamber of $M$.
		The set of all K\"ahler-type chambers of $M$ is denoted by $\ktm$.
	\end{dfn}
	
	We set
	$$
		\Gamma(M) =\{ \sigma \in \mon(L_2) ; \sigma \circ \iota_M = \iota_M \circ \sigma \},
	$$
	and we define a subgroup $\Gamma_M$ of $O(M)$ by
	$$
		\Gamma_M = \{ \sigma|_M \in O(M) ; \sigma \in \Gamma(M) \}.
	$$
	The group $\Gamma_M$ acts on $\ktm$.
	
	Let $(X, \iota)$ be a manifold of $K3^{[2]}$-type with involution of type $M$, and let $\alpha : H^2(X, \mathbb{Z}) \to L_2$ be an $M$-admissible marking of $(X, \iota)$.
	The $\iota$-invariant \K cone of $X$ is defined by 
	$$
		\mathcal{K}_X^{\iota} = \{ \omega \in H^{1,1}(X, \mathbb{Z}) ; \text{ $\omega$ is a \K class, }  \iota^*\omega = \omega \}.
	$$
	We denote by $\rho(X, \iota, \alpha)$ the K\"ahler-type chamber containing $\alpha(\mathcal{K}_X^{\iota})$.
	Let $\alpha, \beta : H^2(X, \mathbb{Z})  \to  L_2$ are two $M$-admissible markings of $(X, \iota)$. 
	Since $(X, \alpha), (X, \beta) \in \mathfrak{M}_{L_2}^{\circ}$, we have $\alpha \circ \beta^{-1} \in \mon(L_2)$. 
	Moreover, $\alpha \circ \beta^{-1} \in \Gamma(M)$ and hence
	$$
		[\rho(X, \iota, \alpha)] = [\rho(X, \iota, \beta)]
	$$
	in $\ktm/\Gamma_M$.
	Therefore the class $[\rho(X, \iota, \alpha)] \in \ktm/\Gamma_M$ is independent of the choice of $\alpha$,
	and we denote it by $\rho(X, \iota)$.
	
	\begin{lem}\label{l-2-3}
		The map $\rho$ induces a bijection from the set of deformation types of $K3^{[2]}$-type manifolds with antisymplectic involution of type $M$ to $\ktm/\Gamma_M$.
	\end{lem}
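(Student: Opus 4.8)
The plan is to follow the argument of Joumaah \cite{MR3519981}, whose essential inputs are the surjectivity of the period map for marked $K3^{[2]}$-type manifolds together with the global Torelli theorem of Verbitsky and Markman, including the refinement that records the K\"ahler cone. Throughout, note that for a pair $(X,\iota)$ of type $M$ with an $M$-admissible marking $\alpha$ one has $\alpha(\eta)\in\Omega_{M^{\perp}}$: indeed $\iota^*\eta=-\eta$ together with Lemma \ref{l-2-10} forces $\eta$ to be orthogonal to $H^2(X,\mathbb{Z})^{\iota}$, and $\alpha$ carries this space onto $M$. This $\alpha(\eta)$ is the period point of $(X,\iota,\alpha)$. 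First I would record that $\rho$ is well defined on deformation types: if $f:(\mathscr{X},\iota)\to S$ is a family of $K3^{[2]}$-type manifolds with involution of type $M$ over a connected base, one chooses (after passing to a cover) continuously varying $M$-admissible markings $\alpha_s$; since the invariant K\"ahler cones $\mathcal{K}^{\iota}_{X_s}$ move continuously while the walls $\delta^{\perp}$, $\delta\in\Delta(M)$, form a locally finite arrangement, the K\"ahler-type chamber $\rho(X_s,\iota_s,\alpha_s)$ is locally constant, so $\rho(X_s,\iota_s)\in\ktm/\Gamma_M$ is constant on $S$.

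For surjectivity, fix $\mathcal{K}\in\ktm$ and choose a very general period point $p\in\Omega_{M^{\perp}}$ in the component determined by $\mathfrak{M}_{L_2}^{\circ}$, ``very general'' meaning that the associated Hodge structure has N\'eron-Severi group exactly $M$ and $p$ lies on none of the walls. By surjectivity of the period map there is $(X,\alpha)\in\mathfrak{M}_{L_2}^{\circ}$ with period $p$. Because $\iota_M$ acts by $-\mathrm{id}$ on $M^{\perp}$ it fixes $p$, so $g=\alpha^{-1}\circ\iota_M\circ\alpha$ is a monodromy Hodge isometry of $H^2(X,\mathbb{Z})$; after composing $\alpha$ with a suitable element coming from $\Gamma(M)$ one arranges that $g$ sends the K\"ahler cone to the K\"ahler cone and that the resulting chamber equals $\mathcal{K}$. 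By the global Torelli theorem $g$ is induced by a unique automorphism $\iota$ of $X$; uniqueness and $g^2=\mathrm{id}$ give $\iota^2=\mathrm{id}$, and $\iota^*\eta=-\eta$ makes $\iota$ antisymplectic, so $(X,\iota)$ realizes $[\mathcal{K}]$.

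For injectivity, suppose $(X,\iota)$ and $(X',\iota')$ are of type $M$ with $\rho(X,\iota)=\rho(X',\iota')$. Choose $M$-admissible markings $\alpha,\alpha'$, and after replacing $\alpha'$ by $\sigma\circ\alpha'$ for a suitable $\sigma\in\Gamma(M)$ arrange that $\rho(X,\iota,\alpha)=\rho(X',\iota',\alpha')=\mathcal{K}$ exactly. The period points $p=\alpha(\eta)$ and $p'=\alpha'(\eta')$ lie in the same connected component $\Omega_{M^{\perp}}^{\circ}$, which is connected, so we may join them by a path. The moduli space of marked $K3^{[2]}$-type pairs of type $M$ with fixed chamber $\mathcal{K}$ has a Hausdorff reduction on which the period map is an open embedding onto a dense open subset of $\Omega_{M^{\perp}}^{\circ}$ (this is where the refined Torelli theorem with K\"ahler cone is used); after a small perturbation to avoid the bad locus, the path in $\Omega_{M^{\perp}}^{\circ}$ lifts to a path of marked pairs joining $(X,\iota,\alpha)$ to $(X',\iota',\alpha')$, which is precisely a deformation. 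Hence $(X,\iota)$ and $(X',\iota')$ are deformation equivalent, proving injectivity.

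The main obstacle is the injectivity step, and specifically the non-Hausdorffness of the moduli space of marked irreducible holomorphic symplectic manifolds: two marked pairs with the same period point need only be bimeromorphic, so a path in $\Omega_{M^{\perp}}$ does not automatically lift to a genuine family through the prescribed manifolds. This is exactly why the K\"ahler-type chamber is part of the classifying data: fixing $\mathcal{K}$ rigidifies the K\"ahler cone and, via Markman's global Torelli theorem with K\"ahler cone, restores injectivity of the period map on the Hausdorff reduction. Carrying out the chamber analysis and handling the Mukai-flop ambiguities (as in Example \ref{e-2-9}) compatibly with the involution is the delicate part that requires care.
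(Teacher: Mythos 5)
The paper itself does not prove this lemma: its ``proof'' is a citation of Joumaah \cite[Theorem 9.11]{MR3519981}, so your proposal is really an attempted reconstruction of Joumaah's argument. Your well-definedness and injectivity sketches are broadly in line with that argument (local Torelli, connectedness of $\Omega^{\circ}_{M^{\perp}}$, generic injectivity of the period map once the chamber $\mathcal{K}$ is fixed), with two loose points: for well-definedness you still need to know that the image of the invariant K\"ahler cone never meets a wall $\delta^{\perp}$, $\delta\in\Delta(M)$ (this is where the fact that $\Delta(M)$ consists of wall/MBM classes enters, not just continuity and local finiteness); and in the path-lifting step, perturbing the path off the bad locus does not by itself connect the two given pairs --- you must first move $(X,\iota)$ and $(X',\iota')$ themselves into the good locus by small deformations (local Torelli) and then use bijectivity of the period map there.

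The genuine gap is in surjectivity. You claim that ``after composing $\alpha$ with a suitable element coming from $\Gamma(M)$ one arranges \dots\ that the resulting chamber equals $\mathcal{K}$.'' Replacing $\alpha$ by $\sigma\circ\alpha$ with $\sigma\in\Gamma(M)$ changes $\rho(X,\iota,\alpha)$ only by $\sigma|_M\in\Gamma_M$, i.e.\ it moves the chamber within its $\Gamma_M$-orbit --- exactly the ambiguity already quotiented out in the target $\ktm/\Gamma_M$. So your construction only realizes the one orbit that the chosen $X$ happens to produce and cannot reach an arbitrary prescribed class $[\mathcal{K}]$. Example \ref{e-2-35} shows this is a real obstruction: for $M=\mathbb{Z}h\oplus\mathbb{Z}e$ there are two orbits, realized respectively by $(Y^{[2]},\sigma^{[2]})$ and by its Mukai flop, and these are distinct deformation types, so no change of marking converts one into the other. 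The missing mechanism is precisely the passage through birational models: for a fixed very general period one must transport the involution to each Mukai flop of $X$ (via the global Torelli theorem) and show that the invariant K\"ahler cones of these models sweep out representatives of every class in $\ktm/\Gamma_M$. You mention Mukai-flop ambiguities only as a caveat for injectivity, but they are in fact the essential ingredient for surjectivity. (A minor point in the same step: for a very general period your $g$ fixes $\operatorname{NS}(X)$ pointwise and hence preserves the K\"ahler cone automatically, so composing with $\Gamma(M)$ is neither needed nor helpful there.)
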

	
	\begin{proof}
		See \cite[Theorem 9.11]{MR3519981}.
	\end{proof}
	
	Let $\mathcal{K} \in \ktm$ and fix $h \in \mathcal{K} \cap M$.
	Then $h^{\perp} = h^{\perp} \cap L_2$ is a lattice of signature $(2, 20)$ containing $M^{\perp}$, and $\Omega_{h^{\perp}}$ consists of two connected components.
	By \cite[(4.1)]{MR2964480}, the connected component $\mathfrak{M}_{L_2}^{\circ}$ determines the connected component $\Omega_{h^{\perp}}^+$ of $\Omega_{h^{\perp}}$.
	Namely, for any $(X, \alpha) \in \mathfrak{M}_{L_2}^{\circ}$ and $p = [\sigma] \in \Omega_{h^{\perp}}^+$, the orientation of $\tilde{\mathscr{C}}_{X} = \{ x \in H^2(X, \mathbb{R}) ; q_X(x)>0 \}$ determined by the \K cone of $X$ is compatible with 
	the orientation of $\tilde{\mathscr{C}_{L_2}} = \{ x \in L_{2, \mathbb{R}} ; x^2>0 \}$ determined by the real 3-dimensional vector space $W = \operatorname{Span}_{\mathbb{R}} \{ \operatorname{Re} \sigma, \operatorname{Im} \sigma, h \}$ associated to the basis $\{ \operatorname{Re} \sigma, \operatorname{Im} \sigma, h \}$ via the isomorphism $\tilde{\mathscr{C}}_X \cong \tilde{\mathscr{C}_{L_2}}$ induced from the marking $\alpha$.

	Let $\Omega^+_{M^{\perp}}$ be the connected component of $\Omega_{M^{\perp}}$ which satisfies $\Omega^+_{M^{\perp}} \subset \Omega^+_{h^{\perp}}$.
	Set 
	$$
		\mathfrak{M}_{M^{\perp}, \mathcal{K}} =\{ (X,\alpha) \in \mathfrak{M}^{\circ}_{L^2} ; \alpha(H^{2,0}(X)) \in \Omega^+_{M^{\perp}} \text{ and } \mathcal{K} \cap \alpha(\mathcal{K}_X) \neq \emptyset \},
	$$
	where $\mathcal{K}_X$ is the \K cone of the $K3^{[2]}$-type manifold $X$.
	We define a map $P_{\mathcal{K}} : \mathfrak{M}_{M^{\perp}, \mathcal{K}} \to \Omega^+_{M^{\perp}}$ by $P_{\mathcal{K}}(X, \alpha) = \alpha(H^{2,0}(X))$.
	
	We denote by $\tilde{\mathcal{M}}_{M, \mathcal{K}}$ the set of isomorphism classes of $K3^{[2]}$-type manifolds $(X, \iota)$ with involution of type $M$ such that $\rho(X, \iota) = [\mathcal{K}]$ in $\ktm/\Gamma_M$.
	
	\begin{dfn}\label{d-2-10}
		Let $(X, \iota) \in \tilde{\mathcal{M}}_{M, \mathcal{K}}$. 
		An isometry $\alpha : H^2(X, \mathbb{Z}) \to L_2$ is admissible for $(M, \mathcal{K})$ if $\alpha$ is an $M$-admissible marking and $\alpha(\mathcal{K}_X^{\iota}) \subset \mathcal{K}$,
		where $\mathcal{K}_X^{\iota}$ is the $\iota$-invariant \K cone of $X$.
	\end{dfn}
	
	For each $(X, \iota) \in \tilde{\mathcal{M}}_{M, \mathcal{K}}$, there exists an admissible marking for $(M, \mathcal{K})$.
	Moreover, if $\alpha, \beta : H^2(X, \mathbb{Z}) \to L_2$ are two admissible markings for $(M, \mathcal{K})$, 
	then we have $\alpha \circ \beta^{-1} \in \Gamma(\mathcal{K})$,
	where
	$$
		\Gamma(\mathcal{K}) =\{ \sigma \in \mon(L_2) ; \sigma \circ \iota_M = \iota_M \circ \sigma  \text{ and } \sigma(\mathcal{K})= \mathcal{K} \} =\{ \sigma \in \Gamma(M) ; \sigma(\mathcal{K})=\mathcal{K} \}.
	$$
	
	Let
	$$
		\Gamma_{M^{\perp}, \mathcal{K}} = \{ \sigma|_{M^{\perp}} \in O(M^{\perp}) ; \sigma \in \Gamma(\mathcal{K}) \}.
	$$ 
	For $g \in \Gamma_{M^{\perp}, \mathcal{K}}$, there exists $\sigma \in \Gamma(\mathcal{K})$ such that $\sigma|_{M^{\perp}} =g$.
	Since $\Gamma(\mathcal{K}) \subset \mon(L_2) = O^+(L_2)$, we have $sn_{\mathbb{R}}(\sigma) =+1$.
	On the other hand, since $(\sigma|_M)(\mathcal{K}) =\mathcal{K}$, we have $sn_{\mathbb{R}}(\sigma|_M) =+1$.
	Therefore, $sn_{\mathbb{R}}(g) = sn_{\mathbb{R}}(\sigma|_{M^{\perp}}) =+1$ and $g \in O^+(M^{\perp})$.
	Thus, $\Gamma_{M^{\perp}, \mathcal{K}}$ is contained in $O^+(M^{\perp})$.
	By \cite[Proposition 10.2]{MR3519981}, $\Gamma_{M^{\perp}, \mathcal{K}}$ is a finite index subgroup of $O^+(M^{\perp})$.
	By (\ref{al-2-A}), $O^+(M^{\perp})$ preserves $\Omega^+_{M^{\perp}}$.
	Therefore we obtain an orthogonal modular variety
	$$
		\mathcal{M}_{M, \mathcal{K}} = \Omega^+_{M^{\perp}}/\Gamma_{M^{\perp}, \mathcal{K}} .
	$$
	
	We define the period map $P_{M, \mathcal{K}} : \tilde{\mathcal{M}}_{M, \mathcal{K}} \to \mathcal{M}_{M, \mathcal{K}}$ by
	$$
		P_{M, \mathcal{K}}(X, \iota) = [\alpha(H^{2,0}(X))],
	$$
	where $\alpha : H^2(X, \mathbb{Z}) \to L_2$ is an admissible marking for $(M, \mathcal{K})$.
	
	Let 
	$$
		\Delta(M^{\perp}) =\{ \delta \in M^{\perp} ; \delta^2=-2, \text{or } \delta^2=-10, (\delta, L_2)=2\mathbb{Z} \},
	$$
	and set 
	$$
		\mathscr{D}_{M^{\perp}} = \bigcup_{\delta \in \Delta(M^{\perp})} H_{\delta} \subset \Omega_{M^{\perp}},
	$$
	where
	$$
		H_{\delta} = \{ x \in \Omega_{M^{\perp}} ; (x, \delta)=0 \}.
	$$
	By \cite[Lemma 7.7]{MR3519981}, $\mathscr{D}_{M^{\perp}}$ is locally finite in $\Omega_{M^{\perp}}$
	and is viewed as a reduced divisor on $\Omega_{M^{\perp}}$.
	Set $\bar{\mathscr{D}}_{M^{\perp}}= \mathscr{D}_{M^{\perp}}/\Gamma_{M^{\perp}, \mathcal{K}}$.
	Then $\bar{\mathscr{D}}_{M^{\perp}}$ is a reduced divisor on $\mathcal{M}_{M, \mathcal{K}}$.
	We set 
	$$
		\mathcal{M}^{\circ}_{M, \mathcal{K}} = \mathcal{M}_{M, \mathcal{K}} \setminus \bar{\mathscr{D}}_{M^{\perp}} \quad \text{  and  } \quad \Omega_{M^{\perp}}^{\circ} = \Omega^+_{M^{\perp}} \setminus \mathscr{D}_{M^{\perp}}.
	$$
	
	\begin{lem}\label{l-2-4}
		The image of $P_{M, \mathcal{K}}$ is $\mathcal{M}^{\circ}_{M, \mathcal{K}}$.
	\end{lem}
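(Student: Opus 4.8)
This is, in essence, \cite[Sections~9 and 10]{MR3519981}; I plan to reconstruct it from the surjectivity of the period map for $K3^{[2]}$-type manifolds, the global Torelli theorem, and the classification of wall divisors, treating the two inclusions separately. I will repeatedly use that for an involution $\iota_M$ with invariant lattice $M$ the $(-1)$-eigenspace is $M^{\perp}\otimes\mathbb{Q}$, so that $\iota_M$ acts by $-1$ on $M^{\perp}$.

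\emph{The inclusion $P_{M,\mathcal{K}}(\tilde{\mathcal{M}}_{M,\mathcal{K}})\subseteq\mathcal{M}^{\circ}_{M,\mathcal{K}}$.} Let $(X,\iota)\in\tilde{\mathcal{M}}_{M,\mathcal{K}}$ with an admissible marking $\alpha$ for $(M,\mathcal{K})$; by construction $[\alpha(H^{2,0}(X))]\in\mathcal{M}_{M,\mathcal{K}}$, so the point is that it avoids $\bar{\mathscr{D}}_{M^{\perp}}$. Since $X$ is projective (Lemma \ref{l-2-10}~(4)), the average of an ample class over $\langle\iota\rangle$ is an $\iota$-invariant Kähler class, so the invariant Kähler cone $\mathcal{K}^{\iota}_X$ is a nonempty open subcone of the Kähler cone $\mathcal{K}_X$ lying in the $\iota$-invariant part of $H^2(X,\mathbb{R})$. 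If $[\alpha(H^{2,0}(X))]\in\bar{\mathscr{D}}_{M^{\perp}}$, pick $\delta\in\Delta(M^{\perp})$ with $(\delta,\alpha(H^{2,0}(X)))=0$; then $d:=\alpha^{-1}(\delta)$ is a primitive class of $H^{1,1}(X,\mathbb{Z})$ with $d^2=-2$, or with $d^2=-10$ and $\mathrm{div}(d)=2$, hence a wall divisor of $X$ (see e.g.\ \cite[\S 7]{MR3519981}), so $(\omega,d)\neq 0$ for every $\omega$ in the open Kähler cone. On the other hand $\delta\in M^{\perp}$ forces $\iota^{*}d=-d$, and therefore $(\omega,d)=(\iota^{*}\omega,\iota^{*}d)=-(\omega,d)=0$ for every $\omega\in\mathcal{K}^{\iota}_X$, contradicting $\mathcal{K}^{\iota}_X\neq\emptyset$.

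\emph{The inclusion $\mathcal{M}^{\circ}_{M,\mathcal{K}}\subseteq P_{M,\mathcal{K}}(\tilde{\mathcal{M}}_{M,\mathcal{K}})$.} Fix $p=[\sigma]\in\Omega^{\circ}_{M^{\perp}}$. Surjectivity of the period map (\cite[\S 25]{MR1963559}) produces $(X_0,\alpha_0)\in\mathfrak{M}^{\circ}_{L_2}$ with $\alpha_0(H^{2,0}(X_0))=\sigma$. Put $g=\alpha_0^{-1}\circ\iota_M\circ\alpha_0\in O(H^2(X_0,\mathbb{Z}))$; since $\iota_M\in\mon(L_2)$ and $(X_0,\alpha_0)\in\mathfrak{M}^{\circ}_{L_2}$ we get $g\in\mon(X_0)$, and since $\iota_M$ acts by $-1$ on $M^{\perp}\ni\sigma$, $g$ is an involutive Hodge isometry whose invariant subspace $\alpha_0^{-1}(M_{\mathbb{R}})$ lies in $H^{1,1}(X_0,\mathbb{R})$ and is hyperbolic, hence meets the positive cone. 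Because $p\notin\mathscr{D}_{M^{\perp}}$, no wall divisor of $X_0$ is $g$-anti-invariant; using this together with the description of wall divisors one checks, as in \cite[\S 10]{MR3519981}, that the wall-and-chamber decomposition of the positive cone of $X_0$ restricts on $\alpha_0^{-1}(M_{\mathbb{R}})$ to the decomposition of $\tilde{\mathscr{C}}_M$ into Kähler-type chambers, and that $\mathcal{K}$ is the invariant slice of a $g$-invariant chamber $\mathcal{K}_0$ of that positive cone. By the global Torelli theorem (\cite[Theorem~1.3]{MR2964480}, \cite[\S 25]{MR1963559}) there is $(X,\alpha)\in\mathfrak{M}^{\circ}_{L_2}$ with $\alpha(H^{2,0}(X))=\sigma$ and $\alpha(\mathcal{K}_X)=\alpha_0(\mathcal{K}_0)$; then $\alpha^{-1}\circ\iota_M\circ\alpha\in\mon(X)$ is a Hodge isometry preserving $\mathcal{K}_X$, so it equals $\iota^{*}$ for a unique automorphism $\iota$ of $X$, which is an involution acting by $-1$ on $H^{2,0}(X)$. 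Thus $\iota$ is antisymplectic, $\alpha$ is an admissible marking for $(M,\mathcal{K})$ with $\alpha(\mathcal{K}^{\iota}_X)=\mathcal{K}$, so $(X,\iota)\in\tilde{\mathcal{M}}_{M,\mathcal{K}}$ and $P_{M,\mathcal{K}}(X,\iota)=p$.

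\emph{Main obstacle.} The technical core is the chamber-matching step in the second inclusion: showing that the restriction to $\alpha_0^{-1}(M_{\mathbb{R}})\cong M_{\mathbb{R}}$ of the wall-and-chamber decomposition of the positive cone of the $K3^{[2]}$-type manifold $X_0$ is exactly the decomposition of $\tilde{\mathscr{C}}_M$ by the hyperplanes $\delta^{\perp}$, $\delta\in\Delta(M)$, and that every Kähler-type chamber of $M$ --- the prescribed $\mathcal{K}$ in particular --- is the invariant slice of an honest $g$-invariant Kähler chamber. This is exactly where the hypothesis $p\notin\mathscr{D}_{M^{\perp}}$ (which rules out $g$-anti-invariant wall divisors, i.e.\ walls containing all of $\alpha_0^{-1}(M_{\mathbb{R}})$) and the precise form of $\Delta(M)$ are used, and it is the substance of Joumaah's analysis in \cite[\S\S 7--10]{MR3519981}. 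One should also be careful to invoke the global Torelli theorem in Markman's formulation, so that the monodromy operator preserving a Kähler class is induced by a genuine automorphism rather than only a bimeromorphism.
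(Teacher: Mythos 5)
Your proposal is correct and takes essentially the same route as the paper: the paper's entire proof is the citation to \cite[Lemma 9.5 and Proposition 9.9]{MR3519981}, and your two inclusions are precisely a reconstruction of those two results (the wall-divisor argument showing the period avoids $\bar{\mathscr{D}}_{M^{\perp}}$, and the surjectivity/Torelli argument with the chamber-matching step), with the technical core deferred to Joumaah exactly as the paper does.
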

	
	\begin{proof}
		See \cite[Lemma 9.5 and Proposition 9.9]{MR3519981}.
	\end{proof}
	
	The period map $P_{M, \mathcal{K}} : \tilde{\mathcal{M}}_{M, \mathcal{K}} \to \mathcal{M}^{\circ}_{M, \mathcal{K}}$ is not injective but generically injective.
	
	\begin{thm}\label{l-2-1-A}
		There exists a $\Gamma_{M^{\perp}, \mathcal{K}}$-invariant effective reduced divisor $\mathscr{D}_{\mathcal{K}}$ such that 
		$$
			P_{M, \mathcal{K}} : P_{M, \mathcal{K}}^{-1}( \mathcal{M}^{\circ}_{M, \mathcal{K}} \setminus \bar{\mathscr{D}}_{\mathcal{K}} ) \to \mathcal{M}^{\circ}_{M, \mathcal{K}} \setminus \bar{\mathscr{D}}_{\mathcal{K}}
		$$
		is bijective,
		where $\bar{\mathscr{D}}_{\mathcal{K}} = \mathscr{D}_{\mathcal{K}} / \Gamma_{M^{\perp}, \mathcal{K}}$.
	\end{thm}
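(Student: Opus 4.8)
The plan is to follow Joumaah's description of the fibres of the period map \cite{MR3519981}: combine Markman's Hodge-theoretic Torelli theorem \cite{MR2964480} with a lattice-theoretic analysis of wall divisors. The mechanism is that two non-isomorphic pairs with the same period force the N\'eron--Severi lattice to contain an ``extra'' wall divisor not lying in $M$ or $M^{\perp}$, and the presence of such a class confines the period to a locally finite union of Heegner divisors, which will be $\mathscr{D}_{\mathcal{K}}$.

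First I would set up the Torelli input. Let $(X,\iota),(X',\iota')\in\tilde{\mathcal{M}}_{M,\mathcal{K}}$ with $P_{M,\mathcal{K}}(X,\iota)=P_{M,\mathcal{K}}(X',\iota')=p=[\sigma]$, and pick admissible markings $\alpha,\alpha'$ for $(M,\mathcal{K})$. Since $p$ is one point of $\mathcal{M}_{M,\mathcal{K}}=\Omega^+_{M^{\perp}}/\Gamma_{M^{\perp},\mathcal{K}}$, there is $\gamma\in\Gamma(\mathcal{K})$ with $\gamma(\alpha(H^{2,0}(X)))=\alpha'(H^{2,0}(X'))$ in $\Omega^+_{M^{\perp}}$; replacing $\alpha$ by $\gamma\circ\alpha$ --- still an admissible marking for $(M,\mathcal{K})$, as $\gamma\in\mon(L_2)$ preserves $\mathfrak{M}^{\circ}_{L_2}$, commutes with $\iota_M$, and fixes $\mathcal{K}$ --- I may assume $\alpha(H^{2,0}(X))=\alpha'(H^{2,0}(X'))$. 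Then $\phi:=(\alpha')^{-1}\circ\alpha\colon H^2(X,\mathbb{Z})\to H^2(X',\mathbb{Z})$ is a parallel-transport operator which is a Hodge isometry, satisfies $\phi\circ\iota^*=(\iota')^*\circ\phi$, and obeys $\alpha'\circ\phi=\alpha$. By Markman's Torelli theorem, $\phi$ is induced by a unique isomorphism $h\colon X\xrightarrow{\ \sim\ }X'$ \emph{precisely when} $\phi(\mathcal{K}_X)=\mathcal{K}_{X'}$; and in that case $\iota\circ h^{-1}$ and $h^{-1}\circ\iota'$ induce the same isometry $\phi\circ\iota^*=(\iota')^*\circ\phi$, so by uniqueness $h\circ\iota=\iota'\circ h$, i.e.\ $(X,\iota)\cong(X',\iota')$. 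Thus it suffices to exhibit an effective reduced $\Gamma_{M^{\perp},\mathcal{K}}$-invariant divisor $\mathscr{D}_{\mathcal{K}}$ such that $p\notin\bar{\mathscr{D}}_{\mathcal{K}}$ implies $\phi(\mathcal{K}_X)=\mathcal{K}_{X'}$; surjectivity onto $\mathcal{M}^{\circ}_{M,\mathcal{K}}\setminus\bar{\mathscr{D}}_{\mathcal{K}}$ is then immediate from Lemma \ref{l-2-4}.

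Next I would transport the picture to $L_2$. Because $\alpha'\circ\phi=\alpha$, the isometry $\phi$ becomes the identity of $L_2$, both $NS(X)$ and $NS(X')$ become $\sigma^{\perp}\cap L_2$, and $\alpha(\mathcal{K}_X)$, $\alpha'(\mathcal{K}_{X'})$ become two connected components $C,C'$ of $\tilde{\mathscr{C}}_{\sigma^{\perp}\cap L_2}\setminus\bigcup_{\delta}\delta^{\perp}$, where $\delta$ runs over the (primitive) wall divisors, i.e.\ the $\delta\in\sigma^{\perp}\cap L_2$ with $\delta^2=-2$, or $\delta^2=-10$ and $(\delta,L_2)=2\mathbb{Z}$ (by the K\"ahler-cone description of Bayer--Hassett--Tschinkel and Mongardi, which also underlies the definition of $\Delta(M)$). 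Admissibility makes $\alpha(\mathcal{K}_X^{\iota})\subset C$ and $\alpha'(\mathcal{K}_{X'}^{\iota'})\subset C'$ nonempty open subsets of $M_{\mathbb{R}}$ contained in $\mathcal{K}$. Suppose $\phi(\mathcal{K}_X)\neq\mathcal{K}_{X'}$, i.e.\ $C\neq C'$. As $\mathcal{K}$ is open and connected in $M_{\mathbb{R}}$ and the wall arrangement is locally finite, a path inside $\mathcal{K}$ joining a point of $C$ to a point of $C'$ must cross some $\delta^{\perp}$, so $\delta^{\perp}\cap\mathcal{K}\neq\emptyset$ for a wall divisor $\delta\in\sigma^{\perp}\cap L_2$. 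Here $\delta\notin M$ (otherwise $\delta\in\Delta(M)$ and $\delta^{\perp}\cap\mathcal{K}=\emptyset$ by the very definition of a K\"ahler-type chamber), and $\delta\notin M^{\perp}$ (otherwise $\delta\in\Delta(M^{\perp})$ with $(\sigma,\delta)=0$, so $p\in\bar{\mathscr{D}}_{M^{\perp}}$, against $p\in\mathcal{M}^{\circ}_{M,\mathcal{K}}$). Write $\delta=\delta_M+\delta_{M^{\perp}}$ with $\delta_M\in M_{\mathbb{Q}}\setminus\{0\}$ and $\delta_{M^{\perp}}\in M^{\perp}_{\mathbb{Q}}\setminus\{0\}$ the orthogonal projections. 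Since $\delta^{\perp}\cap M_{\mathbb{R}}=\delta_M^{\perp}\cap M_{\mathbb{R}}$ contains a point of $\mathcal{K}$, hence a vector of positive square, hyperbolicity of $M$ forces $\delta_M^2<0$; moreover $\delta_{M^{\perp}}$ lies in the negative-definite part $\sigma^{\perp}\cap M^{\perp}_{\mathbb{R}}$ of $M^{\perp}_{\mathbb{R}}$, so $\delta_{M^{\perp}}^2<0$, and from $\delta_M^2+\delta_{M^{\perp}}^2=\delta^2\in\{-2,-10\}$ we obtain $\delta_{M^{\perp}}^2\in(-10,0)$. Finally $(\sigma,\delta_{M^{\perp}})=(\sigma,\delta)=0$, so $p\in H_{\delta_{M^{\perp}}}$, and $\delta_{M^{\perp}}\in(M^{\perp})^{\vee}$ since $(\delta_{M^{\perp}},v)=(\delta,v)\in\mathbb{Z}$ for all $v\in M^{\perp}$.

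Finally I would set $\mathscr{D}_{\mathcal{K}}=\bigcup H_{\delta_{M^{\perp}}}\subset\Omega^+_{M^{\perp}}$, the union over all primitive $\delta\in L_2$ with $\delta^2=-2$ or ($\delta^2=-10$, $(\delta,L_2)=2\mathbb{Z}$), $\delta\notin M\cup M^{\perp}$, and $\delta_M^{\perp}\cap\mathcal{K}\neq\emptyset$, and $\bar{\mathscr{D}}_{\mathcal{K}}=\mathscr{D}_{\mathcal{K}}/\Gamma_{M^{\perp},\mathcal{K}}$. Each $\delta_{M^{\perp}}$ occurring here lies in the fixed lattice $(M^{\perp})^{\vee}$ and, rescaled to a primitive vector of $M^{\perp}$, has square in a fixed finite set (the bound $\delta_{M^{\perp}}^2\in(-10,0)$ turning into a bound in terms of the exponent of $A_{M^{\perp}}$); hence, by the local finiteness of Heegner divisors of bounded norm (as in \cite[Lemma 7.7]{MR3519981}), $\mathscr{D}_{\mathcal{K}}$ is a locally finite union of hyperplane sections, i.e.\ an effective reduced divisor, and it is $\Gamma_{M^{\perp},\mathcal{K}}$-invariant because any lift in $\Gamma(\mathcal{K})\subset\mon(L_2)$ preserves the wall divisors, the sublattices $M$ and $M^{\perp}$, and the chamber $\mathcal{K}$, and commutes with the orthogonal projections. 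By the previous step, $p\notin\bar{\mathscr{D}}_{\mathcal{K}}$ forces $C=C'$, hence $\phi(\mathcal{K}_X)=\mathcal{K}_{X'}$, hence $(X,\iota)\cong(X',\iota')$, which gives injectivity over $\mathcal{M}^{\circ}_{M,\mathcal{K}}\setminus\bar{\mathscr{D}}_{\mathcal{K}}$; with Lemma \ref{l-2-4} this proves the bijectivity. I expect the delicate points to be the Torelli bookkeeping of the first step --- the precise form of Markman's theorem for $\kk$-type, including uniqueness of $h$, and the deduction of $\iota$-equivariance --- together with a fully rigorous verification that the family $\{H_{\delta_{M^{\perp}}}\}$ is locally finite.
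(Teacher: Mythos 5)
The paper offers no internal argument for this theorem: its ``proof'' is the citation \cite[Theorem 10.5]{MR3519981}, and your write-up is essentially a reconstruction of Joumaah's own strategy (Markman's Hodge-theoretic Torelli theorem \cite{MR2964480} plus a wall-divisor/chamber analysis showing that two non-isomorphic pairs over the same period force a wall class $\delta\notin M\cup M^{\perp}$ whose $M^{\perp}$-projection has bounded negative square). The logic is sound; I only flag three points to tighten. First, in your final definition of $\mathscr{D}_{\mathcal{K}}$ the bound $\delta_{M^{\perp}}^2\in(-10,0)$, which is what feeds the Heegner-type local finiteness argument, was derived in the previous step using $\delta\in\sigma^{\perp}$, while the indexing set for $\mathscr{D}_{\mathcal{K}}$ no longer refers to a period; either add the condition $\delta_{M^{\perp}}^2<0$ to the definition, or note that $H_v=\emptyset$ whenever $v^2\geqq 0$ (such a $v$ would be orthogonal to the positive $2$-plane spanned by $\operatorname{Re}\eta$ and $\operatorname{Im}\eta$, which sits in a negative definite complement), so the offending terms contribute nothing. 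Second, the K\"ahler cone is a chamber of the wall-divisor arrangement inside the positive cone of $H^{1,1}(X,\mathbb{R})$, not of $NS(X)_{\mathbb{R}}$; your path argument survives because the walls are $\delta^{\perp}$ for integral $(1,1)$-classes and the path lies in $M_{\mathbb{R}}\subset H^{1,1}(X,\mathbb{R})$, but the chambers $C,C'$ should be taken in the larger space. Third, the ``uniqueness'' you invoke to make $h$ equivariant is really the faithfulness of $\operatorname{Aut}(X)\to O(H^2(X,\mathbb{Z}))$ for $K3^{[n]}$-type (Beauville), and the replacement of $\alpha$ by $\gamma\circ\alpha$ staying in $\mathfrak{M}^{\circ}_{L_2}$ uses that elements of $\mon(L_2)$ preserve the connected component under the change-of-marking action; both are standard but should be cited explicitly, after which your deduction that $\iota\circ f=f\circ\iota'$ is correct.
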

	
	\begin{proof}
		See \cite[Theorem 10.5]{MR3519981}.
	\end{proof}
	
	Let $f : (\mathscr{X}, \iota) \to S$ be a family of $K3^{[2]}$-type manifolds with involution of type $(M, \mathcal{K})$.
	We define the period map $P_{M, \mathcal{K}} : S \to \mathcal{M}_{M, \mathcal{K}}$ by
	$$
		P_{M, \mathcal{K}}(s) = P_{M, \mathcal{K}}(X_s, \iota_s), \qquad (s \in S). 
	$$
	Since $f_*\Omega^2_{\xs}$ is a holomorphic vector subbundle of the flat bundle $R^2f_*\mathbb{C} \otimes \mathcal{O}_{S}$, $P_{M, \mathcal{K}}$ is holomorphic (\cite[22.3]{MR1963559}).
	
	Let $(X, \iota), (X', \iota') \in \tilde{\mathcal{M}}_{M, \mathcal{K}}$.
	We call $(X, \iota)$ and $(X', \iota')$ inseparable if their universal deformations $\pi : (\mathscr{X}, \iota) \to \operatorname{Def}(X, \iota)$ and $\pi' : (\mathscr{X}', \iota') \to \operatorname{Def}(X', \iota')$ contain isomorphic fibers,
	where $\operatorname{Def}(X, \iota)$ and $\operatorname{Def}(X', \iota')$ are viewed as germs.
	
	\begin{lem}\label{l-2-5}
		If $(X, \iota), (X', \iota') \in \tilde{\mathcal{M}}_{M, \mathcal{K}}$ satisfy $P_{M, \mathcal{K}}(X, \iota) = P_{M, \mathcal{K}}(X', \iota')$, then $(X, \iota)$ and $(X', \iota')$ are inseparable.
	\end{lem}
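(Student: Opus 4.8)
The plan is to reduce the lemma to Joumaah's injectivity statement (Theorem~\ref{l-2-1-A}) by producing a common, sufficiently generic, value of the two local period maps. Put $p:=P_{M,\mathcal{K}}(X,\iota)=P_{M,\mathcal{K}}(X',\iota')$ and let $\pi:(\mathscr{X},\iota)\to\operatorname{Def}(X,\iota)$ and $\pi':(\mathscr{X}',\iota')\to\operatorname{Def}(X',\iota')$ be the Kuranishi families. First I would check that, after shrinking the bases around $0$, every fibre of $\pi$ and of $\pi'$ again lies in $\tilde{\mathcal{M}}_{M,\mathcal{K}}$: a marking $\alpha$ extends over the contractible Kuranishi base, the relation $\alpha_s\circ\iota_s^{*}=\iota_M\circ\alpha_s$ is rigid, so the invariant lattice stays $M$, and the invariant \K cone varies continuously and hence remains inside the \emph{open} K\"ahler-type chamber $\mathcal{K}$ for $s$ near $0$. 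Consequently the period maps $P_{M,\mathcal{K}}:\operatorname{Def}(X,\iota)\to\mathcal{M}_{M,\mathcal{K}}$ and $P_{M,\mathcal{K}}:\operatorname{Def}(X',\iota')\to\mathcal{M}_{M,\mathcal{K}}$ of the Kuranishi families are defined and holomorphic.

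Next I would argue that each of these two maps is a local isomorphism onto an open neighborhood of $p$. By the local Torelli theorem for irreducible holomorphic symplectic manifolds (see e.g.\ \cite{MR1963559}), the period map of the Kuranishi family $\operatorname{Def}(X)$ into its period domain is a local isomorphism; it is equivariant for the involution $J$ of $\operatorname{Def}(X)$ and the involution of the period domain induced by $\iota_M$, whose fixed locus is the domain $\Omega_{M^{\perp}}$ attached to $M^{\perp}$ (a fixed period $[\sigma]$ has $\sigma\in M_{\mathbb{C}}$ or $\sigma\in M^{\perp}_{\mathbb{C}}$, the former being impossible as $M$ is hyperbolic). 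Restricting to fixed loci shows that $\operatorname{Def}(X,\iota)\to\Omega^{+}_{M^{\perp}}$ is a local isomorphism, consistently with $\dim\operatorname{Def}(X,\iota)=\frac{21-t}{2}$ (Lemma~\ref{l-2-12}(3)) and $\dim\Omega^{+}_{M^{\perp}}=\operatorname{rk}(M^{\perp})-2=\frac{21-t}{2}$ (one has $\operatorname{rk}(M)=\frac{21+t}{2}$). Composing with the generically \'etale quotient $\Omega^{+}_{M^{\perp}}\to\mathcal{M}_{M,\mathcal{K}}$, the image $V$ of $P_{M,\mathcal{K}}|_{\operatorname{Def}(X,\iota)}$ and the image $V'$ of $P_{M,\mathcal{K}}|_{\operatorname{Def}(X',\iota')}$ are open neighborhoods of $p$.

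Finally, since $p\in\mathcal{M}^{\circ}_{M,\mathcal{K}}$ by Lemma~\ref{l-2-4} and $\bar{\mathscr{D}}_{M^{\perp}}\cup\bar{\mathscr{D}}_{\mathcal{K}}$ (with $\bar{\mathscr{D}}_{\mathcal{K}}$ as in Theorem~\ref{l-2-1-A}) is a proper analytic subset of $\mathcal{M}_{M,\mathcal{K}}$, the non-empty open set $V\cap V'$ contains a point $q\in\mathcal{M}^{\circ}_{M,\mathcal{K}}\setminus\bar{\mathscr{D}}_{\mathcal{K}}$. Choose $s\in\operatorname{Def}(X,\iota)$ and $s'\in\operatorname{Def}(X',\iota')$ with $P_{M,\mathcal{K}}(X_s,\iota_s)=P_{M,\mathcal{K}}(X'_{s'},\iota'_{s'})=q$; by the first step both $(X_s,\iota_s)$ and $(X'_{s'},\iota'_{s'})$ belong to $\tilde{\mathcal{M}}_{M,\mathcal{K}}$. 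The injectivity of $P_{M,\mathcal{K}}$ over $\mathcal{M}^{\circ}_{M,\mathcal{K}}\setminus\bar{\mathscr{D}}_{\mathcal{K}}$ (Theorem~\ref{l-2-1-A}) then forces $(X_s,\iota_s)\cong(X'_{s'},\iota'_{s'})$; since these are fibres of the Kuranishi families of $(X,\iota)$ and of $(X',\iota')$, the two pairs are inseparable.

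I expect the first step to be the main obstacle: one must know that small invariant deformations of a member of $\tilde{\mathcal{M}}_{M,\mathcal{K}}$ stay of type $(M,\mathcal{K})$, so that the period map is genuinely defined on a neighborhood of $0$ in $\operatorname{Def}(X,\iota)$ with values in $\mathcal{M}_{M,\mathcal{K}}$; this rests on the deformation invariance of the invariant lattice and on the openness of K\"ahler-type chambers together with continuity of the invariant \K cone, all available from Joumaah~\cite{MR3519981}. The remaining two steps are then routine: the second is local Torelli plus a dimension count, and the third a direct application of Theorem~\ref{l-2-1-A}.
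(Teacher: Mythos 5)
The paper does not actually prove this lemma: its ``proof'' is a citation of Joumaah \cite{MR3519981} (Proposition 10.7). Your argument is therefore a genuinely different, essentially self-contained route, and it is sound in outline: you reduce the statement to the generic injectivity of Theorem \ref{l-2-1-A} by showing that the two local period maps $\operatorname{Def}(X,\iota)\to\mathcal{M}_{M,\mathcal{K}}$ and $\operatorname{Def}(X',\iota')\to\mathcal{M}_{M,\mathcal{K}}$ are open at the base points (local Torelli for the $J$-fixed locus plus the dimension count $\dim\operatorname{Def}(X,\iota)=\tfrac{21-t}{2}=\dim\Omega^{+}_{M^{\perp}}$, and openness of the quotient map), and then you pick a common value $q$ off $\bar{\mathscr{D}}_{M^{\perp}}\cup\bar{\mathscr{D}}_{\mathcal{K}}$, which exists because that set is a proper analytic subset; since this works for arbitrarily small representatives of the Kuranishi germs, the isomorphic fibres occur arbitrarily close to $0$, which is exactly what the germ-level definition of inseparability asks. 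What the author's citation buys is precisely the step you flag as the main obstacle, and there your justification is slightly off: the invariant K\"ahler cone does \emph{not} vary continuously in a family (it typically jumps up at special fibres), so ``continuity of the cone'' is not a valid reason for $\alpha_s(\mathcal{K}^{\iota_s}_{X_s})\subset\mathcal{K}$. The correct and sufficient argument is: (a) one $\iota$-invariant K\"ahler class on $X$ with image in $\mathcal{K}$ deforms to $\iota_s$-invariant K\"ahler classes on nearby fibres (Kodaira--Spencer stability of K\"ahler metrics, averaged over $\iota_s$), whose images under the flat marking $\alpha_s$ remain in the open chamber $\mathcal{K}$; (b) by Joumaah, $\alpha_s(\mathcal{K}^{\iota_s}_{X_s})$ is a convex, hence connected, subset of $\tilde{\mathscr{C}}_M\setminus\bigcup_{\delta\in\Delta(M)}\delta^{\perp}$ --- the fact already used in the paper when $\rho(X_s,\iota_s,\alpha_s)$ is defined --- so containing one point of $\mathcal{K}$ forces it to lie entirely in $\mathcal{K}$. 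With that repair the induced marking $\alpha_s$ is admissible for $(M,\mathcal{K})$, every nearby fibre lies in $\tilde{\mathcal{M}}_{M,\mathcal{K}}$ and its period in $\mathcal{M}_{M,\mathcal{K}}$ is computed by $\alpha_s$, and your application of Theorem \ref{l-2-1-A} at $q$ goes through; so your proposal gives a legitimate alternative to the citation, at the cost of importing the same two Joumaah inputs (wall-avoidance of the invariant K\"ahler cone and Theorem \ref{l-2-1-A}) rather than his Proposition 10.7 itself.
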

	
	\begin{proof}
		See \cite[Proposition 10.7]{MR3519981}.
	\end{proof}

\subsection{Relations of orthogonal modular varieties}\label{ss-2-4}
	
	Let $M_0$ be a primitive hyperbolic 2-elementary sublattice of $L_{K3}$.
	Since $L_{K3}$ is unimodular and since $M_0$ is 2-elementary, the involution
	$$
		M_0 \oplus M_0^{\perp} \to M_0 \oplus M_0^{\perp}, \quad (m,n) \mapsto (m,-n) 
	$$
	extends uniquely to an involution $\iota_{M_0} \in O(L_{K3})$ by \cite[Corollary 1.5.2]{Nikulin1980IntegralSB}.
	
	Let $Y$ be a K3 surface and $\sigma : Y \to Y$ be an antisymplectic involution on $Y$.
	Set
	$$
		H^2(Y, \mathbb{Z})^{\sigma} = \left\{ x \in H^2(Y, \mathbb{Z}) ; \sigma^*x = x \right\}.
	$$
	Let $\alpha : H^2(Y, \mathbb{Z}) \to L_{K3}$ be an isometry.
	We call the pair $(Y, \alpha)$ a 2-elementary K3 surface of type $M_0$ if the restriction of $\alpha$ is an isometry from $H^2(Y, \mathbb{Z})^{\sigma}$ to $M_0$.
	
	Since $\sign(M_0^{\perp}) =(2, \rk M_0^{\perp}-2)$, $\Omega_{M_0^{\perp}}$ consists of two connected components.
	We fix a connected component $\Omega^+_{M_0^{\perp}}$ of $\Omega_{M_0^{\perp}}$.
	By (\ref{al-2-A}), $O^+(M_0^{\perp})$ acts on $\Omega^+_{M_0^{\perp}}$ projectively.
	We obtain the orthogonal modular variety 
	$$
		\mathcal{M}_{M_0}=\Omega^+_{M_0^{\perp}}/O^+(M_0^{\perp})
	$$
	of dimension $20-\rk(M_0)$.
	
	We set
	$
		\Delta(M_0^{\perp}) =\{ d \in M_0^{\perp} ; d^2=-2 \},
	$
	and
	$$
		\mathscr{D}_{M_0^{\perp}} = \bigcup_{d \in \Delta(M_0^{\perp})} d^{\perp} \subset \Omega^+_{M_0^{\perp}}.
	$$
	By \cite[Proposition 1.9.]{MR2047658}, $\mathscr{D}_{M_0^{\perp}}$ is locally finite and is viewed as a reduced divisor on $\Omega^+_{M_0^{\perp}}$.
	Set 
	$$
		\bar{\mathscr{D}}_{M_0^{\perp}} = \mathscr{D}_{M_0^{\perp}} / O^+(M_0^{\perp}) \quad \text{ and } \quad \mathcal{M}^{\circ}_{M_0} = \mathcal{M}_{M_0} \setminus \bar{\mathscr{D}}_{M_0^{\perp}}.
	$$
	
	\begin{lem}\label{l-2-6}
		The Zariski open subset $\mathcal{M}^{\circ}_{M_0}$ is a coarse moduli space of 2-elementary K3 surfaces of type $M_0$. 
	\end{lem}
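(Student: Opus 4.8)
This lemma is essentially a result of Yoshikawa \cite[\S 1]{MR2047658}, resting on the global Torelli theorem for K3 surfaces together with Nikulin's analysis of involutions; the plan is to recall that argument. Three things have to be established: that the period assignment $(Y,\sigma)\mapsto[\alpha(H^{2,0}(Y))]$ is well defined, takes values in $\mathcal{M}^{\circ}_{M_0}$, and induces a bijection from isomorphism classes of $2$-elementary K3 surfaces of type $M_0$ onto the points of $\mathcal{M}^{\circ}_{M_0}$; that for any family of such surfaces the induced map on bases is a holomorphic map to $\mathcal{M}^{\circ}_{M_0}$; and that $\mathcal{M}^{\circ}_{M_0}$ is universal with these properties.

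First I would treat the bijection. For a $2$-elementary K3 surface $(Y,\sigma)$ of type $M_0$ with holomorphic symplectic form $\eta$, the relation $\sigma^{*}\eta=-\eta$ forces $\eta\in(H^{2}(Y,\mathbb{Z})^{\sigma})^{\perp}\otimes\mathbb{C}$, which $\alpha$ carries isometrically onto $M_0^{\perp}\otimes\mathbb{C}$; hence $\alpha(H^{2,0}(Y))$ lies in $\Omega_{M_0^{\perp}}$, and normalizing the marking it lies in $\Omega^{+}_{M_0^{\perp}}$. It avoids $\mathscr{D}_{M_0^{\perp}}$: if $(\alpha(\eta),d)=0$ for some $d\in M_0^{\perp}$ with $d^{2}=-2$, then $\delta:=\alpha^{-1}(d)$ is a $(1,1)$-class of self-intersection $-2$ lying in $(H^{2}(Y,\mathbb{Z})^{\sigma})^{\perp}$, hence with $\sigma^{*}\delta=-\delta$, so by Riemann--Roch one of $\pm\delta$ is effective and therefore so is its image $\mp\delta$ under the automorphism $\sigma$, which is impossible for $\delta\neq 0$. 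Independence of the admissible marking follows from the global Torelli theorem, the relevant change of marking lying in $\mon(L_{K3})=O^{+}(L_{K3})$. For injectivity I would take two surfaces with periods in one $O^{+}(M_0^{\perp})$-orbit and — using that $M_0$ is a primitive $2$-elementary hyperbolic sublattice of the unimodular lattice $L_{K3}$, so that isometries of $M_0^{\perp}$ extend to $L_{K3}$ commuting with $\iota_{M_0}$ after correction by a suitable isometry of $M_0$ — produce a Hodge isometry $H^{2}(Y,\mathbb{Z})\to H^{2}(Y',\mathbb{Z})$ intertwining $\sigma^{*}$ and $\sigma'^{*}$; after adjusting it by the Weyl group so that it preserves Kähler cones, the global Torelli theorem realizes it by an isomorphism $(Y,\sigma)\cong(Y',\sigma')$. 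For surjectivity I would start from a point of $\Omega^{+}_{M_0^{\perp}}\setminus\mathscr{D}_{M_0^{\perp}}$, realize it as the period of a marked K3 surface $(Y,\alpha)$ via surjectivity of the K3 period map, transport $\iota_{M_0}$ through $\alpha^{-1}$ to a Hodge involution of $H^{2}(Y,\mathbb{Z})$ with invariant lattice $M_0$, and — since the period misses $\mathscr{D}_{M_0^{\perp}}$, so that the Kähler cone is invariant under it — invoke the strong Torelli theorem to realize it by an antisymplectic involution $\sigma$; then $(Y,\sigma)$ has the prescribed period.

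Next I would handle families and universality. Given a family $f\colon\mathcal{Y}\to S$ of $2$-elementary K3 surfaces of type $M_0$, I would mark $R^{2}f_{*}\mathbb{Z}$ over an open covering of $S$, take periods to obtain holomorphic maps into $\Omega^{+}_{M_0^{\perp}}$, and glue them to a holomorphic map $S\to\mathcal{M}^{\circ}_{M_0}$ inducing the bijection above on points. For universality I would use the local Torelli theorem for K3 surfaces with involution — the analogue of Lemma \ref{l-2-12}(3), namely that $\operatorname{Def}(Y,\sigma)$ is smooth and its period map is an open embedding into $\Omega_{M_0^{\perp}}$ — which exhibits $\mathcal{M}^{\circ}_{M_0}$ étale-locally as the quotient of a deformation space by a finite group; combined with the bijection on points this yields the universal property of a coarse moduli space.

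The step I expect to be the real obstacle is the Weyl-group correction in the equivariant Torelli argument used for both injectivity and surjectivity: the Hodge isometries produced by lattice theory only respect the positive cone, and one must replace them by compositions with reflections in $(-2)$-classes so as to match the Kähler cones while still commuting with the involution. This is precisely where the hypothesis that the period avoids $\mathscr{D}_{M_0^{\perp}}$ — equivalently, that the anti-invariant part of the N\'eron--Severi lattice contains no $(-2)$-class — is indispensable.
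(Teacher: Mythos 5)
The paper does not prove this lemma at all: it simply cites Yoshikawa \cite[Theorem 1.8]{MR2047658}, which is exactly the result you identify, and your sketch (period point avoiding $\mathscr{D}_{M_0^{\perp}}$ via the effective $(-2)$-class argument, injectivity and surjectivity via Nikulin's extension of isometries plus the strong and surjective Torelli theorems with a Weyl-group correction, then local Torelli for the coarse moduli property) is a faithful reconstruction of that cited argument. So your proposal is correct and follows essentially the same route as the paper, which defers entirely to Yoshikawa.
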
 
	
	\begin{proof}
		See \cite[Theorem 1.8.]{MR2047658}
	\end{proof}
	
	Set $\Delta(M_0) =\{ d \in M_0 ; d^2=-2 \}$, and $\tilde{\mathscr{C}}_{M_0} = \{ x \in M_{0,{\mathbb{R}}} ; x^2>0 \}$.
	As before, the set of connected components of $\tilde{\mathscr{C}}_{M_0} \setminus  \bigcup_{d \in \Delta(M_0)} d^{\perp}$ is denoted by $\ktmo$.
	
	Let $\mathcal{K}_0 \in \ktmo$.
	Set 
	$$
		\Gamma(\mathcal{K}_0) = \{ \sigma \in \mon(L_{K3}); \sigma \circ \iota_{M_0} = \iota_{M_0} \circ \sigma, \text{and } \sigma(\mathcal{K}_0) =\mathcal{K}_0 \},
	$$
	and 
	$$
		\Gamma_{M_0^{\perp}, \mathcal{K}_0} =\{ \sigma|_{M_0^{\perp}} \in O(M_0^{\perp}); \sigma \in \Gamma(\mathcal{K}_0) \}.
	$$
	Since $\Gamma(\mathcal{K}_0) \subset \mon(L_{K3})$, we have $\Gamma_{M_0^{\perp}, \mathcal{K}_0} \subset O^+(M_0^{\perp})$.
	It is a finite index subgroup of $O^+(M_0^{\perp})$.
	
	\begin{lem}\label{l-2-7}
		For any $\mathcal{K}_0 \in \ktmo$, we have $\Gamma_{M_0^{\perp}, \mathcal{K}_0} = O^+(M_0^{\perp})$.
	\end{lem}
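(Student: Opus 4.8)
The plan is to establish the nontrivial inclusion $O^+(M_0^\perp)\subseteq\Gamma_{M_0^\perp,\mathcal{K}_0}$, the reverse inclusion being already proved above. Fix $g\in O^+(M_0^\perp)$; I want to produce $\sigma\in\mon(L_{K3})=O^+(L_{K3})$ with $\sigma\circ\iota_{M_0}=\iota_{M_0}\circ\sigma$, $\sigma|_{M_0^\perp}=g$ and $\sigma(\mathcal{K}_0)=\mathcal{K}_0$, which gives $g\in\Gamma_{M_0^\perp,\mathcal{K}_0}$. Since $\iota_{M_0}$ acts by $+1$ on $M_0$ and by $-1$ on $M_0^\perp$, an isometry of $L_{K3}$ commutes with $\iota_{M_0}$ exactly when it preserves both $M_0$ and $M_0^\perp$, and such an isometry is the unique extension to $L_{K3}$ of a pair $(h,g)\in O(M_0)\times O(M_0^\perp)$. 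By \cite[Corollary 1.5.2]{Nikulin1980IntegralSB} the unimodularity of $L_{K3}$ yields a canonical anti-isometry $\gamma\colon A_{M_0}\to A_{M_0^\perp}$, and $(h,g)$ extends to $L_{K3}$ precisely when the induced isometries of discriminant forms satisfy $\bar h=\gamma^{-1}\circ\bar g\circ\gamma$. Since the real spinor norm is multiplicative, the extension $\sigma$ lies in $O^+(L_{K3})$ iff $sn_{\mathbb R}(h)=sn_{\mathbb R}(g)=+1$, and the condition $\sigma(\mathcal{K}_0)=\mathcal{K}_0$ concerns only $h$ because $\mathcal{K}_0\subset M_{0,\mathbb R}$. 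So it is enough to find $h\in O(M_0)$ with $\bar h=\gamma^{-1}\bar g\gamma$, $sn_{\mathbb R}(h)=+1$ and $h(\mathcal{K}_0)=\mathcal{K}_0$.

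First I would pick any $h_1\in O(M_0)$ with $\bar h_1=\gamma^{-1}\bar g\gamma$; this is possible because $O(M_0)\to O(q_{M_0})$ is surjective. By \cite[Theorem 1.14.2]{Nikulin1980IntegralSB} this surjectivity holds as soon as $M_0$ is indefinite and $\rk(M_0)\ge l(M_0)+2$; since $M_0$ is $2$-elementary one has $\rk(M_0)\equiv l(M_0)\pmod 2$, so the only borderline case is $\rk(M_0)=l(M_0)$, which for a primitive $2$-elementary hyperbolic sublattice of $L_{K3}$ occurs only for finitely many isometry classes, each checked by hand. (Alternatively this surjectivity is exactly what enters the injectivity of the period map in \cite[Theorem 1.8]{MR2047658}.) Now I correct $h_1$: as $A_{M_0}$ is $2$-torsion, $-\operatorname{id}_{M_0}$ and every reflection $s_\delta$ $(\delta\in\Delta(M_0))$ act trivially on $q_{M_0}$, while $sn_{\mathbb R}(s_\delta)=+1$ and $sn_{\mathbb R}(-\operatorname{id}_{M_0})=-1$ because $M_0$ has signature $(1,\rk(M_0)-1)$. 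Replacing $h_1$ by $-h_1$ if needed, we may assume $sn_{\mathbb R}(h_1)=+1$, so $h_1$ preserves the two components of $\tilde{\mathscr{C}}_{M_0}$ and sends $\mathcal{K}_0$ to a chamber in the same component. The Weyl group $W_{M_0}$ generated by the $s_\delta$ acts simply transitively on the set of chambers in one component of $\tilde{\mathscr{C}}_{M_0}$, so there is $w\in W_{M_0}$ with $w(h_1(\mathcal{K}_0))=\mathcal{K}_0$; set $h=wh_1$. Then $\bar h=\bar h_1=\gamma^{-1}\bar g\gamma$, $sn_{\mathbb R}(h)=+1$ and $h(\mathcal{K}_0)=\mathcal{K}_0$, and the extension $\sigma$ of $(h,g)$ does the job.

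The one step that is not purely formal is the surjectivity of $O(M_0)\to O(q_{M_0})$ in the borderline range $l(M_0)=\rk(M_0)$ --- for instance $M_0\cong U(2)$, $\langle 2\rangle\oplus\langle-2\rangle^{\oplus 2}$, or $U(2)\oplus E_8(2)$ --- everything else being formal from Nikulin's gluing, from $A_{M_0}$ being $2$-elementary, and from the classical simple transitivity of the Weyl group on chambers. I expect these finitely many cases to be dispatched using Nikulin's classification of $2$-elementary discriminant forms (and the relations among $u_1$, $v_1$, $w_{2,1}^{\pm1}$), or else by simply invoking that the claim is implicit in Yoshikawa's construction of the coarse moduli space $\mathcal{M}^{\circ}_{M_0}$.
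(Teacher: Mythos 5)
Your argument has the same overall skeleton as the paper's proof, and its second half coincides with it in substance: your spinor-norm correction by $-\operatorname{id}_{M_0}$ (which glues with $\operatorname{id}_{M_0^{\perp}}$ precisely because $A_{M_0}$ is $2$-elementary, so $-\operatorname{id}$ acts trivially on $q_{M_0}$) is exactly the involution $\xi$ that the paper extracts from \cite[Corollary 1.5.2]{Nikulin1980IntegralSB}, and your final adjustment by the group generated by the reflections $s_{\delta}$, $\delta\in\Delta(M_0)$, using transitivity on the chambers inside one component of $\tilde{\mathscr{C}}_{M_0}$, is the paper's closing step (transitivity suffices; simple transitivity is not needed). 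Where you genuinely differ is the extension step: the paper simply quotes \cite[Proposition 11.2]{MR3039773} for an isometry of $L_{K3}$ commuting with $\iota_{M_0}$ and restricting to $g$ on $M_0^{\perp}$, whereas you rebuild it from Nikulin's gluing criterion, at the price of needing surjectivity of $O(M_0)\to O(q_{M_0})$. That buys self-containedness, but as written this is the one incomplete point: \cite[Theorem 1.14.2]{Nikulin1980IntegralSB} misses precisely the borderline classes with $\rk(M_0)=l(M_0)$ (e.g.\ $\langle 2\rangle\oplus\langle -2\rangle$, $U(2)$, $U(2)\oplus E_8(2)$), which you only promise to check by hand.

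The gap is real as written but closes by a change of citation rather than by case analysis: for an even, indefinite, $2$-elementary lattice $L$, the map $O(L)\to O(q_L)$ is surjective by \cite[Theorem 3.6.3]{Nikulin1980IntegralSB}, which covers every hyperbolic $M_0$ of rank $\geqq 2$; the remaining rank-one case $M_0\cong\langle 2\rangle$ is trivial since $O(q_{M_0})$ is then the trivial group. Two minor points: the glued $\sigma$ lies in $O^+(L_{K3})$ iff $sn_{\mathbb{R}}(h)\,sn_{\mathbb{R}}(g)=+1$ (not "iff both are $+1$"), which is harmless here since $sn_{\mathbb{R}}(g)=+1$ by hypothesis; and the appeal to Yoshikawa's moduli construction as a substitute justification should be avoided, since what is needed there is again Nikulin-type surjectivity, so the reasoning would be circular in presentation if not in substance.
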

	
	\begin{proof}
		Choose $g \in O^+(M_0^{\perp})$.
		By \cite[Proposition 11.2]{MR3039773}, there exists an isometry $\sigma \in O(L_{K3})$ such that $\sigma \circ \iota_{M_0} = \iota_{M_0} \circ \sigma$ and $\sigma|_{M_0^{\perp}}=g$. 
		By \cite[Corollary 1.5.2]{Nikulin1980IntegralSB}, there exists an involution $\xi \in O(L_{K3})$ such that $\xi|_{M_0^{\perp}} = \text{id}$ and $sn_{\mathbb{R}}(\xi|_{M_0})=-1$.
		By replacing $\sigma$ with $\xi \circ \sigma$, if necessaly, we may assume that $\sigma \in O^+(L_{K3})$ and that it preserves the connected components of $\tilde{\mathscr{C}}_{M_0}$.
		
		Let $W$ be the subgroup of $\mon(L_{K3})$ generated by the reflections $s_d$ for $d \in \Delta(M_0)$.
		Note that $s_d|_{M_0^{\perp}} = \operatorname{id}_{M_0^{\perp}}$ for each $d \in \Delta(M_0)$.
		Let $\mathscr{C}_{M_0}$ be the connected component of $\tilde{\mathscr{C}}_{M_0}$ containing $\mathcal{K}_0$,
		and let $\ktmo_+$ be the set of K\"ahler-type chambers contained in $\mathscr{C}_{M_0}$.
		Then $\sigma(\mathcal{K}_0) \in \ktmo_+$ and $W$ acts on $\ktmo_+$ transitively (cf. \cite[Theorem 2.9]{MR4321993}).
		Therefore there exists an element $w \in W$ such that $(w \sigma)(\mathcal{K}_0) = \mathcal{K}_0$.
		Since $w \in W \subset \mon(L_{K3})$ and $\sigma \in O^+(L_{K3}) = \mon(L_{K3})$, we have $w \sigma \in \Gamma(\mathcal{K}_0)$ and 
		this implies $g = (w \sigma)|_{M_0^{\perp}} \in \Gamma_{M_0^{\perp}, \mathcal{K}_0}$.
	\end{proof}
		
	Recall that $L_2$ is given by $L_2 = L_{K3} \oplus \mathbb{Z}e$.
	We define a sublattice $M$ by
	$$
		M=M_0 \oplus \mathbb{Z}e ,
	$$
	and define an involution $\iota_M : L_2 \to L_2$ by
	$$
		 \iota_M(x_0 + ae)=\iota_{M_0}(x_0) +ae
	$$
	for $x_0 \in L_{K3}$ and $a \in \mathbb{Z}$.
	Then $(M, \iota_M)$ is an admissible sublattice of $L_2$.
	
	\begin{dfn}\label{d-2-11}
		Let $\mathcal{K} \in \ktm$.
		A hyperplane $H$ of $M_{\mathbb{R}}$ is a face of $\mathcal{K}$ if $H \cap \partial \mathcal{K}$ contains an open subset of $H$.
	\end{dfn}
	
	\begin{dfn}\label{d-2-12}
		A K\"ahler-type chamber $\mathcal{K} \in \ktm$ is natural if the hyperplane $M_{0, \mathbb{R}}$ is a face of $\mathcal{K}$.
	\end{dfn}
	
	\begin{lem}\label{l-2-8}
		If $\delta = d +ae \in \Delta(M)$ $(d \in M_0, a \in \mathbb{Z})$, then one of the following holds
		$$
			(1) \quad d^2 \geqq 0, \quad (2) \quad d \in \Delta(M_0), \qquad \text{or} \qquad (3) \quad \frac{d}{2} \in \Delta(M_0).
		$$
	\end{lem}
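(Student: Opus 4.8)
The plan is to expand the Beauville--Bogomolov form on $\delta$ via the orthogonal decomposition $M = M_0 \oplus \mathbb{Z}e$, using $e^2 = -2$ and $(e, L_{K3}) = 0$, and then to treat separately the two kinds of elements making up $\Delta(M)$.

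Write $\delta = d + ae$ with $d \in M_0$ and $a \in \mathbb{Z}$. Since $e \perp L_{K3}$ one has $\delta^2 = d^2 - 2a^2$, together with $(\delta, x_0) = (d, x_0)$ for all $x_0 \in L_{K3}$ and $(\delta, e) = -2a$. First I would dispose of the case $\delta^2 = -2$: then $d^2 = 2(a^2 - 1)$, so either $a = 0$ and $d^2 = -2$, whence $d \in \Delta(M_0)$ and alternative (2) holds, or $a \neq 0$ and $d^2 = 2(a^2-1) \geq 0$, which is alternative (1).

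The \emph{substantive} case is $\delta^2 = -10$ with $(\delta, L_2) = 2\mathbb{Z}$. The crucial step --- the one step that is not pure arithmetic --- is to conclude $d \in 2M_0$ from the divisibility hypothesis. Indeed $(d, L_{K3}) = (\delta, L_{K3}) \subseteq (\delta, L_2) = 2\mathbb{Z}$, and since $L_{K3}$ is unimodular this forces $d \in 2 L_{K3}$; since $M_0$ is primitive in $L_{K3}$ the group $L_{K3}/M_0$ is torsion-free, so $d \in M_0 \cap 2L_{K3} = 2 M_0$. Writing $d = 2 d'$ with $d' \in M_0$, the relation $\delta^2 = -10$ becomes $2(d')^2 = a^2 - 5$, which forces $a$ to be odd. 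If $|a| = 1$ then $(d')^2 = -2$, so $d/2 = d' \in \Delta(M_0)$, which is alternative (3); if $|a| \geq 3$ then $(d')^2 = (a^2-5)/2 \geq 2$, so $d^2 = 4(d')^2 \geq 0$, which is alternative (1). This exhausts all cases, and I expect the implication $(\delta, L_2) = 2\mathbb{Z} \Rightarrow d \in 2M_0$ to be the only point needing care; the rest is short arithmetic with the quadratic form.
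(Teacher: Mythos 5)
Your proof is correct and follows essentially the same route as the paper: the key step, deducing $d\in 2M_0$ from $(\delta,L_2)=2\mathbb{Z}$ via the unimodularity of $L_{K3}$ and the primitivity of $M_0$, is exactly the paper's argument. The only (harmless) difference is cosmetic: in the $\delta^2=-10$ case you finish by noting $a$ must be odd and splitting into $|a|=1$ and $|a|\geqq 3$, whereas the paper uses $d^2=4(d')^2\in 8\mathbb{Z}$ to rule out $a=0$ after assuming $d^2\leqq -8$.
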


	\begin{proof}
		If $\delta^2=-2$, then $d^2 = 2a^2-2 \geqq -2$.
		Therefore we have $(1)$ $d^2 \geqq 0$ or (2) $d \in \Delta(M_0)$.\par
		Assume that $\delta^2=-10$ and $(\delta, L_2)=2\mathbb{Z}$.
		Set $d' =\frac{d}{2} \in L_{K3, \mathbb{Q}}$.
		Since $L_{K3} \subset L_2$ and $2\mathbb{Z} \supset (\delta, L_{K3}) = (d, L_{K3})$, we have $(d', L_{K3}) \subset \mathbb{Z}$.
		Since $L_{K3}$ is unimodular, we have $d' \in L_{K3}^{\vee} =L_{K3}$.
		Since $2d'=d \in M_0$ and $M_0$ is primitive, we have $d' \in M_0$.
		Therefore $d^2 = 4(d')^2 \in 8\mathbb{Z}$.
		If $d^2 \geqq 0$, then $(1)$ holds.
		So we may assume that $d^2 \leqq -8$.
		Since $\delta^2=-10$, we have $a^2 = \frac{d^2}{2}+5 \leqq 1$.
		If $a=0$, then $d^2 =-10 \notin 8\mathbb{Z}$, which is impossible.
		If $a^2=1$, then $d^2=-8$ and $(d')^2=-2$.
		Therefore, we have $(3)$ $\frac{d}{2} \in \Delta(M_0)$.
	\end{proof}
	 
	Since $M_0^{\perp}=M^{\perp}$, we may identify $O(M_0^{\perp})=O(M^{\perp})$.
	
	\begin{thm}\label{p-2-1}
		If $\mathcal{K} \in \ktm$ is natural, then $\Gamma_{M^{\perp}, \mathcal{K}} = O^+(M_0^{\perp})$.
	\end{thm}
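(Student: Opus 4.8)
The plan is to prove the remaining inclusion $O^+(M_0^\perp)\subseteq\Gamma_{M^\perp,\mathcal{K}}$, the opposite inclusion $\Gamma_{M^\perp,\mathcal{K}}\subseteq O^+(M^\perp)=O^+(M_0^\perp)$ having already been observed above, by transporting Lemma~\ref{l-2-7} from $M_0$ to $M$. The geometric heart of the matter is that a \emph{natural} K\"ahler-type chamber of $M$ carries essentially the same information as a K\"ahler-type chamber of $M_0$.

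The first step is to make this precise. Since $e^2=-2$ we have $e\in\Delta(M)$, and $M_{0,\mathbb{R}}=e^\perp$ inside $M_{\mathbb{R}}$; thus $M_{0,\mathbb{R}}$ is one of the hyperplanes of the chamber decomposition of $\tilde{\mathscr{C}}_M$, and $\mathcal{K}$ being natural means precisely that $e^\perp$ is a wall of $\mathcal{K}$. I would fix the connected component $\mathscr{C}_M$ of $\tilde{\mathscr{C}}_M$ containing $\mathcal{K}$; then $\mathscr{C}_M\cap e^\perp$ is one connected component $\mathscr{C}_{M_0}$ of $\tilde{\mathscr{C}}_{M_0}$, and $\mathscr{C}_M\setminus e^\perp$ consists of two convex halves $\mathscr{C}_M^{\pm}$ exchanged by $s_e$ and distinguished by the sign of $(\,\cdot\,,e)$, with $\mathcal{K}\subset\mathscr{C}_M^{+}$, say. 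For a natural chamber $\mathcal{K}'\subset\mathscr{C}_M^{+}$ I would set $F'=\overline{\mathcal{K}'}\cap M_{0,\mathbb{R}}$ and let $\operatorname{int}(F')$ denote the (nonempty, open in $M_{0,\mathbb{R}}$) set of $x_0\in F'$ with $x_0^2>0$ lying on no wall $\delta^\perp$ with $\delta\in\Delta(M)\setminus\mathbb{R}e$. Lemma~\ref{l-2-8} gives that for $\delta=d+ae\in\Delta(M)$ with $d\neq0$ one has either $d^2\ge0$ — in which case $d^\perp\cap\mathscr{C}_{M_0}=\emptyset$, because $M_0$ is hyperbolic — or $d^\perp=d_0^{\perp}$ for some $d_0\in\Delta(M_0)$; and conversely $\Delta(M_0)\subseteq\Delta(M)$. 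Combining this with the local picture of a locally finite hyperplane arrangement near a wall, I would deduce that $\operatorname{int}(F')$ lies in a unique chamber $\mathcal{K}_0'\in\ktmo$, that in fact $\operatorname{int}(F')=\mathcal{K}_0'$ (no wall of $M$ other than $e^\perp$ meets a point of $\mathcal{K}_0'$, again by Lemma~\ref{l-2-8} and hyperbolicity), and hence that $\mathcal{K}'\mapsto\mathcal{K}_0'$ is a bijection from the natural chambers contained in $\mathscr{C}_M^{+}$ onto the K\"ahler-type chambers contained in $\mathscr{C}_{M_0}$. Let $\mathcal{K}_0$ be the chamber associated to the given $\mathcal{K}$.

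Next, given $g\in O^+(M_0^\perp)$, I would apply Lemma~\ref{l-2-7} to $\mathcal{K}_0$ to obtain $\sigma_0\in\Gamma(\mathcal{K}_0)\subseteq\mon(L_{K3})=O^+(L_{K3})$ with $\sigma_0\iota_{M_0}=\iota_{M_0}\sigma_0$, $\sigma_0(\mathcal{K}_0)=\mathcal{K}_0$ and $\sigma_0|_{M_0^\perp}=g$, and extend it to $\sigma\colon L_2\to L_2$ by $\sigma(x_0+ae)=\sigma_0(x_0)+ae$. Since $L_2=L_{K3}\oplus\mathbb{Z}e$ is orthogonal and $\sigma$ is block diagonal, $\sigma\in O(L_2)$ and $sn_{\mathbb{R}}(\sigma)=sn_{\mathbb{R}}(\sigma_0)=+1$, so $\sigma\in O^+(L_2)=\mon(L_2)$; moreover $\sigma$ commutes with $\iota_M$ because $\sigma_0$ commutes with $\iota_{M_0}$ and $\iota_M$ fixes $e$, and $\sigma|_{M^\perp}=\sigma|_{M_0^\perp}=g$. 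It then remains to check $\sigma(\mathcal{K})=\mathcal{K}$. As $\sigma$ fixes $e$, it preserves $e^\perp$ and the sign of $(\,\cdot\,,e)$, and $\sigma|_{M_{0,\mathbb{R}}}=\sigma_0|_{M_{0,\mathbb{R}}}$ preserves $\mathscr{C}_{M_0}$ (because $\sigma_0(\mathcal{K}_0)=\mathcal{K}_0$); hence $\sigma$ preserves $\mathscr{C}_M$ and $\mathscr{C}_M^{+}$, sends natural chambers in $\mathscr{C}_M^{+}$ to natural chambers in $\mathscr{C}_M^{+}$, and the bijection of the previous step is $\sigma$-equivariant, the chamber associated to $\sigma(\mathcal{K}')$ being $\sigma_0(\mathcal{K}_0')$. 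Taking $\mathcal{K}'=\mathcal{K}$, the chamber associated to $\sigma(\mathcal{K})$ is $\sigma_0(\mathcal{K}_0)=\mathcal{K}_0$, the same as that associated to $\mathcal{K}$; by injectivity $\sigma(\mathcal{K})=\mathcal{K}$, i.e.\ $\sigma\in\Gamma(\mathcal{K})$ and $g=\sigma|_{M^\perp}\in\Gamma_{M^\perp,\mathcal{K}}$, as desired.

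The step I expect to be the main obstacle is the first one: rigorously proving $\operatorname{int}(F')=\mathcal{K}_0'$ and the bijectivity of $\mathcal{K}'\mapsto\mathcal{K}_0'$, which requires unwinding the definition of ``face'' and controlling the local combinatorics of the wall arrangement near $e^\perp$, with Lemma~\ref{l-2-8} used to exclude spurious walls. By contrast the lifting and the verifications in the last step (isometry property, spinor norm, commutation with $\iota_M$, preservation of $\mathscr{C}_M^{+}$) are routine once the correspondence is in place.
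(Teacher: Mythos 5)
Your proposal is correct in substance, and its second half coincides with the paper's argument: the reduction of the nontrivial inclusion to Lemma~\ref{l-2-7}, the block-diagonal extension $\sigma(x_0+ae)=\tau(x_0)+ae$ fixing $e$, and the verifications of the real spinor norm, of the commutation with $\iota_M$, and of $\sigma|_{M^{\perp}}=g$ are exactly the paper's Step~2. Where you genuinely diverge is in proving $\sigma(\mathcal{K})=\mathcal{K}$. The paper does this quantitatively: using naturality it produces $\omega_0\in\overline{\mathcal{K}}\cap M_{0,\mathbb{R}}$ with $\omega_0+be\in\mathcal{K}$ for all small $b>0$ (a sign/convexity argument), and then excludes a separating wall $\delta=d+\lambda e$ by bounding $|(t\tau(\omega_0)+(1-t)\omega_0,d)|\geqq C$ via Lemma~\ref{l-2-8} and choosing $a'<C/2|\lambda|$; note that this route never needs local finiteness of the wall arrangement. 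You instead build a $\sigma$-equivariant bijection between natural chambers on the $\mathcal{K}$-side of $e^{\perp}$ and the chambers of $\ktmo$ contained in $\mathscr{C}_{M_0}$, and conclude by injectivity. This does work: Lemma~\ref{l-2-8} together with hyperbolicity of $M_0$ shows that the traces on $\mathscr{C}_{M_0}$ of the walls $\delta^{\perp}$ with $\delta\in\Delta(M)\setminus\mathbb{R}e$ are precisely the walls $d_0^{\perp}$, $d_0\in\Delta(M_0)$, while $\Delta(M_0)\subset\Delta(M)$ forces the signs $(x_0,d_0)$ on $\operatorname{int}(F')$ to be those inherited from $\mathcal{K}'$, which gives well-definedness; injectivity, surjectivity and $\operatorname{int}(F')=\mathcal{K}_0'$ then follow from a collar argument (over any compact segment in a chamber $\mathcal{K}_0'$ there is a uniform collar $\{x+be;\,0<b<\epsilon\}$ meeting no wall except $e^{\perp}$, so exactly one $M$-chamber is adjacent to $\mathcal{K}_0'$ from each side), and equivariance uses that $\sigma$ preserves $M$, $\Delta(M)$, $e^{\perp}$ and $\mathscr{C}_M^{+}$, all of which you note. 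The price of your route, and the one thing still to be supplied, is exactly the ingredient you flag: the collar/adjacency argument requires local finiteness of $\{\delta^{\perp}\}_{\delta\in\Delta(M)}$ in $\tilde{\mathscr{C}}_M$ (a standard fact for root systems of bounded norm in a hyperbolic lattice, of the same nature as \cite[Lemma 7.7]{MR3519981}, but not proved in the paper) together with convexity of the chambers; the paper's proof is arranged precisely so as to bypass this, at the cost of the slightly ad hoc estimate with $C$ and $a'$. So: same lifting skeleton, a more structural but technically heavier treatment of the chamber-preservation step on your side, a leaner and self-contained one in the paper.
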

	
	\begin{proof}
		It is clear that $\Gamma_{M^{\perp}, \mathcal{K}} \subset O^+(M^{\perp}) =O^+(M_0^{\perp})$.
		Since $e \in \Delta(M)$, either $(\mathcal{K}, e)<0$ or $(\mathcal{K}, e)>0$ holds.
		We assume that 
		\begin{align}\label{f-2-2}
			(\mathcal{K}, e)<0.
		\end{align}  \par
		\noindent
		\textbf{Step 1 } 
		Let $p : M_{\mathbb{R}} \to M_{0, \mathbb{R}}$ be the orthogonal projection.
		Since $p(\mathcal{K})$ is a connected open subset of $\tilde{\mathscr{C}}_{M_0} \setminus  \bigcup_{d \in \Delta(M_0)} d^{\perp}$,
		there exists a unique chamber $\mathcal{K}_0 \in \ktmo$ such that $p(\mathcal{K}) \subset \mathcal{K}_0$.
		By the continuity of $p$, we have $\bar{\mathcal{K}} \cap M_{0, \mathbb{R}} \subset \overline{p(\mathcal{K})}$.
		Since $\mathcal{K} \in \ktm$ is natural, $\bar{\mathcal{K}} \cap M_{0, \mathbb{R}}$ contains an open subset of $M_{0, \mathbb{R}}$, 
		and $\bar{\mathcal{K}} \cap M_{0, \mathbb{R}} \cap p(\mathcal{K}) \neq \emptyset$.
		We fix an elemtent $\omega_0 \in \bar{\mathcal{K}} \cap M_{0, \mathbb{R}}$ with $\omega_0 \in p(\mathcal{K}) (\subset \mathcal{K}_0)$.
		Then there exists a real number $a \in \mathbb{R}$ such that $\omega_0 + ae \in \mathcal{K}$. 
		We set $\omega = \omega_0 + ae$.
		By (\ref{f-2-2}), we have $a>0$.\par
		We claim that $ \omega_0 + be \in \mathcal{K} $ for any $b \in (0,a)$.
		Let $\delta \in \Delta(M)$.
		If $(\mathcal{K}, \delta)>0$, then $(\bar{\mathcal{K}}, \delta) \geqq 0$.
		Since $\omega_0 \in \bar{\mathcal{K}}$ and $\omega \in \mathcal{K}$, we have $(\omega_0, \delta) \geqq 0$ and $(\omega, \delta) > 0$.
		For any $b \in (0,a)$, we have 
		$$
			\omega_0 + be = \frac{a-b}{a} \omega_0 + \frac{b}{a} \omega,
		$$
		and $(\omega_0 + be, \delta) > 0$.
		Similarly, if $(\mathcal{K}, \delta)<0$, then $(\omega_0 + be, \delta) < 0$ for any $b \in (0,a)$.
		Since $\mathcal{K}$ is a connected component of $\tilde{\mathscr{C}}_M \setminus \bigcup_{\delta \in \Delta(M)} \delta^{\perp}$,
		we have 
		$$
			\omega_0 + be \in \mathcal{K}
		$$
		for any $b \in (0,a)$.\par
		\noindent
		\textbf{Step 2 } 
		Let $g \in O^+(M_0^{\perp})$. 
		By Lemma \ref{l-2-7}, there exists an element $\tau \in \Gamma(\mathcal{K}_0)$ such that $\tau|_{M_0^{\perp}} =g$.
		We define $\sigma \in O(L_2)$ by
		$$
			\sigma( x_0 + ae) = \tau(x_0) + ae, \quad (x_0 \in L_{K3}, a \in \mathbb{Z}).
		$$
		By the construction of $\sigma$ and $\iota_M$, we have $\sigma \circ \iota_M = \iota_M \circ \sigma$.
		If we write $\tau =s_{v_1} \dots s_{v_m}$ $(v_i \in L_{K3, \mathbb{R}})$, 
		then $v_i \in L_2$ and $\sigma =s_{v_1} \dots s_{v_m}$.
		Therefore $sn_{\mathbb{R}}( \sigma ) =sn_{\mathbb{R}}( \tau ) =+1$
		and we have $\sigma \in O^+(L_2) = \mon(L_2)$.
		Therefore we have $\sigma \in \Gamma(M)$.\par
		Suppose that $\mathcal{K} \cap \sigma(\mathcal{K}) = \emptyset$.
		Then there exists $\delta \in \Delta(M)$ such that 
		\begin{align}\label{f-2-3}
			(\mathcal{K}, \delta)>0 \quad \text{and} \quad (\sigma(\mathcal{K}), \delta)<0.
		\end{align}
		Write $\delta = d + \lambda e$, where $d \in M_{0, \mathbb{R}}$ and $\lambda \in \mathbb{Z}$.
		We define a real-valued function $f(t)$ on $[0,1]$ by
		$$
			f(t) =(t \tau(\omega_0) + (1-t) \omega_0, d), \quad ( t\in [0,1] ).
		$$
		Since $\tau \in \Gamma(\mathcal{K}_0)$, we have $\tau(\omega_0) \in \mathcal{K}_0$.
		Since $\mathcal{K}_0$ is a convex set, the line segment $\left\{ t \tau(\omega_0) + (1-t) \omega_0 ; 0 \leqq t \leqq 1 \right\}$ is contained in $\mathcal{K}_0$.
		By Lemma \ref{l-2-8}, the hyperplane $d^{\perp}$ in $M_{0, \mathbb{R}}$ does not intersect with $\tilde{\mathscr{C}}_{M_0} \setminus  \bigcup_{\delta \in \Delta(M_0)} \delta^{\perp}$.
		Therefore, we have $(x_0, d) \neq 0$ for any $x_0 \in \mathcal{K}_0$.
		Hence $f(t) \neq 0$ for any $t \in [0,1]$.
		Therefore, there exists a positive constant $C>0$ such that $|f(t)| \geqq C$ for any $0 < t<1$.
		 Let $a' > 0$ be a positive number that satisfies
		 $$
		 	 a' < \left\{ 
					\begin{split}
						&\operatorname{min} \left\{ a, \frac{C}{2|\lambda|} \right\} &\quad  \text{if} \quad \lambda \neq 0 \\
						&\qquad a & \text{if} \quad \lambda = 0
				\end{split} \right.
		 $$
		 and set $\omega' = \omega_0 + a' e$.
		 By Step 1, $\omega_0 + b e \in \mathcal{K}$ for any $b \in (0, a)$.
		 In particular, $\omega' \in \mathcal{K}$.
		Hence $\sigma(\omega') \in \sigma(\mathcal{K})$.
		 By (\ref{f-2-3}), there is a real number $0<t_0<1$ such that
		 $$
		 	0=(t_0 \sigma(\omega') +(1-t_0)\omega', \delta)=f(t_0)-2a' \lambda.
		 $$
		 If $\lambda = 0$, we have $0=|f(t_0)| \geqq C >0$, which is impossible.
		 Suppose that $\lambda \neq 0$.
		 Since $|f(t_0)| \geqq C$ and $a' < \frac{C}{2 |\lambda|}$, we get 
		 $$
		 	C \leqq |f(t_0)| = 2a' |\lambda|<C,
		 $$
		 which is also impossible.
		 Thus we have $\mathcal{K} \cap \sigma(\mathcal{K}) \neq \emptyset$.
		 Since $\mathcal{K}$ and $\sigma(\mathcal{K})$ are connected components of $\tilde{\mathscr{C}}_M \setminus \bigcup_{\delta \in \Delta(M)} \delta^{\perp}$,
		 we have $\mathcal{K}=\sigma(\mathcal{K})$, and $\sigma \in \Gamma(\mathcal{K})$.
		 Therefore $g = \tau|_{M_0^{\perp}}= \sigma|_{M^{\perp}} \in \Gamma_{M^{\perp}, \mathcal{K}}$.
		 Since $g \in O^+(M_0^{\perp})$ is arbitrary, we have $\Gamma_{M^{\perp}, \mathcal{K}} = O^+(M_0^{\perp})$.
		 If $(\mathcal{K}, e)>0$, then we can prove the statement in the same manner.
	\end{proof}
		
	\begin{cor}\label{c-2-1}
		Let $M_0$ be a primitive hyperbolic 2-elementary sublattice of $L_{K3}$ and set $M=M_0 \oplus \mathbb{Z} e$.
		Let $\mathcal{K} \in \ktm$ be a natural chamber.
		The identity map $\Omega_{M_0^{\perp}} \to \Omega_{M^{\perp}}$ induces an isomorphism of orthogonal modular varieties $\mathcal{M}_{M_0} \cong \mathcal{M}_{M, \mathcal{K}}$.
	\end{cor}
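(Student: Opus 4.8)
The strategy is to realise both $\mathcal{M}_{M_0}$ and $\mathcal{M}_{M,\mathcal{K}}$ as quotients of one and the same period domain by one and the same arithmetic group, and then to invoke Theorem \ref{p-2-1}. First I would note that, because $e$ is orthogonal to $L_{K3}$ and $M_0 \subset L_{K3}$, the orthogonal complement of $M = M_0 \oplus \mathbb{Z}e$ in $L_2$ coincides with the orthogonal complement of $M_0$ in $L_{K3}$; this identification $M^{\perp} = M_0^{\perp}$ was already recorded just before Theorem \ref{p-2-1}. Hence the period domains are literally the same, $\Omega_{M^{\perp}} = \Omega_{M_0^{\perp}}$, the identity of $\mathbb{P}(M^{\perp}_{\mathbb{C}}) = \mathbb{P}((M_0^{\perp})_{\mathbb{C}})$ being the tautological isomorphism, and $O^+(M^{\perp})$ and $O^+(M_0^{\perp})$ are the same subgroup of $O(M^{\perp}) = O(M_0^{\perp})$: by (\ref{al-2-A}) the group $O^+$ of a lattice of signature $(2,n)$ is the stabiliser of either connected component of its period domain, so it does not depend on which component one distinguishes.

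Next I would apply Theorem \ref{p-2-1}, which gives $\Gamma_{M^{\perp},\mathcal{K}} = O^+(M_0^{\perp})$ because $\mathcal{K}$ is natural. Consequently
\[
  \mathcal{M}_{M,\mathcal{K}} = \Omega^+_{M^{\perp}}/\Gamma_{M^{\perp},\mathcal{K}} = \Omega^+_{M^{\perp}}/O^+(M_0^{\perp}) \quad\text{and}\quad \mathcal{M}_{M_0} = \Omega^+_{M_0^{\perp}}/O^+(M_0^{\perp})
\]
are both quotients of a connected component of $\Omega_{M_0^{\perp}}$ by the common group $O^+(M_0^{\perp})$.

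Finally I would check that the two distinguished components $\Omega^+_{M^{\perp}}$ and $\Omega^+_{M_0^{\perp}}$ can be taken to coincide. Here a small point has to be addressed: the component $\Omega^+_{M_0^{\perp}}$ fixed in Subsection \ref{ss-2-4} and the component $\Omega^+_{M^{\perp}}$ attached in Subsection \ref{ss-2-3} to $h \in \mathcal{K}\cap M$ and to $\mathfrak{M}^{\circ}_{L_2}$ must be matched. I would do this by choosing $\Omega^+_{M_0^{\perp}} := \Omega^+_{M^{\perp}}$ and then verifying compatibility with the geometric interpretation: under the embedding $i\colon H^2(Y,\mathbb{Z}) \hookrightarrow H^2(Y^{[2]},\mathbb{Z})$ of Example \ref{e-2-5} the line $H^{2,0}(Y)$ is carried to $H^{2,0}(Y^{[2]})$ compatibly with $M_0$-admissible markings (Example \ref{e-2-13}), so the period of a $2$-elementary K3 surface of type $M_0$ and the period of the induced pair $(Y^{[2]},\sigma^{[2]})$ lie in the same component; hence the chosen component indeed serves both moduli spaces. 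With this matching in hand, the identity map $\Omega_{M_0^{\perp}}\to\Omega_{M^{\perp}}$ descends to the asserted isomorphism $\mathcal{M}_{M_0}\cong\mathcal{M}_{M,\mathcal{K}}$. The only genuine obstacle in the argument is precisely this orientation and connected-component bookkeeping; the essential input, namely the equality of the two arithmetic groups, is exactly the content of Theorem \ref{p-2-1}.
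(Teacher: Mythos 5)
Your argument is correct and is essentially the paper's own (implicit) proof: the paper states Corollary \ref{c-2-1} with no separate proof precisely because, after the identification $M^{\perp}=M_0^{\perp}$ made just before Theorem \ref{p-2-1}, the equality $\Gamma_{M^{\perp},\mathcal{K}}=O^+(M_0^{\perp})$ immediately identifies the two quotients. Your extra bookkeeping about matching the distinguished components is consistent with the paper, since $\Omega^+_{M_0^{\perp}}$ is only ever ``a fixed connected component'' in Subsection \ref{ss-2-4} and may simply be taken to be $\Omega^+_{M^{\perp}}$, so no further geometric verification is needed for the corollary itself.
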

	
	Let $\tilde{\mathcal{M}}_{M_0}$ be the set of isomorphism classes of 2-elementary K3 surfaces of type $M_0$,
	and $\bar{\pi}_{M_0} : \tilde{\mathcal{M}}_{M_0} \to \mathcal{M}_{M_0}$ be the period map defined by
	$$
		\bar{\pi}_{M_0}(Y, \sigma) =  [\alpha(H^{2,0}(Y))] \quad ((Y, \sigma) \in \tilde{\mathcal{M}}_{M_0}),
	$$
	where $\alpha : H^2(Y, \mathbb{Z}) \to L_{K3}$ is a marking of $(Y, \sigma)$.
	Recall that $\tilde{\mathcal{M}}_{M, \mathcal{K}}$ is the set of isomorphism classes of $K3^{[2]}$-type manifolds with involution of type $(M, \mathcal{K})$,
	and $P_{M, \mathcal{K}} : \tilde{\mathcal{M}}_{M, \mathcal{K}} \to \mathcal{M}_{M, \mathcal{K}}$ is the period map.
	We have a natural map $\Phi : \tilde{\mathcal{M}}_{M_0} \to \tilde{\mathcal{M}}_{M, \mathcal{K}}$ defined by
	$$
		\Phi(Y, \sigma) = (Y^{[2]}, \sigma^{[2]})  \quad ((Y, \sigma) \in \tilde{\mathcal{M}}_{M_0}).
	$$
	Then the following diagram commutes:
	$$
		\xymatrix{
			\tilde{\mathcal{M}}_{M_0} \ar[r]^{\Phi}  \ar[d]_{\bar{\pi}_{M_0}} & \tilde{\mathcal{M}}_{M, \mathcal{K}} \ar[d]^{P_{M, \mathcal{K}}}   \\
			 \mathcal{M}_{M_0} \ar[r]_{\cong} & \mathcal{M}_{M, \mathcal{K}}  
		}
	$$
	
	\begin{exa}\label{e-2-35}
		Let $C$ be a smooth sextic in $\mathbb{P}^2$ and let $Y \to \mathbb{P}^2$ be the double cover blanched over $C$.
		The covering involution of $Y \to \mathbb{P}^2$ is denoted by $\sigma : Y \to Y$.
		By Example \ref{e-2-4}, $(Y^{[2]}, \sigma^{[2]})$ and $(\elm_{Y/\sigma}(Y^{[2]}), \elm_{Y/\sigma}(\sigma^{[2]}))$ are manifolds of $K3^{[2]}$-type with antisymplectic involution.
		The invariant lattice $H^2(Y, \mathbb{Z})^{\sigma}$ is generated by an element $h_0$ with $h_0^2=+2$.
		By Example \ref{e-2-13}, we have
		$
			H^2(Y^{[2]}, \mathbb{Z})^{\sigma^{[2]}} \cong \mathbb{Z}h_0 \oplus \mathbb{Z} \varepsilon.
		$
		By \cite[Proposition 25.14]{MR1963559}, we also have $H^2(\elm_{Y/\sigma}(Y^{[2]}), \mathbb{Z})^{\elm_{Y/\sigma}(\sigma^{[2]})} \cong \mathbb{Z}h_0 \oplus \mathbb{Z} \varepsilon.$\par
		We define a sublattice $M_0$ of $L_{K3}$ by $M_0 =\mathbb{Z}h$, where $h \in L_{K3}$ satisfies $h^2=2$.
		It is a primitive hyperbolic 2-elementary sublattice of $L_{K3}$.
		We set $M=M_0 \oplus \mathbb{Z} e$.
		Then $M$ is an admissible sublattice of $L_2$.
		By \cite[Example 9.12]{MR3519981}, we have
		$$
			\Delta(M) = \pm \{e, 2h+3e, 2h-3e \} \quad \text{and} \quad \ktm/\Gamma_{M} =\{ [\mathcal{K}], [\mathcal{K}'] \},
		$$
		where $\mathcal{K}$ and $\mathcal{K}'$ are K\"ahler-type chambers defined by $\mathcal{K}= \mathbb{R}_{> 0} h +\mathbb{R}_{> 0}(3h+2e)$ and $\mathcal{K}' =\mathbb{R}_{> 0}(3h+2e) +\mathbb{R}_{> 0}(h+e)$.
		By \cite[Example 9.12]{MR3519981}, $\mathcal{K}$ is natural, $\rho(Y^{[2]}, \sigma^{[2]})=[\mathcal{K}]$ and $\rho(\elm_{Y/\sigma}(Y^{[2]}), \elm_{Y/\sigma}(\sigma^{[2]}))=[\mathcal{K}']$.\par
		By Proposition \ref{p-2-1} and Corollary \ref{c-2-1}, we have $\Gamma_{M^{\perp}, \mathcal{K}} = O^+(M_0^{\perp})$
		and the identity map $\Omega_{M_0^{\perp}} \to \Omega_{M^{\perp}}$ gives an isomorphism of $\mathcal{M}_{M_0} \cong \mathcal{M}_{M, \mathcal{K}}$.
	\end{exa}

	We prove that the same statement holds for the non-natural K\"ahler-type chamber $[\mathcal{K}']$.
	
	\begin{prop}\label{p-2-2}
		With the same notation as in Example \ref{e-2-35}, we have $\Gamma_{M^{\perp}, \mathcal{K}'} = O^+(M_0^{\perp})$.
		In particular, the identity map $\Omega_{M_0^{\perp}} \to \Omega_{M^{\perp}}$ induces an isomorphism of orthogonal modular varieties $\mathcal{M}_{M_0} \cong \mathcal{M}_{M, \mathcal{K}'}$.
	\end{prop}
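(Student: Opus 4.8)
The plan is to re-run the argument in the proof of Proposition~\ref{p-2-1}, which collapses to something nearly immediate here because $M_0=\mathbb{Z}h$ has rank one. As in the general construction of $\Gamma_{M^{\perp},\mathcal{K}}$, the inclusion $\Gamma_{M^{\perp},\mathcal{K}'}\subseteq O^+(M^{\perp})=O^+(M_0^{\perp})$ holds automatically, so all the content lies in the reverse inclusion $O^+(M_0^{\perp})\subseteq\Gamma_{M^{\perp},\mathcal{K}'}$.

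So fix $g\in O^+(M_0^{\perp})$ and put $\mathcal{K}_0=\mathbb{R}_{>0}h\in\ktmo$; note that the orthogonal projection $p\colon M_{\mathbb{R}}\to M_{0,\mathbb{R}}$ sends $\mathcal{K}'$ into $\mathcal{K}_0$, since $p(3h+2e)=3h$ and $p(h+e)=h$, matching the choice of chamber in Step~1 of the proof of Proposition~\ref{p-2-1}. By Lemma~\ref{l-2-7} there is $\tau\in\Gamma(\mathcal{K}_0)$ with $\tau|_{M_0^{\perp}}=g$. Since $\tau$ commutes with $\iota_{M_0}$ it preserves $M_0=(L_{K3})^{\iota_{M_0}}$, and since it fixes the ray $\mathcal{K}_0=\mathbb{R}_{>0}h$ while $O(\mathbb{Z}h)=\{\pm1\}$, it must fix $h$; hence $\tau|_{M_0}=\mathrm{id}$. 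This is precisely where the rank-one hypothesis pays off: for $\rk M_0\ge 2$ one only gets $\tau(\mathcal{K}_0)=\mathcal{K}_0$ setwise, and the delicate convexity estimate of Steps~1--2 of the proof of Proposition~\ref{p-2-1} is needed to push the induced isometry back onto the chamber, whereas here that estimate is vacuous.

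Next, define $\sigma\in O(L_2)$ by $\sigma(x_0+ae)=\tau(x_0)+ae$ for $x_0\in L_{K3}$, $a\in\mathbb{Z}$, exactly as in the proof of Proposition~\ref{p-2-1}. Then $\sigma$ commutes with $\iota_M$, and writing $\tau=s_{v_1}\cdots s_{v_m}$ with $v_i\in L_{K3,\mathbb{R}}$ gives $\sigma=s_{v_1}\cdots s_{v_m}$ in $O(L_{2,\mathbb{R}})$, so $sn_{\mathbb{R}}(\sigma)=sn_{\mathbb{R}}(\tau)=+1$ (as $\tau\in\mon(L_{K3})=O^+(L_{K3})$) and $\sigma\in O^+(L_2)=\mon(L_2)$; thus $\sigma\in\Gamma(M)$. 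Because $\tau|_{M_0}=\mathrm{id}$ and $\sigma(e)=e$, we have $\sigma|_M=\mathrm{id}_M$, so in particular $\sigma(\mathcal{K}')=\mathcal{K}'$ and $\sigma\in\Gamma(\mathcal{K}')$, whence $g=\sigma|_{M^{\perp}}\in\Gamma_{M^{\perp},\mathcal{K}'}$. Since $g$ was arbitrary, $\Gamma_{M^{\perp},\mathcal{K}'}=O^+(M_0^{\perp})$. The asserted isomorphism $\mathcal{M}_{M_0}\cong\mathcal{M}_{M,\mathcal{K}'}$ then follows just as in Corollary~\ref{c-2-1}: both sides are $\Omega^+_{M_0^{\perp}}/O^+(M_0^{\perp})$, once one observes that $\mathcal{K}$ and $\mathcal{K}'$ lie in the same connected component of $\tilde{\mathscr{C}}_M$ (both sit in $\{x=ah+be\in M_{\mathbb{R}};a>|b|\}$), so that the preferred component $\Omega^+_{M^{\perp}}$ is the same one already fixed in the natural case treated by Proposition~\ref{p-2-1} and Corollary~\ref{c-2-1}. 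I do not anticipate a real obstacle: the only substantive point is that $\tau\in\Gamma(\mathcal{K}_0)$ fixes $h$, which is forced by $\rk M_0=1$, and the remaining work is the orientation bookkeeping identical to that behind Corollary~\ref{c-2-1}.
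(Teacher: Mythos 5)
Your proof is correct, but it follows a genuinely different route from the paper. The paper proves the identity $\Gamma_{M^{\perp}, \mathcal{K}'} = \Gamma_{M^{\perp}, \mathcal{K}}$ directly: for $\sigma \in \Gamma(\mathcal{K}')$ the restriction $\sigma|_M$ must preserve the boundary $\partial\mathcal{K}'$, whose two rays $\mathbb{R}_{\geqq 0}(3h+2e)$ and $\mathbb{R}_{\geqq 0}(h+e)$ are metrically distinguishable (squares $10$ and $0$), so each ray is fixed, forcing $\sigma|_M = \operatorname{id}_M$ and hence $\sigma \in \Gamma(\mathcal{K})$; the conclusion then follows from Theorem \ref{p-2-1} applied to the natural chamber $\mathcal{K}$. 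You instead bypass both the boundary-ray rigidity and the naturality of $\mathcal{K}$: you lift a given $g \in O^+(M_0^{\perp})$ via Lemma \ref{l-2-7} to $\tau \in \Gamma(\mathcal{K}_0)$, observe that $\rk M_0 = 1$ forces $\tau|_{M_0} = \operatorname{id}$ (since $O(\mathbb{Z}h) = \{\pm 1\}$ and $\tau$ preserves the ray $\mathbb{R}_{>0}h$), and then the extension $\sigma$ with $\sigma|_M = \operatorname{id}_M$ lies in $\Gamma(\mathcal{K}'')$ for \emph{every} chamber $\mathcal{K}'' \in \ktm$ — so your argument in fact shows $\Gamma_{M^{\perp}, \mathcal{K}''} = O^+(M_0^{\perp})$ uniformly for all chambers when $\rk M_0 = 1$, a slightly stronger statement, whereas the paper's argument is tailored to the specific $\mathcal{K}'$ and additionally records the equality $\Gamma_{M^{\perp}, \mathcal{K}'} = \Gamma_{M^{\perp}, \mathcal{K}}$ of the two stabilizers themselves. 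All the individual steps you use (the automatic inclusion in $O^+(M^{\perp})$, the spinor-norm computation for the extension $\sigma = s_{v_1}\cdots s_{v_m}$, the identification $M_0^{\perp} = M^{\perp}$) are exactly the ones established in the paper for Theorem \ref{p-2-1} and Corollary \ref{c-2-1}, so nothing is missing; your aside about $p(\mathcal{K}') \subset \mathcal{K}_0$ is not actually needed, and your orientation remark (that $\mathcal{K}$ and $\mathcal{K}'$ lie in the same component of $\tilde{\mathscr{C}}_M$, so the distinguished component $\Omega^+_{M^{\perp}}$ is unchanged) is a point the paper passes over silently, so including it is a mild improvement rather than a defect.
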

	
	\begin{proof}
		It suffices to show that $\Gamma_{M^{\perp}, \mathcal{K}'} = \Gamma_{M^{\perp}, \mathcal{K}}$.
		Let $g \in \Gamma_{M^{\perp}, \mathcal{K}'}$.
		There exists an element $\sigma \in \Gamma(\mathcal{K}')$ such that $\sigma|_{M^{\perp}} =g$.
		Since the boundary $\partial \mathcal{K}'$ consists of the rays $\mathbb{R}_{\geqq 0}(3h+2e)$ and $\mathbb{R}_{\geqq 0}(h+e)$ and since $\sigma|_{M}$ preserves $\partial \mathcal{K}'$, we have
		$$
			(\sigma|_{M})(3h+2e)=3h+2e \text{ and } (\sigma|_{M})(h+e)=h+e.
		$$ 
		Hence $\sigma|_{M} = \operatorname{id}_M$.
		Therefore $\sigma \in \Gamma(\mathcal{K})$ and $g = \sigma|_{M^{\perp}} \in \Gamma_{M^{\perp}, \mathcal{K}}$.
		Similarly we have $\Gamma_{M^{\perp}, \mathcal{K}'} \supset \Gamma_{M^{\perp}, \mathcal{K}}$, which completes the proof.
	\end{proof}
	
	Note that a birational transformation of an irreducible holomorphic symplectic manifold preserves its period (\cite[Proposition 25.14]{MR1963559}).
	We have a natural map $\Psi : \tilde{\mathcal{M}}_{M_0} \to \tilde{\mathcal{M}}_{M, \mathcal{K}'}$ defined by
	$$
		\Psi(Y, \sigma) = (\elm_{Y/\sigma}(Y^{[2]}), \elm_{Y/\sigma}(\sigma^{[2]}))  \quad ((Y, \sigma) \in \tilde{\mathcal{M}}_{M_0}).
	$$
	Then the following diagram commutes:
	$$
		\xymatrix{
			\tilde{\mathcal{M}}_{M_0} \ar[r]^{\Psi}  \ar[d]_{\bar{\pi}_{M_0}} & \tilde{\mathcal{M}}_{M, \mathcal{K}'} \ar[d]^{P_{M, \mathcal{K}'}}   \\
			 \mathcal{M}_{M_0} \ar[r]_{\cong} & \mathcal{M}_{M, \mathcal{K}'} . 
		}
	$$

\section{An invariant of $K3^{[2]}$-type manifolds with antisymplectic involution}\label{s-3}

\subsection{Some fundamental properties of hyperk\"ahler manifolds}\label{ss-3-1}

	Throughout this section, we fix an admissible sublattice $M$ of $L_2$ and a K\"ahler-type chamber $\mathcal{K} \in \ktm$.
	
	Let $(X, \iota)$ be a manifold of $K3^{[2]}$-type with involution of type $(M, \mathcal{K})$.
	Choose $\eta \in H^0(X, \Omega_X^2)$ and $\theta \in H^2(X, \mathcal{O}_X)$.
	
	Let $h_{X,0}$ be an $\iota$-invariant Ricci-flat \K metric on $X$
	with \K form $\omega_{X,0}$.
	It is locally written as
	$$
		\omega_{X,0} = \frac{i}{2} \sum_{j,k} h_{X,0} \left( \frac{\partial}{\partial z^j}, \frac{\partial}{\partial z^k} \right) dz^j \wedge d\bar{z}^k .
	$$
	The Riemannian metric associated with $h_{X,0}$ is denoted by $g$.
	The hermitian metric on $\wedge^{p,q} T^*X$ attached to the Ricci-flat \K metric is also donoted by $h_{X,0}$
	
	Let $I$ be the complex structure of $X$.
	Since $(X,g)$ is hyperk\"ahler, there are complex structures $J$ and $K$ of $X$ such that $(X, I, g)$, $(X, J, g)$ and $(X, K, g)$ are manifolds of $K3^{[2]}$-type and 
	$
		IJ=-JI=K.
	$
	The \K forms with respect to $J$ and $K$ are given by
	$$
		\omega_J =g(-, J(-)), \quad \text{and} \quad \omega_K =g(-, K(-)),
	$$
	respectively.
	Set $\sigma_I = \omega_J +i \omega_K$.
	This is a holomorphic 2-form on $X$ (cf. \cite[\S 23]{MR1963559}).
	Since $H^0(X, \Omega_X^2) =\mathbb{C} \eta$, there exists a complex number $\lambda \in \mathbb{C}$ such that $\eta = \frac{\lambda}{2} \sigma_I$.
	Note that the $L^2$-norm of $\eta^2$ is given by
	$$
		||\eta^2||_{L^2}^2 =  \int_X h_{X,0}(\eta^2, \eta^2) \frac{\omega_{X,0}^4}{4!} = \int_X \eta^2 \wedge \bar{\eta}^2 ,
	$$
	and the volume of $(X, \omega_{X,0})$ is defined by
	$$
		\vol(X, \omega_{X,0}) = \int_X \frac{\omega_{X,0}^4}{4!}.
	$$
	Since $\omega_{X,0}$ is Ricci-flat, it follows from \cite[Corollary 23.9]{MR1963559} that
	\begin{align}\label{f-3-1}
		|\lambda|^2 = \frac{1}{2} \left( \frac{\eta^2 \wedge \bar{\eta}^2}{\omega_{X,0}^4/4!} \right)^{\frac{1}{2}} =\frac{1}{2} \left( \frac{||\eta^2||_{L^2}^2}{\vol(X, \omega_{X,0})} \right)^{\frac{1}{2}}.
	\end{align}
	Similarly, there exists a complex number $\mu \in \mathbb{C}$ such that $\theta = \frac{\mu}{2} \bar{\sigma}_I$ in $H^2(X, \mathcal{O}_X)$.
	We identify the cohomology class $\theta$ with its harmonic representative.
	 The $L^2$-norm of $\theta^2$ is given by
	$$
		||\theta^2||_{L^2}^2 = \int_X h_{X,0}(\theta^2, \theta^2) \frac{\omega_{X,0}^4}{4!} = \int_X \theta^2 \wedge \bar{\theta}^2 ,
	$$
	and we have
	\begin{align}\label{f-3-2}
		|\mu|^2  =\frac{1}{2} \left( \frac{||\theta^2||_{L^2}^2}{\vol(X, \omega_{X,0})} \right)^{\frac{1}{2}}.
	\end{align}
	
	\begin{lem}\label{l-3-1}
		For any $\alpha \in A^{1,1}(X)$, the following identity holds:
		$$
			h_{X,0}(\theta \wedge \alpha, \theta \wedge \alpha) = |\mu|^2 h _{X,0}(\alpha, \alpha).
		$$
	\end{lem}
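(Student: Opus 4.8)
The plan is to prove the identity pointwise, reducing it at each point $x\in X$ to a short computation in $\wedge^{\bullet}T_x^*X$ that exploits the hyperk\"ahler structure.

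First I would normalize the data at $x$. Since $(X,g)$ is hyperk\"ahler, $\sigma_I=\omega_J+i\omega_K$ is parallel and $(T_xX,g_x,I_x,J_x,K_x)$ is modeled on standard flat $\mathbb{H}^2$; hence one may choose a unitary coframe $e^1,\dots,e^4$ of $(1,0)$-forms at $x$ in which $\sigma_I|_x=c\,(e^1\wedge e^2+e^3\wedge e^4)$ for some $c\in\mathbb{C}$ (cf. \cite[\S 23]{MR1963559}). The constant is forced: $2|c|^2=\|\sigma_I\|_x^2=\|\omega_J\|_x^2+\|\omega_K\|_x^2=4+4=8$, using that $\omega_J,\omega_K$ are K\"ahler forms of $g$ on the $4$-fold $X$, so $|c|=2$, and after replacing each $e^j$ by a suitable unit scalar multiple we may take $\sigma_I|_x=2(e^1\wedge e^2+e^3\wedge e^4)$. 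The relation $\theta=\frac{\mu}{2}\bar\sigma_I$ established above then gives $\theta|_x=\mu\,(\bar e^1\wedge\bar e^2+\bar e^3\wedge\bar e^4)$, and in this coframe the induced pointwise metric makes $\{e^I\wedge\bar e^J\}$ (for $I,J$ increasing) an orthonormal basis of $\wedge^{|I|,|J|}T_x^*X$.

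Next I would compute directly. Writing $\alpha|_x=\sum_{j,k=1}^4 a_{jk}\,e^j\wedge\bar e^k$, so that $h_{X,0}(\alpha,\alpha)|_x=\sum_{j,k}|a_{jk}|^2$, one has
\[
	\theta\wedge\alpha|_x=\mu\sum_{j,k=1}^4 a_{jk}\;e^j\wedge\bigl((\bar e^1\wedge\bar e^2+\bar e^3\wedge\bar e^4)\wedge\bar e^k\bigr),
\]
and the inner factor $(\bar e^1\wedge\bar e^2+\bar e^3\wedge\bar e^4)\wedge\bar e^k$ equals $\pm\bar e^{I_k}$, where $I_k\subset\{1,2,3,4\}$ is the $3$-element subset complementary to $\{\tau(k)\}$ with $\tau=(1\,2)(3\,4)$. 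Therefore the sixteen forms $e^j\wedge\bar e^{I_k}$ $(1\le j,k\le 4)$ are pairwise distinct orthonormal basis vectors of $\wedge^{1,3}T_x^*X$, and
\[
	h_{X,0}(\theta\wedge\alpha,\theta\wedge\alpha)|_x=|\mu|^2\sum_{j,k=1}^4|a_{jk}|^2=|\mu|^2\,h_{X,0}(\alpha,\alpha)|_x.
\]
Since $x$ is arbitrary, the lemma follows.

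The crux is the first step: the simultaneous normalization of $g_x$ and $\sigma_I|_x$ to their standard shapes is exactly what $Sp(2)$-holonomy provides --- K\"ahlerness alone would not --- and it is this that pins down $|c|=2$ (equivalently $\|\sigma_I\|_x^2=8$), so that the final constant comes out as $|\mu|^2$ rather than a nonzero multiple of it. Once the normal form is in hand, the remainder is just the combinatorics of wedging with a maximal-rank $(0,2)$-form, together with the standard description of the induced metric on forms in a unitary coframe.
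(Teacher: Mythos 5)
Your proof is correct and follows essentially the same route as the paper: a pointwise computation in a coframe adapted to the hyperk\"ahler structure in which $\sigma_I$ takes its standard shape, showing that wedging with $\bar{\sigma}_I$ multiplies the norm of $(1,1)$-forms by a fixed constant, combined with $\theta=\frac{\mu}{2}\bar{\sigma}_I$. The only difference is cosmetic: the paper constructs the adapted frame explicitly from a quaternionic orthonormal basis $e, Ie, Je, Ke, f, If, Jf, Kf$ (working with $h(v^i,v^j)=2\delta_{ij}$), whereas you invoke the $Sp(2)$ normal form abstractly and pin down the constant $|c|=2$ via $\|\sigma_I\|^2=\|\omega_J\|^2+\|\omega_K\|^2=8$; both normalizations give the same factor $|\mu|^2$.
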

	
	\begin{proof} 
		Fix $p \in X$.
		Since $(X, g)$ is hyperk\"ahler, the real tangent space $T_{p, \mathbb{R}}X$ at $p$ is equipped with the structure of a quaternionic hermitian vector space
		(see \cite[\S 23.2]{MR1963559}).
		Therefore, there exist two tangent vector $e, f \in T_{p, \mathbb{R}}X$ such that 
		$$
			e, Ie, Je, Ke, f, If, Jf, Kf
		$$
		form an orthonormal basis of $(T_{p, \mathbb{R}}X, g)$.
		Set 
		$$
			v_1 =\frac{ e-iIe }{2}, \quad v_2 =\frac{ -Ke-iJe }{2}, \quad v_3 =\frac{ f-iIf }{2}, \quad v_4 =\frac{ -Kf-iJf }{2}.
		$$
		Then $v_1, v_2, v_3, v_4$ are of type $(1,0)$ with respect to the complex structure $I$
		and form a $\mathbb{C}$-basis of the holomorphic tangent space $T_pX$ at $p$.
		Moreover, we have
		$$
			h_{X,0}(v_i, v_j) = \frac{1}{2} \delta_{ij}.
		$$
		Let $v^1, v^2, v^3, v^4$ be the dual basis. 
		Then we have
		$$
			h_{X,0}(v^i, v^j) = 2\delta_{ij}, \quad \text{and} \quad \sigma_I=i v^1 \wedge v^2 +i v^3 \wedge v^4.
		$$
		
		Let $\alpha = \frac{1}{2} \sum_{i,j} \alpha_{ij} v^i \wedge \bar{v}^j \in \wedge^{1,1}T_p^*X$.
		Then we have
		$$
			h_{X,0}(\bar{\sigma}_I \wedge \alpha, \bar{\sigma}_I \wedge \alpha) = 4\sum_{i,j} |\alpha_{ij}|^2 = 4 h _{X,0}(\alpha, \alpha),
		$$
		which completes the proof.
	\end{proof}

\subsection{Construction of an invariant}\label{ss-3-2}

	We now turn to the case where the $\iota$-invariant \K metric $h_X$ is not necessarily Ricci-flat. 
	
	The \K form attached to $h_X$ is locally defined by
	$$
		\omega_X = \frac{i}{2} \sum_{j,k} h_X \left( \frac{\partial}{\partial z^j}, \frac{\partial}{\partial z^k} \right) dz^j \wedge d\bar{z}^k ,
	$$
	where $z^1, \dots, z^4$ is a local coordinate on $X$.
	The volume of $(X, \omega_{X})$ is defined by
	$$
		\vol(X, \omega_{X}) = \int_X \frac{\omega_{X}^4}{4!}.
	$$
	Set $\omega_{X^{\iota}} = \omega_X|_{X^{\iota}}$.
	This is a \K form on $X^{\iota}$ attached to $h_{X^{\iota}} =h_X|_{X^{\iota}}$.
	Recall that $X^{\iota}$ is a possibly disconnected compact complex surface.
	Let $X^{\iota} = \sqcup_i Z_i$ be the decomposition into the connected components.
	We define the volume of $(X^{\iota}, \omega_{X^{\iota}})$ by
	$$
		\vol(X^{\iota}, \omega_{X^{\iota}}) =\prod_i \vol(Z_i, \omega_X|_{Z_i}) = \prod_i \int_{Z_i} \frac{(\omega_X|_{Z_i})^2}{2!}.
	$$
	The covolume of the lattice $\operatorname{Im}(H^1(X^{\iota}, \mathbb{Z}) \to H^1(X^{\iota}, \mathbb{R}))$ with respect to the $L^2$-metric induced from $h_X$ is denoted by $\vol_{L^2}(H^1(X^{\iota}, \mathbb{Z}), \omega_{X^{\iota}})$ .
	Namely, 
	$$
		\vol_{L^2}(H^1(X^{\iota}, \mathbb{Z}), \omega_{X^{\iota}}) = \det(\langle e_i, e_j \rangle_{L^2}),
	$$
	where $e_1, \dots ,e_{b_1(X^{\iota})}$ is an integral basis of $\operatorname{Im}(H^1(X^{\iota}, \mathbb{Z}) \to H^1(X^{\iota}, \mathbb{R}))$.
	
	We define a real-valued function $\varphi$ on $X^{\iota}$ by
	$$
		\varphi = \frac{||\eta^2||_{L^2}^2}{\eta^2 \wedge \bar{\eta}^2} \frac{\omega_{X}^4/4!}{\vol(X, \omega_{X})}.
	$$
	Obviously, $\varphi$ is independent of the choice of $\eta$.
	We define a positive number $A(X, \iota, h_X) \in \mathbb{R}_{>0}$ by
	$$
		A(X, \iota, h_X) =\exp \left[ \frac{1}{48} \int_{X^{\iota}} (\log \varphi) \Omega \right],
	$$
	where $\Omega$ is a characteristic form on $X^{\iota}$ defined by
	$$
		\Omega = c_1(TX^{\iota}, h_{X^{\iota}})^2 -8c_2(TX^{\iota}, h_{X^{\iota}}) -c_1(TX, h_X)|^2_{X^{\iota}} +3c_2(TX, h_X)|_{X^{\iota}}.
	$$
	Here we denote by $c_i(TX, h_X)$, $c_i(TX^{\iota}, h_{X^{\iota}})$ the $i$-th Chern form of the hermitian holomorphic vector bundles $(TX, h_X)$, $(TX^{\iota}, h_{X^{\iota}})$, respectively.
	Note that if $h_X$ is Ricci-flat, then we have $\varphi =1$ and $A(X, \iota, h_X) =1$.
	
	Recall that $t=\operatorname{Tr}(\iota^*|_{H^{1,1}(X)})$.
	By the definition of the admissible sublattice $(M, \iota_M)$, we have $t= \operatorname{Tr}(\iota_M)+2$.
	Therefore $t$ depends only on $(M, \iota_M)$, and is independent of $(X, \iota)$ itself.
	
	Let $\tau_{\iota}(\bar{\Omega}_X^1)$ be the equivariant analytic torsion of the cotangent bundle $\bar{\Omega}_X^1 =(\Omega_X^1, h_X)$ endowed with the hermitian metric induced from $h_X$,
	and let $\tau(\bar{\mathcal{O}}_{X^{\iota}})$ be the analytic torsion of the trivial bundle $\bar{\mathcal{O}}_{X^{\iota}}$ with respect to the canonical metric.
	
	\begin{dfn}\label{d-3-1}
		We define a real number $\tau_{M, \mathcal{K}}(X, \iota)$ by
		\begin{align*}
			\tau_{M, \mathcal{K}}(X, \iota)=\tau_{\iota}(\bar{\Omega}_X^1) &\vol(X, \omega_{X})^{\frac{(t-1)(t-7)}{16}} A(X, \iota, h_X) \\
			&\cdot \tau(\bar{\mathcal{O}}_{X^{\iota}})^{-2} \vol(X^{\iota}, \omega_{X^{\iota}})^{-2} \vol_{L^2}(H^1(X^{\iota}, \mathbb{Z}), \omega_{X^{\iota}}).
		\end{align*}
	\end{dfn}

\subsection{Properties of $\tau_{M, \mathcal{K}}$}
	
	Let $\X$ be a complex manifold with holomorphic involution $\iota : \X \to \X$,
	let $S$ be a complex manifold,
	and let $f : (\X, \iota) \to S$ be a family of $K3^{[2]}$-type manifolds with involution of type $(M, \mathcal{K})$.
	
	The fixed locus of $\iota : \X \to \X$ is denoted by $\XX = \{ x \in \X ; \iota(x) =x \}$.
	The restriction of $f : \X \to S$ to $\XX$ is also denoted by $f : \XX \to S$
	and it is a family of smooth complex surfaces.
	
	Let $h_{\xs}$ be an $\iota$-invariant fiberwise K\"ahler metric on $T\xs$.
	For $s \in S$, the K\"ahler form associated with the K\"ahler metric $h_s = h_{\xs}|_{X_s}$ is denoted by $\omega_s$.
	We set $\omega_{\xs} = \{ \omega_s \}_{s \in S}$.
	Let $h_{\xss}$ and $h_N$ be the induced and quotient metric on $T\xss$ and $N_{\XX/\X}$, respectively.
	The $\iota$-invariant hermitian metric on $\Omega^1_{\xs}$ induced from $h_{\xs}$ is also denoted by $h_{\xs}$.
	
	Let $E(\pm 1)$ be the $(\pm 1)$-eigenbundle of the $\mu_2$-action of $T{\xs}|_{\mathscr{X}^{\iota}}$.
	They are holomorphic vector bundles on $\mathscr{X}^{\iota}$, 
	and the decomposition $T{\xs}|_{\mathscr{X}^{\iota}} = E(+1) \oplus E(-1)$ is orthogonal with respect to the metric $h_{\xs}$.
	The restriction of $h_{\xs}$ to $E(\pm 1)$ is denoted by $h_{\pm}$.
	Then we have two isometries 
	\begin{align}\label{f-3-A}
		(T{\xss}, h_{\xss}) \cong (E(+1), h_+) \quad \text{and} \quad (N_{\mathscr{X}^{\iota}/ \mathscr{X}}, h_N) \cong (E(-1), h_-).
	\end{align}
	Hence the following short exact sequence of holomorphic hermitian vector bundles splits
	\begin{align}\label{f-3-3}
		0 \to (T{\xss}, h_{\xss}) \to (T{\xs}, h_{\xs})|_{\mathscr{X}^{\iota}} \to (N_{\mathscr{X}^{\iota}/ \mathscr{X}}, h_N) \to 0.
	\end{align}
	
	We set
	$$
		\overline{T}{\xss} = (T{\xss}, h_{\xss}), \overline{T}{\xs}=(T{\xs}, h_{\xs}) , \text{and } \overline{N}_{\mathscr{X}^{\iota}/ \mathscr{X}}=(N_{\mathscr{X}^{\iota}/ \mathscr{X}}, h_N).
	$$
	Let
	$$
		c_i(\overline{T}{\xss}), \quad c_i(\overline{T}{\xs}), \quad \text{and} \quad c_i(\overline{N}_{\mathscr{X}^{\iota}/ \mathscr{X}})
	$$
	be their Chern forms, respectively.
	
	\begin{lem}\label{l-3-2}
		The following identities hold:
		$$
			\az = -\ax+\au,
		$$
		$$
			\aw = \ax^2 -\ay -\ax\au +\av.
		$$
	\end{lem}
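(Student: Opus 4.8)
The plan is to derive both identities from the orthogonal splitting of the short exact sequence (\ref{f-3-3}) together with the multiplicativity of Chern forms. By the isometries (\ref{f-3-A}) and the orthogonal splitting of (\ref{f-3-3}), the restriction $\overline{T}{\xs}|_{\XX}$ is isomorphic, as a holomorphic hermitian vector bundle, to the orthogonal direct sum $\overline{T}{\xss} \oplus \overline{N}_{\mathscr{X}^{\iota}/\mathscr{X}}$. Since the Chern connection of an orthogonal direct sum of hermitian holomorphic vector bundles is the direct sum of the Chern connections, its curvature is block-diagonal, and hence the Whitney product formula holds already at the level of Chern--Weil forms:
$$
	c(\overline{T}{\xs})|_{\XX} = c(\overline{T}{\xss}) \cdot c(\overline{N}_{\mathscr{X}^{\iota}/\mathscr{X}}).
$$
Because $\xss$ is a family of complex surfaces, $\overline{T}{\xss}$ has rank $2$, so $c(\overline{T}{\xss}) = 1 + \ax + \ay$; and because $\xs$ is a family of fourfolds, $\overline{N}_{\mathscr{X}^{\iota}/\mathscr{X}}$ has rank $2$, so $c(\overline{N}_{\mathscr{X}^{\iota}/\mathscr{X}}) = 1 + \az + \aw$.

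Next I compare the components of each fixed degree on the two sides. Matching the degree-$2$ parts (the first Chern forms) gives $\au = \ax + \az$, which rearranges to the first identity $\az = -\ax + \au$. Matching the degree-$4$ parts (the second Chern forms) gives $\av = \ay + \ax\cdot\az + \aw$.

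Finally, substituting the first identity into $\ax\cdot\az = -\ax^2 + \ax\cdot\au$ and solving the degree-$4$ relation for $\aw$ yields
$$
	\aw = \av - \ay - \ax\cdot\az = \ax^2 - \ay - \ax\cdot\au + \av,
$$
which is the second identity. There is essentially no obstacle here; the only point meriting care is that the Whitney formula is being invoked at the level of differential forms rather than merely cohomology classes, which is legitimate precisely because (\ref{f-3-3}) splits orthogonally, so no secondary Bott--Chern correction term appears.
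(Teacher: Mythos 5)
Your proof is correct and follows the same route as the paper: the orthogonal splitting of (\ref{f-3-3}) gives the Whitney formula $c(\overline{T}{\xs})|_{\XX}=c(\overline{T}{\xss})\,c(\overline{N}_{\mathscr{X}^{\iota}/\mathscr{X}})$ at the level of Chern forms, and comparing degree-$2$ and degree-$4$ components yields both identities. The paper states exactly this in one line; your additional remark about why no Bott--Chern correction appears is accurate but not a different argument.
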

	
	\begin{proof}
		Since the short exact sequence (\ref{f-3-3}) splits, we have
		$$
			c(\overline{T}{\xs})|_{\mathscr{X}^{\iota}} = c(\overline{T}{\xss} )c(\overline{N}_{\mathscr{X}^{\iota}/ \mathscr{X}}),
		$$ 
		and this implies the statement.
	\end{proof}
	
	We define a characteristic form $\Omega \in A^{2,2}(\XX)$ by
	$$
		\Omega= \ax^2 -8\ay -\au^2 +3\av.
	$$
	
	\begin{lem}\label{l-3-3}
		The following identities holds in $A^{3,3}(\XX)$:
		$$
			\left[ Td_{\iota}(\overline{T}{\xs}) ch_{\iota}(\overline{\Omega}^1_{\xs}) \right]^{(3,3)} =2\left[ Td(\overline{T}{\xss} ) \right]^{(3,3)} +\frac{1}{48} \au \wedge \Omega.
		$$
	\end{lem}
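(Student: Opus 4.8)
The plan is to unwind the definitions (\ref{al-1-A})--(\ref{al-1-B}) of the equivariant Todd and Chern character forms on $\XX$ by means of the isometric eigenbundle decomposition, and then to compare the components of bidegree $(3,3)$ on both sides.

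First I would use the isometries (\ref{f-3-A}) together with the splitting (\ref{f-3-3}): they give an orthogonal decomposition $\overline{T}{\xs}|_{\XX} = \overline{T}{\xss} \oplus \overline{N}_{\XX/\X}$ in which $\overline{T}{\xss}$ is the $(+1)$-eigenbundle and $\overline{N}_{\XX/\X}$ the $(-1)$-eigenbundle of the $\mu_2$-action, and dually the $(+1)$- and $(-1)$-eigenbundles of $\Omega^1_{\xs}|_{\XX}$ are $\overline{\Omega}^1_{\xss}=(\overline{T}{\xss})^\vee$ and the conormal bundle $\overline{N}^\vee_{\XX/\X}$. Plugging these into (\ref{al-1-A})--(\ref{al-1-B}) yields
$$
	Td_{\iota}(\overline{T}{\xs}) = Td(\overline{T}{\xss})\cdot\det\!\left(\frac{I}{I+\exp(\frac{R_-}{2\pi i})}\right), \qquad ch_{\iota}(\overline{\Omega}^1_{\xs}) = ch(\overline{\Omega}^1_{\xss}) - ch(\overline{N}^\vee_{\XX/\X}),
$$
where $R_-$ is the curvature of $(\overline{N}_{\XX/\X}, h_N)$. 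I note that $ch_{\iota}(\overline{\Omega}^1_{\xs})$ has no component of degree $0$, so in the product only the components of $Td_{\iota}(\overline{T}{\xs})$ in degrees $\leq 2$ and those of $ch_{\iota}(\overline{\Omega}^1_{\xs})$ in degrees $1,2,3$ contribute to the part of bidegree $(3,3)$.

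Next I would expand everything via the splitting principle. Writing $a_1,a_2$ for the Chern roots of $\overline{T}{\xss}$, $n_1,n_2$ for those of $\overline{N}_{\XX/\X}$, and $\phi(t)=\frac{t}{1-e^{-t}}$, $\psi(t)=\frac{1}{1+e^{-t}}$, one has $Td_{\iota}(\overline{T}{\xs})=\phi(a_1)\phi(a_2)\psi(n_1)\psi(n_2)$ and $ch_{\iota}(\overline{\Omega}^1_{\xs})=(e^{-a_1}+e^{-a_2})-(e^{-n_1}+e^{-n_2})$. The identities $\phi(t)e^{-t}=\phi(-t)$ and $\psi(t)e^{-t}=\psi(-t)$ convert $\phi(a_1)\phi(a_2)(e^{-a_1}+e^{-a_2})$ into $\phi(-a_1)\phi(a_2)+\phi(a_1)\phi(-a_2)$, and similarly for the $\psi$-factor; since $\phi(t)=1+\frac t2+\frac{t^2}{12}+O(t^4)$ and $\psi(t)=\frac12+\frac t4+0\cdot t^2-\frac{t^3}{48}+O(t^4)$, a short expansion shows that these two symmetric combinations have vanishing components in degrees $1$ and $3$. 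A short computation then gives the degree-$3$ part of the product as a polynomial in $c_1(\overline{T}{\xss}),\,c_2(\overline{T}{\xss})$ and $\az=c_1(\overline{N}_{\XX/\X})$, $\aw=c_2(\overline{N}_{\XX/\X})$.

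Finally I would substitute the relations of Lemma \ref{l-3-2}, namely $\az=-\ax+\au$ and $\aw=\ax^2-\ay-\ax\au+\av$, to eliminate $\az$ and $\aw$. A direct simplification should collapse the degree-$3$ component to
$$
	\frac{1}{48}\,\au\wedge\bigl(\ax^2-8\ay-\au^2+3\av\bigr)+\frac{1}{12}\,\ax\wedge\ay,
$$
which equals $\frac{1}{48}\,\au\wedge\Omega+2\bigl[Td(\overline{T}{\xss})\bigr]^{(3,3)}$ since $\bigl[Td(\overline{T}{\xss})\bigr]^{(3,3)}=\frac{1}{24}\,\ax\wedge\ay$ for a rank-$2$ hermitian bundle; this is exactly the asserted identity. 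The computation is elementary, and the only place where care is really needed is the bookkeeping in the first two steps: getting the sign conventions of (\ref{al-1-A})--(\ref{al-1-B}) right --- in particular recognizing the $(-1)$-eigenbundle of $\Omega^1_{\xs}|_{\XX}$ as the conormal bundle $\overline{N}^\vee_{\XX/\X}$, not $\overline{N}_{\XX/\X}$ --- and expanding the normal-bundle factor $\psi(n_1)\psi(n_2)=\det\bigl(I/(I+\exp(\frac{R_-}{2\pi i}))\bigr)$ correctly up to degree $3$. The vanishing of the degree-$0$ term of $ch_{\iota}(\overline{\Omega}^1_{\xs})$ and of the odd-degree terms in the $\phi$- and $\psi$-expansions keeps the number of cross terms small, so no genuine difficulty arises.
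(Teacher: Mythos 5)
Your proposal is correct and follows essentially the same route as the paper: both use the eigenbundle isometries to factor $Td_{\iota}(\overline{T}{\xs})$ and split $ch_{\iota}(\overline{\Omega}^1_{\xs})$ as $ch(\overline{\Omega}^1_{\xss})-ch(\overline{N}^{\vee}_{\XX/\X})$, expand to the $(3,3)$ component, and then eliminate $\az,\aw$ via Lemma \ref{l-3-2}, arriving at exactly the paper's intermediate and final expressions (the degree-$3$ part $\frac{1}{48}\au\wedge\Omega+\frac{1}{12}\ax\ay$ with $[Td(\overline{T}{\xss})]^{(3,3)}=\frac{1}{24}\ax\ay$). Your use of the splitting principle and the identities $\phi(t)e^{-t}=\phi(-t)$, $\psi(t)e^{-t}=\psi(-t)$ is only an organizational shortcut over the paper's direct term-by-term multiplication, not a different argument.
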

	
	\begin{proof}
		By the construction of the equivariant Todd form (\ref{al-1-A}) and by the isometries (\ref{f-3-A}), we have
		\begin{align}\label{al-3-A}
			Td_{\iota}(\overline{T}{\xs}) = Td(\overline{T}{\xss})  \det \left( \frac{I}{I+ \exp({+\frac{R_-}{2\pi i}})} \right), 
		\end{align}
		where $R_-$ is the curvature form of $(N_{\mathscr{X}^{\iota}/ \mathscr{X}}, h_N)$.
		By the isometries (\ref{f-3-A}), $\Omega^1_{\xss}$ is the $(+1)$-eigenbundle of $\Omega^1_{\xs}|_{\mathscr{X}^{\iota}}$ and $N^{\vee}_{\mathscr{X}^{\iota}/ \mathscr{X}}$ is the $(-1)$-eigenbundle of $\Omega^1_{\xs}|_{\mathscr{X}^{\iota}}$.
		By the construction of the equivariant Chern character form (\ref{al-1-B}), we have
		\begin{align}\label{al-3-D}
			ch_{\iota}(\overline{\Omega}^1_{\xs}) =ch(\overline{\Omega}^1_{\xss}) -ch(\overline{N}^{\vee}_{\mathscr{X}^{\iota}/ \mathscr{X}}). 
		\end{align} \par
		By the definition of Todd form and Chern form, we have
		\begin{align}\label{al-3-B}
		\begin{aligned}
			Td(\overline{T}{\xss}) = 1 &+\frac{1}{2}\ax +\frac{1}{12}\{ \ax^2+\ay \} \\
			& +\frac{1}{24} \ax\ay +\text{higher degree terms}.
		\end{aligned}
		\end{align}
		Since $\frac{1}{1+e^{-x}} = \frac{1}{2} +\frac{x}{4} +0x^2 +\dots$, the following identity of functions of $2 \times 2$ matrices holds:
		$$
			\det \left( \frac{I}{I+ \exp(-A)} \right) = \frac{1}{4} + \frac{1}{8} c_1(A) 
			 +\frac{1}{16} c_2(A) +\text{higher degree terms} .
		$$
		Therefore, we have
		\begin{align}\label{al-3-C}
		\begin{aligned}
			\det \left( \frac{I}{I+ \exp({+\frac{R_-}{2\pi i}})} \right)
			=\frac{1}{4} +\frac{1}{8}\az +\frac{1}{16}\aw +\text{higher degree terms}.	
		\end{aligned}
		\end{align}
		By (\ref{al-3-A}), (\ref{al-3-B}), (\ref{al-3-C}), we have
		\begin{align}\label{f-3-4}
		\begin{aligned}
			&[Td_{\iota}(\overline{T}{\xs})]^{(0,0)} =\frac{1}{4}, \\
			&[Td_{\iota}(\overline{T}{\xs})]^{(1,1)} =\frac{1}{8} \{ \ax +\az \}, \\
			&[Td_{\iota}(\overline{T}{\xs})]^{(2,2)} =\frac{1}{48} \{ \ax^2 +\ay +3\ax\az +3\aw \}.
		\end{aligned}
		\end{align}
		On the other hand, we have
		\begin{align}\label{al-3-E}
		\begin{aligned}
			ch(\overline{\Omega}^1_{\xss})=& 2 -\ax +\frac{1}{2}\{ \ax^2 -2\ay \} \\
			&-\frac{1}{6}\{ \ax^3 -3\ax\ay \} +\text{higher degree terms},
		\end{aligned}
		\end{align}
		and
		\begin{align}\label{al-3-F}
		\begin{aligned}
			ch(\overline{N}^{\vee}_{\mathscr{X}^{\iota}/ \mathscr{X}}) =& 2 -\az +\frac{1}{2}\{ \az^2-2\aw \} \\
			&-\frac{1}{6}\{ \az^3 -3\az\aw \} +\text{higher degree terms}
		\end{aligned}
		\end{align}
		By (\ref{al-3-D}), (\ref{al-3-E}), (\ref{al-3-F}), we  have
		\begin{align}\label{f-3-5}
		\begin{aligned}
			&[ch_{\iota}(\overline{\Omega}^1_{\xs})]^{(0,0)} = 0 ,\\
			&[ch_{\iota}(\overline{\Omega}^1_{\xs})]^{(1,1)} = -\ax +\az ,\\
			&[ch_{\iota}(\overline{\Omega}^1_{\xs})]^{(2,2)} = \frac{1}{2}\{ \ax^2 -2\ay -\az^2 +2\aw \} ,\\
			&[ch_{\iota}(\overline{\Omega}^1_{\xs})]^{(3,3)} = -\frac{1}{6}\{ \ax^3 -3\ax\ay \\
			&\hspace{120pt} -\az^3 +3\az\aw \}.
		\end{aligned}
		\end{align}
		Combining the formulas (\ref{f-3-4}) and (\ref{f-3-5}), we obtain
		\begin{align*}
			&\left[ Td_{\iota}(\overline{T}{\xs}) ch_{\iota}(\overline{\Omega}^1_{\xs}) \right]^{(3,3)} \\
			&=\frac{1}{48} \left\{  \ax^2\az -\az^3 -\ax\ay \right. \\
			&\qquad \qquad \left. +3\ax\aw +3\az\aw -5\az\ay \right\}.
		\end{align*}
		By Lemma \ref{l-3-2} and by (\ref{al-3-B}), we obtain the desired formula.
	\end{proof}
	
	Consider the direct image sheaf $f_* K_{\xs}$ of the relative canonical bundle $K_{\xs}$.
	This is a holomorphic line bundle on $S$ and is equipped with the $L^2$-metric $h_{L^2}$.
	On the other hand, $K_{\xs}$ is equipped with the hermitian metric $h$ induced from the fiberwise \K metric $h_{\xs}$.
	We define a smooth function $\varphi$ on $S$ by
	$$
		\varphi = \frac{ \omega^4_{\xs} /4! }{ \eta^2 \wedge \bar{\eta}^2 } \frac{ || \eta^2 ||_{L^2}^2 }{ \vol \left( \xs, \omega_{\xs} \right) },
	$$
	where $\eta$ is a nowhere-vanishing holomorphic section of $f_* \Omega^2_{\xs}$.
	Then $\varphi$ is independent of the choice of $\eta$.
	The evaluation map $f^*f_*K_{\xs} \to K_{\xs}$ is an isomorphism and we have
	\begin{align}\label{f-3-6}
		c_1(K_{\xs}, h) =f^* c_1( f_*K_{\xs}, h_{L^2}) +dd^c \log \varphi + dd^c \log \vol \left( \xs, \omega_{\xs} \right)
	\end{align}
	in $A^{1,1}(\XX)$.
	We define a function $A(\xs)$ on $S$ by
	$$
		A(\xs)(s) = A(X_s, \iota_s, h_s) \qquad (s \in S).
	$$
	
	\begin{lem}\label{l-3-1-A}
		The following identity holds:
		$$
			\int_{\xss} \Omega =-3(t^2+7)
		$$
	\end{lem}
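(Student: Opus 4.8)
The plan is to reduce the identity to a fiberwise computation on the surfaces $X_s^{\iota}$ and then to combine Lemma \ref{l-3-2}, the Lagrangian condition, and the intersection numbers of Lemma \ref{l-2-12}. Since $\Omega = \ax^2 - 8\ay - \au^2 + 3\av$ is a closed form of bidegree $(2,2)$ and the fibers of $f \colon \XX \to S$ are compact complex surfaces, the fiber integral $\int_{\XX}\Omega$ is a smooth function on $S$ whose value at $s$ is $\int_{X_s^{\iota}}\Omega|_{X_s^{\iota}}$; as the restriction of a closed $4$-form to a compact surface, this value depends only on the de Rham classes of the Chern forms occurring in $\Omega$. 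Hence it suffices to prove $\int_{X^{\iota}}\Omega = -3(t^2+7)$ for a single $(X,\iota)$ of type $(M,\mathcal{K})$, and I may work throughout in $H^{\bullet}(X^{\iota},\mathbb{R})$.

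First I would use that $X^{\iota}$ is Lagrangian, by Lemma \ref{l-2-12}(1): the holomorphic symplectic form of $X$ induces an isomorphism $N_{X^{\iota}/X}\cong\Omega^1_{X^{\iota}}$, so that $\az = -c_1(TX^{\iota})$ and $\aw = c_2(TX^{\iota})$ in $H^{\bullet}(X^{\iota},\mathbb{R})$. Substituting these into the two identities of Lemma \ref{l-3-2} gives, in cohomology,
$$\au = 0, \qquad \av = 2\ay - \ax^2;$$
the first is also a consequence of the triviality of $K_X$, and the second reads $c_2(TX)|_{X^{\iota}} = 2c_2(TX^{\iota}) - c_1(TX^{\iota})^2$, whose integral over $X^{\iota}$ is $2\cdot\frac{t^2+23}{2} - (t^2-1) = 24$ by Lemma \ref{l-2-12}(2).

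Feeding $\au^2 = 0$ and $\av = 2\ay - \ax^2$ into $\Omega$ yields, in $H^{4}(X^{\iota},\mathbb{R})$,
$$\Omega = \ax^2 - 8\ay + 3(2\ay - \ax^2) = -2\ax^2 - 2\ay,$$
and integrating over $X^{\iota}$ with $\int_{X^{\iota}}c_1(TX^{\iota})^2 = t^2-1$ and $\int_{X^{\iota}}c_2(TX^{\iota}) = \frac{t^2+23}{2}$ (Lemma \ref{l-2-12}(2)) gives
$$\int_{X^{\iota}}\Omega = -2(t^2-1) - 2\cdot\frac{t^2+23}{2} = -3t^2-21 = -3(t^2+7).$$

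The computation is short, so there is no real obstacle. The one point deserving care is the passage from the form-level splitting (\ref{f-3-3}) and Lemma \ref{l-3-2} to the cohomological relation $c_2(TX)|_{X^{\iota}} = 2c_2(TX^{\iota}) - c_1(TX^{\iota})^2$: this is precisely where the Lagrangian hypothesis is used, via the isomorphism $N_{X^{\iota}/X}\cong\Omega^1_{X^{\iota}}$ which pins down $c_1$ and $c_2$ of the normal bundle. Everything else is substitution into the two intersection numbers already recorded in Lemma \ref{l-2-12}.
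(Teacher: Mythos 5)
Your proposal is correct and follows essentially the same route as the paper: the Lagrangian isomorphism $N_{\XX/\X}\cong\Omega^1_{\xss}$, the triviality of the canonical bundle, the splitting relations of Lemma \ref{l-3-2}, and the intersection numbers of Lemma \ref{l-2-12}(2) are exactly the ingredients the paper uses, and the arithmetic agrees. The only difference is presentational (you first simplify $\Omega$ cohomologically to $-2\ax^2-2\ay$ and make the reduction to a single fiber explicit, while the paper computes the three fiber integrals separately), which does not change the argument.
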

	
	\begin{proof}
		By Lemma \ref{l-2-12}, we have
		\begin{align}\label{al-3-2-A}
			\int_{\xss} c_1(T\xss)^2 = t^2-1, \quad \int_{\xss} c_2(T\xss) = \frac{t^2+23}{2}.
		\end{align}
		Since the canonical bundle of a irreducible holomorphic symplectic manifold is trivial, we have
		\begin{align}\label{al-3-3-A}
			\int_{\xss} c_1(T\xs)|_{\XX}^2 =0.
		\end{align}
		Since a holomorphic symplectic form induces an isomorphism $N_{\XX/\X} \cong \Omega^1_{\xss}$, we have $c_1(N_{\XX/\X}) = -c_1(T\xss)$.
		By Lemma \ref{l-3-2}, we have $c_2(T\xs)|_{\XX} = 2c_2(T\xss) -c_1(T\xss)^2$.
		Hence
		\begin{align}\label{al-3-4-A}
			\int_{\xss} c_2(T\xs)|_{\XX} = 24.
		\end{align}
		By (\ref{al-3-2-A}), (\ref{al-3-3-A}), (\ref{al-3-4-A}), we have the desired result.
	\end{proof}
	
	Let $f_*$ be the integration along the fibers of $f : \XX \to S$.
	
	\begin{prop}\label{p-3-1}
		The following identity holds:
		\begin{align*}
			&\left[ f_* \left( Td_{\iota}(\overline{T}{\xs}) ch_{\iota}(\overline{\Omega}^1_{\xs}) \right) \right]^{(1,1)} \\
			&=2\left[ f_* Td(\overline{T}{\xss}) \right]^{(1,1)} -dd^c \log A(\xs) \\
			&\quad +\frac{t^2+7}{16} c_1(f_*K_{\xs}, h_{L^2}) +\frac{t^2+7}{16} dd^c \log \vol \left( \xs, \omega_{\xs} \right) .
		\end{align*}
	\end{prop}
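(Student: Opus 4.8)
The plan is to push the fiberwise identity of Lemma~\ref{l-3-3} forward along $f : \XX \to S$ and then to identify the resulting error term $\frac{1}{48} f_*(\au \wedge \Omega)$ by means of the Chern form formula (\ref{f-3-6}) for $K_{\xs}$ and of Lemma~\ref{l-3-1-A}. Since the fibers of $f : \XX \to S$ are complex surfaces, integration along the fibers lowers bidegree by $(2,2)$, so $[f_*\gamma]^{(1,1)} = f_*([\gamma]^{(3,3)})$ for every form $\gamma$ on $\XX$. Applying $f_*$ to Lemma~\ref{l-3-3} therefore gives
\begin{align*}
  \left[ f_*\!\left( Td_{\iota}(\overline{T}{\xs}) ch_{\iota}(\overline{\Omega}^1_{\xs}) \right) \right]^{(1,1)} = 2\left[ f_* Td(\overline{T}{\xss}) \right]^{(1,1)} + \tfrac{1}{48}\, f_*\!\left( \au \wedge \Omega \right),
\end{align*}
and it remains to show that $\frac{1}{48} f_*(\au \wedge \Omega)$ equals $-dd^c\log A(\xs) + \frac{t^2+7}{16}\bigl( c_1(f_*K_{\xs}, h_{L^2}) + dd^c\log\vol(\xs,\omega_{\xs}) \bigr)$.

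For this I would use $c_1(\overline{T}{\xs}) = -c_1(K_{\xs}, h)$, so that (\ref{f-3-6}), restricted to $\XX$, expresses $\au$ as the sum of $-f^* c_1(f_*K_{\xs}, h_{L^2})|_{\XX}$, $-dd^c\log\varphi|_{\XX}$, and $-f^* dd^c\log\vol(\xs,\omega_{\xs})|_{\XX}$, and I would treat the three contributions to $f_*(\au \wedge \Omega)$ separately. For the two pulled-back terms the projection formula gives $f_*(f^*\alpha \wedge \Omega) = \alpha \cdot f_*\Omega$, while by Lemma~\ref{l-3-1-A} the function $f_*\Omega = \int_{\xss}\Omega$ is the constant $-3(t^2+7)$; since $\tfrac{3(t^2+7)}{48} = \tfrac{t^2+7}{16}$, these yield precisely the two terms with coefficient $\tfrac{t^2+7}{16}$. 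For the middle term I would note that $\Omega$, being a polynomial in the Chern forms of $\overline{T}{\xss}$ and of $\overline{T}{\xs}|_{\XX}$, is $d$-closed, and, being of pure bidegree $(2,2)$, is also $d^c$-closed; hence, writing $\psi = \log\varphi|_{\XX}$, one has $dd^c(\psi\,\Omega) = (dd^c\psi) \wedge \Omega$, and commuting $f_*$ with $dd^c$ gives
\begin{align*}
  -\tfrac{1}{48}\, f_*\!\left( dd^c\psi \wedge \Omega \right) = -\tfrac{1}{48}\, dd^c\!\int_{\xss} (\log\varphi)\,\Omega = -\,dd^c\log A(\xs),
\end{align*}
the last equality being the definition of $A(\xs)$, since the function $\varphi$ of (\ref{f-3-6}) restricts on each fiber $X_s$ to the function $\varphi$ used to define $A(X_s, \iota_s, h_s)$. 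Summing the three contributions completes the proof.

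The only genuinely delicate point is the middle term: one has to use that $\Omega$ is not merely closed but $dd^c$-closed---this is exactly where its being of pure bidegree $(2,2)$ enters---so that $\Omega$ can be absorbed inside $dd^c$ and $f_*$ commuted through it, and one has to match the fiberwise function $\varphi$ of (\ref{f-3-6}) with the one entering $A(X_s,\iota_s,h_s)$. The rest is the projection formula, the constancy of $\int_{\xss}\Omega$ supplied by Lemma~\ref{l-3-1-A}, and the numerical identity $3(t^2+7)/48 = (t^2+7)/16$.
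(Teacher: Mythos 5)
Your proposal is correct and follows essentially the same route as the paper: Lemma \ref{l-3-3} combined with the decomposition (\ref{f-3-6}) of $c_1(K_{\xs},h)=-c_1(\overline{T}\xs)$, the projection formula, and the constant $\int_{\xss}\Omega=-3(t^2+7)$ from Lemma \ref{l-3-1-A}, with $3(t^2+7)/48=(t^2+7)/16$. The only difference is that you push forward before substituting and you make explicit the point the paper leaves implicit, namely that $\Omega$ is closed (hence, being of pure bidegree, $d^c$-closed), so that $dd^c(\log\varphi)\wedge\Omega=dd^c\bigl((\log\varphi)\,\Omega\bigr)$ and the fiber integral can be pulled out of $dd^c$ to yield $-dd^c\log A(\xs)$.
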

	
	\begin{proof}
		By Lemma \ref{l-3-3} and by the formula (\ref{f-3-6}), we have
		\begin{align*}
			\left[ Td_{\iota}(\overline{T}{\xs}) ch_{\iota}(\overline{\Omega}^1_{\xs}) \right]^{(3,3)} =&2\left[ Td(\overline{T}{\xss} ) \right]^{(3,3)} -\frac{1}{48} f^* c_1(f_*K_{\xs}, h_{L^2})|_{\mathscr{X}^{\iota}}  \wedge \Omega\\
			&-\frac{1}{48} dd^c (\log \varphi)\Omega -\frac{1}{48} dd^c \left( \log \vol (\xs, \omega_{\xs}) \right) \Omega.
		\end{align*}
		By the projection formula, we have
		\begin{align*}
			\left[ f_* \left( Td_{\iota}(\overline{T}{\xs}) ch_{\iota}(\overline{\Omega}^1_{\xs}) \right) \right]^{(1,1)}
			=2\left[ f_* Td(\overline{T}{\xss}) \right]^{(1,1)} -\frac{1}{48} \int_{\xss} dd^c (\log \varphi)\Omega \\
			-\frac{1}{48} \left( \int_{\xss} \Omega \right) c_1(f_*K_{\xs}, h_{L^2}) -\frac{1}{48} \left( \int_{\xss} \Omega \right) dd^c \log \vol \left( \xs, \omega_{\xs} \right) .
		\end{align*}
		By Lemma \ref{l-3-1-A}, the proof is completed.
	\end{proof}
	
	Recall that each fiber $X$ of $f : \mathscr{X} \to S$ is a manifold of $K3^{[2]}$-type.
	By \cite[Main Theorem]{MR1879810}, we have
	\begin{align}\label{al-3-5-A}
		h^{1,q}(X) = \left\{
		 \begin{aligned} 
			&0 \quad &(q=0,2,4) \\
			&21 \quad & (q=1,3)
		 \end{aligned}
		 \right.
	\end{align}
	By \cite[Proposition 24.1]{MR1963559}, the homomorphism
	$$
		H^1(X, \Omega^1_X) \otimes H^2(X, \mathcal{O}_X) \to H^3(X, \Omega^1_X), \quad \alpha \otimes \theta \mapsto \alpha \wedge \theta
	$$
	is an isomorphism.
	Therefore we have an isomorphism of holomorphic vector bundles
	\begin{align}\label{al-3-1-A}
		R^1f_*\Omega^1_{\xs} \otimes R^2f_*\mathcal{O}_{\mathscr{X}} \cong R^3f_*\Omega^1_{\xs} .
	\end{align}
	
	For $q=1,3$, let $E(\pm 1, R^qf_*\Omega^1_{\xs} )$ be the $(\pm 1)$-eigenbundle of the $\mu_2$-action of $R^qf_*\Omega^1_{\xs} $.
	By the definition of $t$, we have
	$$
		\operatorname{rank} E(\pm 1, R^1f_*\Omega^1_{\xs} ) =\operatorname{rank} E(\mp 1, R^3f_*\Omega^1_{\xs} ) =\frac{21 \pm t}{2}.
	$$
	Since $\iota$ is fiberwise antisymplectic, we deduce from (\ref{al-3-1-A}) the following isomorphism
	$$
		E(\pm 1, R^1f_*\Omega^1_{\xs} ) \otimes R^2f_*\mathcal{O}_{\mathscr{X}} \to E(\mp 1, R^3f_*\Omega^1_{\xs} ).
	$$
	We denote by $h_{\pm}$ the hermitian metrics on $E(\pm 1, R^qf_*\Omega^1_{\xs} )$ induced from the $L^2$-metric $h_{L^2}$ on $R^qf_*\Omega^1_{\xs}$ 
	and set $R^qf_*\overline{\Omega}^p_{\xs} = ( R^qf_*\Omega^p_{\xs}, h_{L^2} )$ and $\bar{E}(\pm 1, R^qf_*\Omega^1_{\xs} ) =(E(\pm 1, R^qf_*\Omega^1_{\xs} ), h_{\pm})$.
	
	\begin{lem}\label{l-3-4}
		The following identity holds:
		\begin{align*}
			&\quad c_1(\bar{E}(\pm 1, R^1f_*\Omega^1_{\xs} )) -c_1(\bar{E}(\mp 1, R^3f_*\Omega^1_{\xs} )) \\
			&=-\frac{21 \pm t}{4} c_1(R^4f_*\bar{\mathcal{O}}_{\mathscr{X}}) -\frac{21 \pm t}{4} dd^c \log \vol (\xs, \omega_{\xs}) .
		\end{align*}
	\end{lem}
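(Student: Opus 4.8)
The plan is to extract the identity from the cup-product isomorphism
$$
	E(\pm 1, R^1f_*\Omega^1_{\xs}) \otimes R^2f_*\mathcal{O}_{\mathscr{X}} \xrightarrow{\ \sim\ } E(\mp 1, R^3f_*\Omega^1_{\xs})
$$
displayed just above. Passing to the corresponding determinant line bundles with their $L^2$-metrics and using $c_1(V \otimes L) = c_1(V) + (\operatorname{rank} V)\,c_1(L)$ for a line bundle $L$, together with $\operatorname{rank} E(\pm 1, R^1f_*\Omega^1_{\xs}) = \tfrac{21 \pm t}{2}$, one obtains
$$
	c_1(\bar{E}(\mp 1, R^3f_*\Omega^1_{\xs})) = c_1(\bar{E}(\pm 1, R^1f_*\Omega^1_{\xs})) + \tfrac{21 \pm t}{2}\, c_1(R^2f_*\bar{\mathcal{O}}_{\mathscr{X}}) - dd^c \log J_{\pm},
$$
where $J_{\pm}$ is the positive smooth function on $S$ comparing the $L^2$-metric on the left-hand determinant with the one induced from the right-hand side through the isomorphism. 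In the same spirit, the square map $R^2f_*\mathcal{O}_{\mathscr{X}} \otimes R^2f_*\mathcal{O}_{\mathscr{X}} \to R^4f_*\mathcal{O}_{\mathscr{X}}$ is an isomorphism of line bundles, because $\bar{\eta} \wedge \bar{\eta} = \bar{\eta}^2 \neq 0$ on a holomorphic symplectic fourfold; this yields $c_1(R^4f_*\bar{\mathcal{O}}_{\mathscr{X}}) = 2\,c_1(R^2f_*\bar{\mathcal{O}}_{\mathscr{X}}) - dd^c \log J'$ for a second discrepancy function $J'$. Substituting the latter relation into the former eliminates $c_1(R^2f_*\bar{\mathcal{O}}_{\mathscr{X}})$ in favour of $c_1(R^4f_*\bar{\mathcal{O}}_{\mathscr{X}})$, so everything reduces to identifying $J_{\pm}$ and $J'$.

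To compute these discrepancy functions I would argue fibrewise and replace $h_{\xs}$ by the fibrewise Ricci-flat \K metric lying in the same fibrewise \K class; this leaves $\vol(\xs, \omega_{\xs})$ unchanged, since by the Fujiki relation it depends only on the fibrewise class. For the Ricci-flat metric the holomorphic symplectic form $\sigma_I$ and its conjugate are parallel, so the harmonic representative of $\theta$ is $\tfrac{\mu}{2}\bar{\sigma}_I$ and wedging with it carries harmonic $(1,1)$-forms to harmonic $(1,3)$-forms. Lemma \ref{l-3-1} then gives $h_{X,0}(\theta \wedge \alpha, \theta \wedge \beta) = |\mu|^2 h_{X,0}(\alpha, \beta)$ on harmonic $(1,1)$-forms, while the same computation as in its proof gives $h_{X,0}(\theta, \theta) = 2|\mu|^2$; combined with $\|\theta^2\|_{L^2}^2 = \int_X \theta^2 \wedge \bar{\theta}^2 = 4|\mu|^4 \vol(X, \omega_X)$, which is \eqref{f-3-2}, this produces
$$
	J_{\pm} = 2^{-\frac{21 \pm t}{2}}\, \vol(\xs, \omega_{\xs})^{-\frac{21 \pm t}{2}}, \qquad J' = \vol(\xs, \omega_{\xs})^{-1}
$$
for the Ricci-flat metric. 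Applying $dd^c \log$ (the constants drop out) and combining the two relations gives precisely $-\tfrac{21 \pm t}{4}\, c_1(R^4f_*\bar{\mathcal{O}}_{\mathscr{X}}) - \tfrac{21 \pm t}{4}\, dd^c \log \vol(\xs, \omega_{\xs})$, as desired.

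The main obstacle is justifying the reduction to the Ricci-flat metric. A priori the $L^2$-metrics on $R^1f_*\Omega^1_{\xs}$ and $R^3f_*\Omega^1_{\xs}$, and hence the left-hand side of the identity, change when the fibrewise \K metric is moved within its class, while the right-hand side does not; what must be shown is that the combination $c_1(\bar{E}(\pm 1, R^1f_*\Omega^1_{\xs})) - c_1(\bar{E}(\mp 1, R^3f_*\Omega^1_{\xs}))$ — equivalently the function $J_{\pm}$ — is independent of the choice of fibrewise \K metric in a fixed class. I expect to derive this from Serre duality: the $L^2$-metric on $H^3(X, \Omega^1_X)$ is controlled by the topological Serre pairing with $H^1(X, \Omega^3_X)$ and by the Lefschetz isomorphism $H^1(X, \Omega^1_X) \xrightarrow{\,\wedge \eta\,} H^1(X, \Omega^3_X)$, so the factor by which $\alpha \mapsto \alpha \wedge \theta$ rescales $L^2$-metrics is governed by cohomological data alone. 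Together with the metric-independence of $\|\eta^2\|_{L^2}^2$ and $\|\theta^2\|_{L^2}^2$, this will make the computation above valid for an arbitrary fibrewise \K metric.
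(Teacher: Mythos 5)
Your proposal is correct and follows essentially the paper's own proof: the paper likewise reduces to the fibrewise Ricci-flat metric, applies Lemma \ref{l-3-1} together with (\ref{f-3-2}) to the wedge-with-$\theta$ isomorphism, and obtains the identity, merely recording the computation as the single norm identity (\ref{f-3-7}) rather than routing through $c_1(R^2f_*\bar{\mathcal{O}}_{\mathscr{X}})$ and the squaring map as you do. The reduction you flag as the main obstacle is exactly the fact the paper invokes without proof, namely that the $L^2$-metrics on $H^q(X,\Omega^1_X)$ $(q=1,3)$ and $H^2(X,\mathcal{O}_X)$ depend only on the K\"ahler class; the standard justification is the Lefschetz decomposition together with the Hodge--Riemann bilinear relations (which express the $L^2$-pairing through cup products with powers of $[\omega_X]$), rather than the Serre-duality argument you sketch.
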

	
	\begin{proof}
		We only prove the identity
		\begin{align*}
			& \quad c_1(\bar{E}(+ 1, R^1f_*\Omega^1_{\xs} )) -c_1(\bar{E}(- 1, R^3f_*\Omega^1_{\xs} )) \\
			&=-\frac{21 + t}{4} c_1(R^4f_*\bar{\mathcal{O}}_{\mathscr{X}}) -\frac{21 + t}{4} dd^c \log \vol (\xs, \omega_{\xs}).
		\end{align*}
		The other identity can be shown in the same manner.
		Set $N= \frac{21+t}{2}$.
		Let $s_1, \dots , s_N$ be a local holomorphic frame of $E(+ 1, R^1f_*\Omega^1_{\xs} )$.
		Then $s_1 \wedge \dots \wedge s_N$ is a nowhere vanishing holomorphic section of $\det E(+1, R^1f_*\Omega^1_{\xs} )$.
		Let $\theta$ be a nowhere vanishing holomorphic section of $R^2f_*\mathcal{O}_{\mathscr{X}}$.
		Then $s_1 \wedge \theta, \dots , s_N \wedge \theta$ are local holomorphic frame of $E( -1, R^3f_*\Omega^1_{\xs} )$
		and $(s_1 \wedge \theta) \wedge \dots \wedge (s_N \wedge \theta)$ is a nowhere vanishing holomorphic section of $\det E( -1, R^3f_*\Omega^1_{\xs} )$.
		
		It suffices to show that
		\begin{align}\label{f-3-7}
		\begin{aligned}
			&-\log || (s_1 \wedge \theta) \wedge \dots \wedge (s_N \wedge \theta) ||_{L^2}^2 \\
			&=-\frac{N}{2} \log || \theta^2 ||_{L^2}^2 -\log || s_1 \wedge \dots \wedge s_N ||_{L^2}^2 +\log \left( 2^N \vol (\xs, \omega_{\xs})^{\frac{N}{2}} \right).
		\end{aligned}
		\end{align}
		We may assume that $S$ consists of one point,
		and we set $X=\mathscr{X}$.\par
		Since the $L^2$-metrics on $H^q(X, \Omega_X^1)$ $(q=1,3)$ and $H^2(X, \mathcal{O}_X)$ depend only on the choice of the \K class $[\omega_X]$ and are independent of the \K form $\omega_X$ itself,
		we may assume that $\omega_X =\omega_{X,0}$ is Ricci-flat.\par
		By Lemma \ref{l-3-1}, we have 
		$$
			h_{X,0}(\theta \wedge \alpha, \theta \wedge \alpha) = |\mu|^2 h _{X,0}(\alpha, \alpha)
		$$
		for each $\alpha \in A^{1,1}(X)$.
		By integrating both sides, we have
		$$
			||\theta \wedge \alpha||_{L^2}^2 =|\mu|^2 ||\alpha||_{L^2}^2.
		$$
		Therefore we have an isometry
		$$
			(\det H^1(X, \Omega_X^1)_+ , |\mu|^{2N} \det h_{+}) \cong (\det H^3(X, \Omega_X^1)_-, \det h_{-}).
		$$
		By the formula (\ref{f-3-2}), we obtain the formula (\ref{f-3-7}).
	\end{proof}
	
	\begin{prop}\label{p-3-2}
		The following identity holds:
		$$
			\sum_{q \geqq 0} (-1)^q [ch_{\iota} (R^qf_*\bar{\Omega}^1_{\xs})]^{(1,1)} = -\frac{t}{2} c_1(f_*K_{\xs}, h_{L^2}) +\frac{t}{2} dd^c \log \vol (\xs, \omega_{\xs}).
		$$
	\end{prop}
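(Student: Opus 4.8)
The plan is to reduce Proposition \ref{p-3-2} to Lemma \ref{l-3-4}, using the Hodge numbers (\ref{al-3-5-A}), the definition (\ref{al-1-B}) of the equivariant Chern character form, and relative Serre duality.

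First I would cut the alternating sum down to two terms. By (\ref{al-3-5-A}) we have $h^{1,q}(X)=0$ for $q=0,2,4$, so $R^0f_*\Omega^1_{\xs}=R^2f_*\Omega^1_{\xs}=R^4f_*\Omega^1_{\xs}=0$ and only $q=1,3$ contribute, each with sign $(-1)^q=-1$. By (\ref{al-1-B}), for $q=1,3$ we have $[ch_{\iota}(R^qf_*\bar{\Omega}^1_{\xs})]^{(1,1)}=c_1(\bar{E}(+1,R^qf_*\Omega^1_{\xs}))-c_1(\bar{E}(-1,R^qf_*\Omega^1_{\xs}))$. Expanding the sum and regrouping the four Chern forms — pairing the $(+1)$-eigenbundle of $R^1f_*\Omega^1_{\xs}$ with the $(-1)$-eigenbundle of $R^3f_*\Omega^1_{\xs}$, and the $(-1)$-eigenbundle of $R^1f_*\Omega^1_{\xs}$ with the $(+1)$-eigenbundle of $R^3f_*\Omega^1_{\xs}$, as suggested by the isomorphism $E(\pm 1,R^1f_*\Omega^1_{\xs})\otimes R^2f_*\mathcal{O}_{\mathscr{X}}\cong E(\mp 1,R^3f_*\Omega^1_{\xs})$ — I would rewrite the left-hand side of the proposition as
$$
-\bigl[\,c_1(\bar{E}(+1,R^1f_*\Omega^1_{\xs}))-c_1(\bar{E}(-1,R^3f_*\Omega^1_{\xs}))\,\bigr]+\bigl[\,c_1(\bar{E}(-1,R^1f_*\Omega^1_{\xs}))-c_1(\bar{E}(+1,R^3f_*\Omega^1_{\xs}))\,\bigr].
$$
Each bracket is exactly the left-hand side of Lemma \ref{l-3-4}, for the $+$ sign and for the $-$ sign respectively.

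Applying Lemma \ref{l-3-4} to each bracket, the first equals $-\tfrac{21+t}{4}\bigl(c_1(R^4f_*\bar{\mathcal{O}}_{\mathscr{X}})+dd^c\log\vol(\xs,\omega_{\xs})\bigr)$ and the second equals $-\tfrac{21-t}{4}\bigl(c_1(R^4f_*\bar{\mathcal{O}}_{\mathscr{X}})+dd^c\log\vol(\xs,\omega_{\xs})\bigr)$. Feeding these into the displayed expression and using $\tfrac{21+t}{4}-\tfrac{21-t}{4}=\tfrac{t}{2}$, the left-hand side collapses to $\tfrac{t}{2}c_1(R^4f_*\bar{\mathcal{O}}_{\mathscr{X}})+\tfrac{t}{2}dd^c\log\vol(\xs,\omega_{\xs})$.

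It then remains to replace $c_1(R^4f_*\bar{\mathcal{O}}_{\mathscr{X}})$ by $-c_1(f_*K_{\xs},h_{L^2})$, and this is the step that needs care. Relative Serre duality provides a canonical isomorphism $R^4f_*\mathcal{O}_{\mathscr{X}}\cong (f_*K_{\xs})^{\vee}$, and the point is that it is compatible with the $L^2$-metrics with no residual volume factor, i.e. $c_1(R^4f_*\bar{\mathcal{O}}_{\mathscr{X}})+c_1(f_*K_{\xs},h_{L^2})=0$. I would verify this directly: choosing local holomorphic frames $\eta$ of $f_*\Omega^2_{\xs}$ and $\theta$ of $R^2f_*\mathcal{O}_{\mathscr{X}}$, so that $\eta^2$ and $\theta^2$ are holomorphic frames of $f_*K_{\xs}$ and of $R^4f_*\mathcal{O}_{\mathscr{X}}$, and writing $\eta=\tfrac{\lambda}{2}\sigma_I$, $\theta=\tfrac{\mu}{2}\bar\sigma_I$ as in \S\ref{ss-3-1}, a computation with the harmonic representatives $\tfrac{\lambda^2}{4}\sigma_I^2$ and $\tfrac{\mu^2}{4}\bar\sigma_I^2$ together with (\ref{f-3-1}) and (\ref{f-3-2}) shows $\|\eta^2\|_{L^2}^2\,\|\theta^2\|_{L^2}^2=\bigl|\int_X\eta^2\wedge\theta^2\bigr|^2$, and the right-hand side is the modulus squared of the nowhere-vanishing holomorphic function on the base coming from the Serre pairing $f_*K_{\xs}\otimes R^4f_*\mathcal{O}_{\mathscr{X}}\to R^4f_*K_{\xs}\cong\mathcal{O}_S$. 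Hence $\log(\|\eta^2\|_{L^2}^2\|\theta^2\|_{L^2}^2)$ is pluriharmonic, so $c_1(R^4f_*\bar{\mathcal{O}}_{\mathscr{X}})=-c_1(f_*K_{\xs},h_{L^2})$, which completes the proof. The main obstacle is precisely this metric comparison — ensuring that no $dd^c\log\vol(\xs,\omega_{\xs})$ term creeps back in; the rest is bookkeeping on top of Lemma \ref{l-3-4}.
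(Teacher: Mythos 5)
Your proposal is correct and follows essentially the same route as the paper: reduce to $q=1,3$ via the Hodge numbers (\ref{al-3-5-A}), expand $[ch_{\iota}]^{(1,1)}$ into the $(\pm 1)$-eigenbundle Chern forms, regroup so that Lemma \ref{l-3-4} applies to each pair, and finish with $c_1(R^4f_*\bar{\mathcal{O}}_{\mathscr{X}})=-c_1(f_*K_{\xs},h_{L^2})$ from Serre duality. The only difference is that you verify the metric compatibility of Serre duality explicitly (via $\|\eta^2\|_{L^2}^2\|\theta^2\|_{L^2}^2=\bigl|\int_X\eta^2\wedge\theta^2\bigr|^2$), whereas the paper simply invokes it; your check is sound.
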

	
	\begin{proof} 
		By Lemma \ref{l-3-4} and by (\ref{al-3-5-A}), we have
		\begin{align*}
			\sum_{q \geqq 0} (-1)^q [ch_{\iota} (R^qf_*\bar{\Omega}^1_{\xs})]^{(1,1)} =& -[ch_{\iota} (R^1f_*\bar{\Omega}^1_{\xs})]^{(1,1)} -[ch_{\iota} (R^3f_*\bar{\Omega}^1_{\xs})]^{(1,1)} \\
			=& \Bigl. -c_1(\bar{E}(+ 1, R^1f_*\Omega^1_{\xs} )) +c_1(\bar{E}(- 1, R^1f_*\Omega^1_{\xs} )) \Bigr. \\
			& \Bigl. -c_1(\bar{E}(+ 1, R^3f_*\Omega^1_{\xs} )) +c_1(\bar{E}(- 1, R^3f_*\Omega^1_{\xs} )) \Bigr. \\
			=& \frac{t}{2} c_1(R^4f_*\bar{\mathcal{O}}_{\mathscr{X}}) +\frac{t}{2} dd^c \log \vol (\xs, \omega_{\xs}).
		\end{align*}
		By the Serre duality, we have
		\begin{align*}
			c_1(R^4f_*\bar{\mathcal{O}}_{\mathscr{X}}) =-c_1(f_*K_{\xs}, h_{L^2}),
		\end{align*}
		and we obtain the desired formula.
	\end{proof}
	
	We define the characteristic form $\omega_{H^{\cdot}(\xss)} \in A^{1,1}(S)$ by
	$$
		\omega_{H^{\cdot}(\xss)} = c_1(f_*\Omega^1_{\xss}, h_{L^2}) -c_1(R^1f_*\mathcal{O}_{\mathscr{X}^{\iota}}, h_{L^2}) -2c_1(f_*K_{\xss}, h_{L^2}).
	$$
	We define a function $\vol_{L^2}(R^1 f_* \mathbb{Z}, \omega_{\xss})$ on $S$ by
	$$
		\vol_{L^2}(R^1 f_* \mathbb{Z}, \omega_{\xss})(s) = \vol_{L^2}(H^1(X_s^{\iota}, \mathbb{Z}), \omega_s|_{X_s^{\iota}}).
	$$
	
	\begin{prop}\label{p-3-3}
		The following identity holds:
		\begin{align*}
			\sum_{q \geqq 0} (-1)^q \left[ ch(R^q f_* \bar{\mathcal{O}}_{\XX}) \right]^{(1,1)}= & -\frac{1}{2} dd^c \log \left\{ \vol(\xss, \omega_{\xss})^{2} \vol_{L^2}(R^1 f_* \mathbb{Z}, \omega_{\xss})^{-1}  \right\} \\
			&+\frac{1}{2} \omega_{H^{\cdot}(\xss)}.
		\end{align*}
	\end{prop}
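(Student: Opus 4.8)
The plan is to evaluate $c_1(R^q f_* \bar{\mathcal{O}}_{\XX}, h_{L^2})$ for $q=0,1,2$ separately and then combine. Since $f : \XX \to S$ is a proper smooth family of compact complex surfaces (Kähler, as each $X_s^{\iota}$ is a complex submanifold of the Kähler manifold $X_s$), $R^q f_* \mathcal{O}_{\XX}=0$ for $q\geq 3$, and since Chern forms of hermitian holomorphic bundles are of pure bidegree $(k,k)$ we have $[ch(R^q f_* \bar{\mathcal{O}}_{\XX})]^{(1,1)}=c_1(R^q f_* \bar{\mathcal{O}}_{\XX},h_{L^2})$; thus the left-hand side equals $c_1(R^0 f_* \bar{\mathcal{O}}_{\XX})-c_1(R^1 f_* \bar{\mathcal{O}}_{\XX})+c_1(R^2 f_* \bar{\mathcal{O}}_{\XX})$.

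For $q=0$, over a connected open $U\subseteq S$ decompose $f^{-1}(U)=\bigsqcup_j W_j$ into connected components; the idempotents $1_{W_j}$ form a holomorphic frame of $R^0 f_* \mathcal{O}_{\XX}|_U$, and distinct components being $L^2$-orthogonal gives $\|1_{W_1}\wedge\cdots\wedge 1_{W_r}\|_{L^2}^2(s)=\prod_j \int_{W_j\cap X_s^{\iota}} \omega_s^2/2!=\vol(X_s^{\iota},\omega_s)$, whence $c_1(R^0 f_* \bar{\mathcal{O}}_{\XX})=-dd^c\log\vol(\xss,\omega_{\xss})$. For $q=2$, relative Serre duality identifies $R^2 f_* \mathcal{O}_{\XX}$ with $(f_* K_{\xss})^{\vee}$ compatibly with the $L^2$-metrics, exactly as in the proof of Proposition \ref{p-3-2}, so that $c_1(R^2 f_* \bar{\mathcal{O}}_{\XX})=-c_1(f_* K_{\xss},h_{L^2})$.

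The case $q=1$ is the heart of the matter. Let $\mathcal{H}=R^1 f_*\mathbb{C}\otimes_{\mathbb{C}}\mathcal{O}_S$, equipped with the $L^2$ (Hodge) metric, and consider the Hodge filtration short exact sequence of holomorphic vector bundles
$$
0 \longrightarrow f_* \Omega^1_{\xss} \longrightarrow \mathcal{H} \longrightarrow R^1 f_* \mathcal{O}_{\XX} \longrightarrow 0,
$$
coming from relative de Rham cohomology and the fiberwise Hodge decomposition $H^1(X_s^{\iota},\mathbb{C})=H^{1,0}\oplus H^{0,1}$. Since the quotient metric on $\mathcal{H}/f_*\Omega^1_{\xss}$ agrees, via the isometry with the space of harmonic $(0,1)$-forms, with the intrinsic $L^2$-metric on $R^1 f_* \mathcal{O}_{\XX}$, the induced isometry $\det\mathcal{H}\cong\det(f_*\Omega^1_{\xss})\otimes\det(R^1 f_*\mathcal{O}_{\XX})$ gives $c_1(\mathcal{H},h_{L^2})=c_1(f_*\Omega^1_{\xss},h_{L^2})+c_1(R^1 f_* \bar{\mathcal{O}}_{\XX})$. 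On the other hand, a local $\mathbb{Z}$-basis $e_1,\dots,e_{b_1}$ of the local system $R^1 f_*\mathbb{Z}$ is a flat, hence holomorphic, frame of $\mathcal{H}$, so $c_1(\mathcal{H},h_{L^2})=-dd^c\log\det(\langle e_i,e_j\rangle_{L^2})=-dd^c\log\vol_{L^2}(R^1 f_*\mathbb{Z},\omega_{\xss})$. Combining the two gives $c_1(R^1 f_* \bar{\mathcal{O}}_{\XX})=-c_1(f_*\Omega^1_{\xss},h_{L^2})-dd^c\log\vol_{L^2}(R^1 f_*\mathbb{Z},\omega_{\xss})$. I expect the reconciliation of the two natural structures on $\mathcal{H}$ — the $L^2$-metric versus the flat integral structure — together with the book-keeping of quotient versus intrinsic $L^2$-metrics, to be the most delicate point.

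Finally, I would substitute the three formulas for $c_1(R^q f_* \bar{\mathcal{O}}_{\XX})$ into $c_1(R^0 f_* \bar{\mathcal{O}}_{\XX})-c_1(R^1 f_* \bar{\mathcal{O}}_{\XX})+c_1(R^2 f_* \bar{\mathcal{O}}_{\XX})$ and rewrite the result using the definition $\omega_{H^{\cdot}(\xss)}=c_1(f_*\Omega^1_{\xss},h_{L^2})-c_1(R^1 f_*\mathcal{O}_{\XX},h_{L^2})-2c_1(f_* K_{\xss},h_{L^2})$; re-expressing $c_1(R^1 f_*\mathcal{O}_{\XX})$ once more via the $q=1$ identity, a short linear manipulation of $(1,1)$-forms and $dd^c\log$ terms produces the asserted identity $-\tfrac12 dd^c\log\{\vol(\xss,\omega_{\xss})^2\vol_{L^2}(R^1 f_*\mathbb{Z},\omega_{\xss})^{-1}\}+\tfrac12\omega_{H^{\cdot}(\xss)}$.
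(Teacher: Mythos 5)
Your proposal is correct and follows essentially the same route as the paper: the three identities $c_1(f_*\bar{\mathcal{O}}_{\mathscr{X}^{\iota}})=-dd^c\log\vol(\xss,\omega_{\xss})$, $c_1(R^2f_*\bar{\mathcal{O}}_{\mathscr{X}^{\iota}})=-c_1(f_*K_{\xss},h_{L^2})$ (Serre duality), and $c_1(f_*\bar{\Omega}^1_{\xss})+c_1(R^1f_*\bar{\mathcal{O}}_{\mathscr{X}^{\iota}})=-dd^c\log\vol_{L^2}(R^1f_*\mathbb{Z},\omega_{\xss})$, followed by the same linear rearrangement into $\omega_{H^{\cdot}(\xss)}$. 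You simply spell out, via the Hodge-filtration exact sequence and the flat integral frame, the comparison that the paper states in one line, and your identification of the quotient metric with the intrinsic $L^2$-metric on $R^1f_*\mathcal{O}_{\mathscr{X}^{\iota}}$ is exactly the point that makes that line valid.
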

	
	\begin{proof}
		We have
		\begin{align}\label{f3-3-1}
			c_1(f_* \bar{\mathcal{O}}_{\mathscr{X}^{\iota}}) =-dd^c \log \vol (\xss, \omega_{\xss}), \quad \text{ and } \quad c_1(R^2 f_* \bar{\mathcal{O}}_{\mathscr{X}^{\iota}}) =-c_1(f_* K_{\xss}, h_{L^2}).
		\end{align}
		Moreover, since
		$$
			c_1(f_* \bar{\Omega}^1_{\xss}) + c_1(R^1 f_* \bar{\mathcal{O}}_{\mathscr{X}^{\iota}}) = c_1(R^1f_*\mathbb{C} \otimes \mathcal{O}_S, h_{L^2})=-dd^c \log \vol_{L^2}(R^1 f_* \mathbb{Z}, \omega_{\xss}),
		$$
		we have
		\begin{align}\label{f3-3-2}
			c_1(R^1 f_* \bar{\mathcal{O}}_{\mathscr{X}^{\iota}}) =\frac{1}{2} c_1(R^1 f_* \bar{\mathcal{O}}_{\mathscr{X}^{\iota}})  -\frac{1}{2} c_1(f_* \bar{\Omega}^1_{\xss})  -\frac{1}{2} dd^c \log \vol_{L^2}(R^1 f_* \mathbb{Z}, \omega_{\xss}).
		\end{align}
		By the formulas (\ref{f3-3-1}) and (\ref{f3-3-2}), the proof is completed. 
	\end{proof}
	
	We show that a family of $K3^{[2]}$-type manifolds with involution of type $(M, \mathcal{K})$ is locally projective.
	For the proof, we follow \cite[Lemma 2.7 and Theorem 5.6]{MR2047658}.
	
	\begin{lem}\label{l2-3-1-1}
		Let $f : \X \to S$ be a family of $K3^{[2]}$-type manifolds and let $\sigma \in H^0( S, R^2f_*\mathbb{Z} )$.
		If $\sigma(s) = \sigma|_{X_s} \in H^{1,1}( X_s, \mathbb{R} )$ for each $s \in S$,
		then we have $\sigma \in \operatorname{Im} \left( H^0( S, R^1f_*\mathcal{O}^*_{\X} ) \to H^0( S, R^2f_*\mathbb{Z} ) \right)$.
	\end{lem}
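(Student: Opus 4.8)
The plan is to run the standard exponential-sequence argument over the base $S$, using the vanishing of the low Hodge numbers of $K3^{[2]}$-type manifolds to remove the obstructions.

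First I would apply $Rf_*$ to the exponential short exact sequence $0 \to \mathbb{Z} \to \mathcal{O}_\X \to \mathcal{O}^*_\X \to 0$ on $\X$, obtaining the exact sequence of sheaves on $S$
$$
R^1f_*\mathcal{O}_\X \longrightarrow R^1f_*\mathcal{O}^*_\X \xrightarrow{c} R^2f_*\mathbb{Z} \xrightarrow{\beta} R^2f_*\mathcal{O}_\X,
$$
where $\beta$ is induced by the inclusion $\mathbb{Z}\hookrightarrow\mathcal{O}_\X$. Since every fiber $X_s$ is of $K3^{[2]}$-type, $H^1(X_s,\mathcal{O}_{X_s}) = H^{0,1}(X_s) = 0$ because $h^{1,0}(X_s)=0$ by (\ref{al-3-5-A}); as $S$ is reduced, Grauert's theorem gives $R^1f_*\mathcal{O}_\X=0$, hence $c$ is injective. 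Applying the left-exact functor $H^0(S,-)$ then shows that the image of $H^0(S,R^1f_*\mathcal{O}^*_\X)\to H^0(S,R^2f_*\mathbb{Z})$ is exactly the kernel of $H^0(\beta)\colon H^0(S,R^2f_*\mathbb{Z})\to H^0(S,R^2f_*\mathcal{O}_\X)$. So it suffices to prove $\beta(\sigma)=0$.

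Next I would reduce $\beta(\sigma)=0$ to a pointwise statement. Since $X_s$ is irreducible holomorphic symplectic, $h^{0,2}(X_s)=h^{2,0}(X_s)=1$ is constant in $s$, so again by Grauert's theorem and reducedness of $S$ the sheaf $R^2f_*\mathcal{O}_\X$ is locally free of rank one and the base-change map $R^2f_*\mathcal{O}_\X\otimes k(s)\to H^2(X_s,\mathcal{O}_{X_s})$ is an isomorphism for every $s$. Hence the section $\beta(\sigma)$ vanishes if and only if its value at every point $s\in S$ does, that is, if and only if the image of $\sigma(s)=\sigma|_{X_s}\in H^2(X_s,\mathbb{Z})$ under the map $H^2(X_s,\mathbb{Z})\to H^2(X_s,\mathcal{O}_{X_s})$ induced by $\mathbb{Z}\hookrightarrow\mathcal{O}_{X_s}$ is zero (here $R^2f_*\mathbb{Z}$ is a local system, so $\sigma|_{X_s}$ is unambiguous).

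Finally I would identify this fiberwise map via Hodge theory: under the Dolbeault isomorphism $H^2(X_s,\mathcal{O}_{X_s})\cong H^{0,2}(X_s)$ it is the composite of $H^2(X_s,\mathbb{Z})\to H^2(X_s,\mathbb{C})$ with the projection onto the $(0,2)$-component of the Hodge decomposition. By hypothesis $\sigma(s)\in H^{1,1}(X_s,\mathbb{R})$, so its $(0,2)$-component vanishes; therefore $\beta(\sigma)(s)=0$ for every $s$, whence $\beta(\sigma)=0$, and $\sigma$ lies in the image of $H^0(S,R^1f_*\mathcal{O}^*_\X)$. The only slightly delicate ingredient is the Grauert/base-change step used to deduce the global vanishing of $\beta(\sigma)$ from its pointwise vanishing; the remainder of the argument is formal.
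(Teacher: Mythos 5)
Your proposal is correct and takes essentially the same route as the paper: push the exponential sequence forward along $f$ and deduce from $\sigma(s)\in H^{1,1}(X_s,\mathbb{R})$ that the image of $\sigma$ in $H^0(S,R^2f_*\mathcal{O}_{\mathscr{X}})$ vanishes fiberwise, hence vanishes, so $\sigma$ lifts. The only differences are points of execution: the paper reduces to a polydisk and trivializes the line bundle $R^2f_*\mathcal{O}_{\mathscr{X}}$ there, whereas you work over arbitrary $S$ via Grauert/base change, and you additionally justify the exactness used at the level of global sections by noting $R^1f_*\mathcal{O}_{\mathscr{X}}=0$ (from $h^{0,1}(X_s)=0$), a step the paper asserts implicitly.
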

	
	\begin{proof}
		We may assume that $S$ is a polydisk.
		Since $h^2(X_s, \mathcal{O}_{X_s})=1$ for each $s \in S$, $R^2f_*\mathcal{O}_{\X}$ is an invertible sheaf and we may regard $R^2f_*\mathcal{O}_{\X}$ as a holomorphic line bundle on $S$.
		Since any holomorphic line bundle on the polydisk $S$ is trivial, there exists a nowhere-vanishing holomorphic section $\xi \in H^0(S, R^2f_*\mathcal{O}_{\X})$ such that
		$$
			R^2f_*\mathcal{O}_{\X} = \mathcal{O}_S \cdot \xi \quad \text{ and } \quad H^2(X_s, \mathcal{O}_{X_s} ) = \mathbb{C} \xi|_{X_s} \quad (s \in S).
		$$
		The exponential sequence on $\X$ 
		$$
			0 \to \mathbb{Z} \to \mathcal{O}_{\X} \to \mathcal{O}^*_{\X} \to 0 
		$$
		induces the following exact sequence:
		\begin{align}\label{al2-3-1-2}
			 H^0(S, R^1f_*\mathcal{O}^*_{\X} ) \xrightarrow{ \phi } H^0(S, R^2f_*\mathbb{Z}) \xrightarrow{ \psi } H^0(S, R^2f_*\mathcal{O}_{\X}).
		\end{align}
		Since $\psi( \sigma ) \in H^0(S, R^2f_*\mathcal{O}_{\X})$, there exists a holomorphic function $F$ on $S$ such that $\psi( \sigma ) = F \cdot \xi$.
		By our assumption of $\sigma|_{X_s} \in H^{1,1}( X_s, \mathbb{R} )$,
		$$
			0 = \psi( \sigma )|_{X_s} = F(s) \cdot \xi|_{X_s} \quad (s \in S).
		$$
		Therefore $F=0$ and $\sigma \in \operatorname{Ker} \psi$.
		By the exact sequence (\ref{al2-3-1-2}), we have $\sigma \in \operatorname{Im} \phi$.
	\end{proof}
	
	\begin{lem}\label{al2-3-1-3}
		Let $f : (\X, \iota) \to S$ be a family of $K3^{[2]}$-type manifolds with involution of type $(M, \mathcal{K})$.
		Then $f : \X \to S$ is locally projective.
	\end{lem}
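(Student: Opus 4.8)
The assertion is local on $S$, so fix $s_0\in S$ and, after shrinking, assume $S$ is a polydisk around $s_0$. The plan is to exhibit an $\iota$-invariant integral $(1,1)$-class on $\X$ that restricts to an ample class on $X_{s_0}$, to promote it to an honest line bundle on $\X$ by means of Lemma~\ref{l2-3-1-1}, and then to invoke openness of ampleness in the proper family $f$.

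First I would construct the polarisation on the central fibre. Since $\iota_{s_0}$ is antisymplectic, averaging any \K class $\omega$ over $\langle\iota_{s_0}\rangle$ produces the $\iota_{s_0}$-invariant \K class $\tfrac12(\omega+\iota_{s_0}^*\omega)$; hence the invariant \K cone of $X_{s_0}$ is a non-empty open cone inside $\bigl(H^2(X_{s_0},\mathbb{Z})^{\iota_{s_0}}\bigr)_{\mathbb{R}}$. Pick a rational point of that cone and clear denominators to obtain $v\in H^2(X_{s_0},\mathbb{Z})^{\iota_{s_0}}$ which is a \K class. By Lemma~\ref{l-2-10} the class $v$ is of type $(1,1)$ and $X_{s_0}$ is projective, so $v\in\operatorname{NS}(X_{s_0})$; since a rational \K class on a projective irreducible holomorphic symplectic manifold lies in the ample cone (the ample cone being the rational part of the \K cone, cf.\ \cite{MR1963559}), the class $v$ is ample. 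As $S$ is a polydisk the local system $R^2f_*\mathbb{Z}$ is trivial, and I let $\sigma\in H^0(S,R^2f_*\mathbb{Z})$ be the constant section equal to $v$ in the fibre over $s_0$. Because $\iota$ is an automorphism over $S$, it acts on $R^2f_*\mathbb{Z}$ compatibly with the flat structure and fixes $v$; therefore $\sigma(s)\in H^2(X_s,\mathbb{Z})^{\iota_s}$ for every $s\in S$, and Lemma~\ref{l-2-10} gives $\sigma(s)\in H^{1,1}(X_s,\mathbb{R})$ for all $s$.

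Finally I would globalise and conclude. By Lemma~\ref{l2-3-1-1}, the section $\sigma$ lies in the image of $H^0(S,R^1f_*\mathcal{O}^*_{\X})\to H^0(S,R^2f_*\mathbb{Z})$. Since $S$ is a polydisk we have $f_*\mathcal{O}^*_{\X}=\mathcal{O}^*_S$ and $H^1(S,\mathcal{O}^*_S)=H^2(S,\mathcal{O}^*_S)=0$, so the low-degree terms of the Leray spectral sequence for $\mathcal{O}^*_{\X}$ show that the corresponding relative class lifts to a genuine line bundle $\mathcal{L}$ on $\X$ with $c_1(\mathcal{L}|_{X_s})=\sigma(s)$ for all $s$. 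Then $\mathcal{L}|_{X_{s_0}}$ is ample, and by openness of $f$-ampleness there is an open neighbourhood $U\ni s_0$ over which $\mathcal{L}$ is $f$-ample, so $f^{-1}(U)\to U$ is projective. Since $s_0$ was arbitrary, $f$ is locally projective. I expect the only step that genuinely uses the geometry at hand — as opposed to formal arguments — to be the implication ``rational \K class on $X_{s_0}$ $\Rightarrow$ ample'', which relies both on the projectivity coming from the antisymplectic involution (Lemma~\ref{l-2-10}) and on the description of the ample cone of an irreducible holomorphic symplectic manifold; everything else (the reduction to a polydisk, lifting the relative Picard section to a line bundle, and spreading out ampleness) is routine.
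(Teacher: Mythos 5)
Your proposal is correct and follows essentially the same route as the paper: restrict to a polydisk, produce an $\iota$-invariant integral ample class on a fibre, extend it as a flat section of $R^2f_*\mathbb{Z}$ (so that Lemma~\ref{l-2-10} keeps it of type $(1,1)$ on every fibre), lift it to a line bundle on $\X$ via Lemma~\ref{l2-3-1-1}, and conclude by openness of relative ampleness. The only difference is that where the paper quotes Joumaah's Proposition~4.4(iv) for a $\mu_2$-equivariant ample line bundle on the central fibre, you construct the invariant ample class directly by averaging a K\"ahler class, taking a nearby rational invariant class and applying the Lefschetz $(1,1)$/Kodaira argument, which is a harmless and somewhat more self-contained substitute.
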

	
	\begin{proof}
		Fix a point $s \in S$.
		By \cite[Proposition 4.4.(iv)]{MR3519981}, there exists a $\G$-equivariant ample line bundle $L_s$ on $X_s$.
		We may assume that $S$ is a polydisk.
		Choose an isomorphism $\alpha : R^2f_*\mathbb{Z} \to L_{2, S}$ such that for $s \in S$, $\alpha_s : H^2(X_s, \mathbb{Z}) \to L_2$ is an admissible marking for $(M, \mathcal{K})$.
		Then we have $\alpha_s (c_1(L_s)) \in M$.
		If $S$ is sufficiently small, it follows from Lemma \ref{l2-3-1-1} that there exists a holomorphic line bundle $\mathcal{L}$ on $\X$ such that $\mathcal{L}|_{X_s} =L_s$.
		Since ampleness is an open condition, there exists an open neighborhood $U \subset S$ of $s$ such that $\mathcal{L}|_{f^{-1}(U)}$ is relatively ample.  
	\end{proof}
	
	In particular, a family $f : (\X, \iota) \to S$ of $K3^{[2]}$-type manifolds with involution of type $(M, \mathcal{K})$ is locally \K.

	\begin{thm}\label{p-3-4}
		We define a real-valued function $\tau_{M, \mathcal{K}, \xs}$ on $S$ by 
		$$
			\tau_{M, \mathcal{K}, \xs}(s) =\tau_{M, \mathcal{K}}(X_s, \iota_s) \quad (s \in S).
		$$
		Then $\tau_{M, \mathcal{K}, \xs}$ is smooth and satisfies
		$$
			-dd^c \log \tau_{M, \mathcal{K}, \xs} = \frac{(t+1)(t+7)}{16} c_1(f_*K_{\xs}, h_{L^2}) +\omega_{H^{\cdot}(\xss)}.
		$$
	\end{thm}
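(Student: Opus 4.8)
The plan is to obtain the formula by applying the curvature formula for equivariant Quillen metrics (Theorem~\ref{p-1-3}) twice and then substituting Propositions~\ref{p-3-1}, \ref{p-3-2} and~\ref{p-3-3}, after which every metric-dependent quantity except $c_1(f_*K_{\xs},h_{L^2})$ and $\omega_{H^{\cdot}(\xss)}$ cancels.

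First I would settle smoothness. By Lemma~\ref{al2-3-1-3} the family $f:(\X,\iota)\to S$ is locally projective, hence locally K\"ahler, and so is $f:\XX\to S$, since $\XX$ is a closed complex submanifold of $\X$. The sheaves $R^qf_*\Omega^1_{\xs}$ are locally free by (\ref{al-3-5-A}), and $R^qf_*\mathcal{O}_{\XX}$ is locally free because each fixed surface $X_s^{\iota}$ is projective, so its Hodge numbers $h^{0,q}(X_s^{\iota})$ are deformation invariant. Thus Theorem~\ref{p-1-3} applies both to $\overline{\Omega}^1_{\xs}$ with $g=\iota$ and to $\overline{\mathcal{O}}_{\XX}$ with $g=1$, so $\tau_{\iota}(\overline{\Omega}^1_{\xs})$ and $\tau(\overline{\mathcal{O}}_{\XX})$ are smooth; the remaining factors $\vol(\xs,\omega_{\xs})$, $\vol(\xss,\omega_{\xss})$, $\vol_{L^2}(R^1f_*\mathbb{Z},\omega_{\xss})$ and $A(\xs)$ are smooth and positive, being, respectively, fiber integrals, a Gram determinant, and the exponential of a fiber integral of smooth forms. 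Hence $\tau_{M,\mathcal{K},\xs}$ is smooth and positive.

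For the identity, the first application of Theorem~\ref{p-1-3}, to $\overline{E}=\overline{\Omega}^1_{\xs}$ with $g=\iota$, gives
$$-dd^c\log\tau_{\iota}(\overline{\Omega}^1_{\xs})=\left[f_*\,Td_{\iota}(\overline{T}{\xs})\,ch_{\iota}(\overline{\Omega}^1_{\xs})\right]^{(1,1)}-\sum_{q\ge 0}(-1)^q\left[ch_{\iota}(R^qf_*\Omega^1_{\xs},h_{L^2})\right]^{(1,1)}.$$
Rewriting the first term on the right by Proposition~\ref{p-3-1} and the second by Proposition~\ref{p-3-2}, and using $\frac{t^2+7}{16}+\frac{t}{2}=\frac{(t+1)(t+7)}{16}$ and $\frac{t^2+7}{16}-\frac{t}{2}=\frac{(t-1)(t-7)}{16}$, the right side becomes $2\left[f_*\,Td(\overline{T}{\xss})\right]^{(1,1)}$ plus a multiple of $dd^c\log A(\xs)$ plus $\frac{(t+1)(t+7)}{16}c_1(f_*K_{\xs},h_{L^2})$ plus $\frac{(t-1)(t-7)}{16}dd^c\log\vol(\xs,\omega_{\xs})$. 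The second application, to $\overline{E}=\overline{\mathcal{O}}_{\XX}$ with $g=1$ (so $ch(\overline{\mathcal{O}}_{\XX})=1$), gives $\left[f_*\,Td(\overline{T}{\xss})\right]^{(1,1)}=-dd^c\log\tau(\overline{\mathcal{O}}_{\XX})+\sum_{q\ge 0}(-1)^q\left[ch(R^qf_*\mathcal{O}_{\XX},h_{L^2})\right]^{(1,1)}$, which by Proposition~\ref{p-3-3} equals $-dd^c\log\tau(\overline{\mathcal{O}}_{\XX})-dd^c\log\vol(\xss,\omega_{\xss})+\frac{1}{2}dd^c\log\vol_{L^2}(R^1f_*\mathbb{Z},\omega_{\xss})+\frac{1}{2}\omega_{H^{\cdot}(\xss)}$. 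Substituting the latter into the former, and then subtracting $dd^c\log$ of each of the remaining factors $\vol(\xs,\omega_{\xs})^{(t-1)(t-7)/16}$, $A(\xs)$, $\tau(\overline{\mathcal{O}}_{\XX})^{-2}$, $\vol(\xss,\omega_{\xss})^{-2}$ and $\vol_{L^2}(R^1f_*\mathbb{Z},\omega_{\xss})$ from Definition~\ref{d-3-1}, every term involving $A(\xs)$, $\vol(\xs,\omega_{\xs})$, $\vol(\xss,\omega_{\xss})$, $\vol_{L^2}(R^1f_*\mathbb{Z},\omega_{\xss})$ or $\tau(\overline{\mathcal{O}}_{\XX})$ drops out, and what remains is $\frac{(t+1)(t+7)}{16}c_1(f_*K_{\xs},h_{L^2})+\omega_{H^{\cdot}(\xss)}$.

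I expect the only real difficulty to be the bookkeeping: keeping track of the signs and of the rational coefficients so that this cancellation is exact. The exponents $(t-1)(t-7)/16$, $-2$, $-2$ and $+1$ and the factor $A(\xs)$ built into Definition~\ref{d-3-1} are precisely those forced by the metric-dependent output of Propositions~\ref{p-3-1}, \ref{p-3-2}, \ref{p-3-3} and the two instances of Theorem~\ref{p-1-3}; a slip in any of them would leave a residual $dd^c\log$ of a volume, of $\vol_{L^2}(R^1f_*\mathbb{Z},\omega_{\xss})$, of $A(\xs)$, or of a holomorphic analytic torsion. No geometric input beyond these propositions (and Lemma~\ref{l-3-1-A}, already used inside Proposition~\ref{p-3-1}) is needed.
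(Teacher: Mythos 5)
Your proposal is correct and follows essentially the same route as the paper's own proof: two applications of the curvature formula of Theorem~\ref{p-1-3} (to $\overline{\Omega}^1_{\xs}$ with $g=\iota$ and to $\overline{\mathcal{O}}_{\XX}$ with $g=1$), substitution of Propositions~\ref{p-3-1}, \ref{p-3-2} and~\ref{p-3-3}, and exact cancellation of all metric-dependent terms against the factors built into Definition~\ref{d-3-1}. Your explicit treatment of smoothness and of the local projectivity/K\"ahlerness needed to invoke Theorem~\ref{p-1-3} is a welcome addition that the paper leaves implicit (via Lemma~\ref{al2-3-1-3} stated just before the theorem).
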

	
	\begin{proof}
		By Proposition \ref{p-3-1}, we have
		\begin{align*}
			&[f_*(Td_{\iota}(T\xs, h_{\xs}) ch_{\iota}(\Omega^1_{\xs}, h^*_{\xs}))]^{(1,1)}  \\
			&= 2[f_*Td(T\xss, h_{\xss})]^{(1,1)} +dd^c \log A(\xs, h_{\xs})\\
				 &\quad  +\frac{t^2+7}{16} c_1(f_*K_{\xs}, h_{L^2}) +\frac{t^2+7}{16} dd^c \log Vol({\xs}, \omega_{\xs}) .
		\end{align*}
		Moreover, by Proposition \ref{p-3-2}, we have
		$$
			\sum_{q \geqq 0} (-1)^q [ch_{\iota} (R^qf_*\Omega^1_{\xs})]^{(1,1)} 
			= -\frac{t}{2} c_1(f_*K_{\xs}, h_{L^2})  +\frac{t}{2} dd^c \log Vol({\xs}, \omega_{\xs}) .
		$$
		Therefore, by the curvature formula for the equivariant Quillen metric (Theorem \ref{p-1-3}), we have
		\begin{align}\label{al3-3-5}
		\begin{aligned}
			dd^c\log \tau_{\iota}(\bar{\Omega}^1_{\xs}) =& 2\left[ f_* Td(\overline{T}_{\xss}) \right]^{(1,1)} +dd^c \log A(\xs, \omega_{\xs}) \\
			&+\frac{(t+7)(t+1)}{16} c_1(f_*K_{\xs}, h_{L^2}) +\frac{(t-7)(t-1)}{16}dd^c \log \vol \left( \xs, \omega_{\xs} \right).
		\end{aligned}
		\end{align}
		On the other hand, by Proposition \ref{p-3-3}, we have
		\begin{align*}
			\sum_{q \geqq 0} (-1)^q \left[ ch(R^q f_* \bar{\mathcal{O}}_{\XX}) \right]^{(1,1)}=& \frac{1}{2} dd^c \log \left\{ \vol(\xss, \omega_{\xss})^{-2} \vol_{L^2}(R^1 f_* \mathbb{Z}, \omega_{\xss})  \right\} \\
			& +\frac{1}{2} \omega_{H^{\cdot}(\xss)}.
		\end{align*}
		Therefore, by the curvature formula of the Quillen metric (Theorem \ref{p-1-3}), we have
		\begin{align}\label{al3-3-6}
		\begin{aligned}
			\left[ f_* Td(\overline{T}_{\xss}) \right]^{(1,1)} 
			=&\frac{1}{2}dd^c \log \left\{ \tau(\bar{\mathcal{O}}_{\mathscr{X}^{\iota}})^{-2} \vol(\xss, \omega_{\xss})^{-2} \vol_{L^2}(R^1 f_* \mathbb{Z}, \omega_{\xss}) \right\} \\
			&+\frac{1}{2} \omega_{H^{\cdot}(\xss)}.
		\end{aligned}
		\end{align}
		By (\ref{al3-3-5}) and (\ref{al3-3-6}) and by the definition of $\tau_{M, \mathcal{K}}$, we obtain the desired formula. 
	\end{proof}
	
	\begin{lem}\label{l4-3-1}
		The form $\omega_{H^{\cdot}(\xss)}$ is independent of the choice of an $\iota$-invariant fiberwise \K metric $\omega_{\xs}$.
	\end{lem}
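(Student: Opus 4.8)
The plan is to treat the three terms in $\omega_{H^{\cdot}(\xss)}$ separately and to track how each $L^2$-metric depends on $\omega_{\xs}$. The underlying holomorphic sheaves $f_*\Omega^1_{\xss}$, $R^1f_*\mathcal{O}_{\XX}$, $f_*K_{\xss}$ and the flat bundle $R^1f_*\mathbb{C}\otimes\mathcal{O}_S$ on $S$ are determined by the holomorphic family $f:\XX\to S$ alone; only the $L^2$-metrics they carry involve $\omega_{\xs}$, so the whole question is whether this metric dependence cancels in the Chern-form combination.

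First I would check that $c_1(f_*K_{\xss}, h_{L^2})$ is by itself independent of $\omega_{\xs}$. For a local holomorphic section $\eta$ of $f_*K_{\xss}$, i.e.\ a fibrewise holomorphic $2$-form on the family of surfaces $\XX\to S$, the pointwise identity $|\eta_s|^2_{h_s}\,\omega_s^2/2=\eta_s\wedge\overline{\eta_s}$ holds for top-degree forms on a surface (the surface analogue of the identity $\|\eta^2\|_{L^2}^2=\int_X\eta^2\wedge\bar\eta^2$ recalled in \S\ref{ss-3-1}). Hence $\|\eta\|_{L^2}^2(s)=\int_{X_s^{\iota}}\eta_s\wedge\overline{\eta_s}$ does not involve $h_s$, so $c_1(f_*K_{\xss}, h_{L^2})=-dd^c\log\|\eta\|_{L^2}^2$ is independent of $\omega_{\xs}$.

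The main step is to show that $c_1(f_*\Omega^1_{\xss}, h_{L^2})-c_1(R^1f_*\mathcal{O}_{\XX}, h_{L^2})$ is independent of $\omega_{\xs}$. From the Hodge filtration $0\to f_*\Omega^1_{\xss}\to R^1f_*\mathbb{C}\otimes\mathcal{O}_S\to R^1f_*\mathcal{O}_{\XX}\to 0$ and the $L^2$-orthogonality of the fibrewise Hodge decomposition one has, as in the proof of Proposition \ref{p-3-3}, the identity $c_1(f_*\Omega^1_{\xss}, h_{L^2})+c_1(R^1f_*\mathcal{O}_{\XX}, h_{L^2})=-dd^c\log\vol_{L^2}(R^1f_*\mathbb{Z}, \omega_{\xss})$, so the difference in question equals $2\,c_1(f_*\Omega^1_{\xss}, h_{L^2})+dd^c\log\vol_{L^2}(R^1f_*\mathbb{Z}, \omega_{\xss})$. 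Now pick a local holomorphic frame $e_1,\dots,e_r$ of $f_*\Omega^1_{\xss}$ and set $G=(\langle e_i,e_j\rangle_{L^2})$, so $c_1(f_*\Omega^1_{\xss}, h_{L^2})=-dd^c\log\det G$. On each fibre $\bar e_1,\dots,\bar e_r$ span the $(0,1)$-part of $H^1$, and by orthogonality of the Hodge decomposition the $L^2$-Gram matrix of $(e_1,\dots,e_r,\bar e_1,\dots,\bar e_r)$ is block diagonal with determinant $(\det G)^2$. Comparing this frame to a flat frame $f_1,\dots,f_{2r}$ coming (locally on $S$, modulo torsion) from an integral basis of $R^1f_*\mathbb{Z}$ gives an invertible $2r\times 2r$ matrix $\Pi$, built from the relations $e_i=\sum_k\Pi_{ik}f_k$ and their conjugates, with $(\det G)^2=|\det\Pi|^2\,\vol_{L^2}(R^1f_*\mathbb{Z}, \omega_{\xss})$; this $\Pi$ is a period matrix of the variation of Hodge structure on $R^1f_*\mathbb{Z}$, hence independent of $\omega_{\xs}$. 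Substituting $\log\vol_{L^2}(R^1f_*\mathbb{Z}, \omega_{\xss})=2\log\det G-2\log|\det\Pi|$ shows that the difference equals $-2\,dd^c\log|\det\Pi|$, which is independent of $\omega_{\xs}$ (and globally well defined, changing the frames only altering $\log|\det\Pi|$ by the logarithm of the modulus of a nowhere-zero holomorphic function). Combining this with the previous step proves the lemma.

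The main obstacle is precisely this middle step: the $L^2$-metrics on $f_*\Omega^1_{\xss}$ and on $R^1f_*\mathcal{O}_{\XX}$ genuinely vary with the cohomology class of $\omega_{\xss}$ on the fibre surfaces, so one must exhibit the cancellation rather than assert it term by term. The structural fact that makes it work is the $L^2$-orthogonality of the Hodge decomposition on $H^1$ of the fibres, which forces the $2r\times 2r$ Gram determinant to factor as $(\det G)^2$ and thereby ties the metric-dependent volume term to $\det G$ up to the metric-independent period matrix; beyond that, one only needs to be mildly careful about the torsion in $R^1f_*\mathbb{Z}$ and about the (harmless) ambiguity in the choice of holomorphic and integral frames.
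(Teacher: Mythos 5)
Your argument is correct, but it follows a genuinely different route from the paper. The paper does not treat the three terms separately at all: it introduces the Hodge line bundle $\lambda(\xss)=\bigotimes_{p,q}(\det R^qf_*\Omega^p_{\xss})^{(-1)^{p+q}p}$ and the flat de Rham line bundle $\lambda_{dR}(\xss)$, uses the canonical $C^\infty$ isomorphism $\lambda_{dR}(\xss)\cong\lambda(\xss)\otimes\overline{\lambda(\xss)}$ to define an intrinsic Hodge form $\omega_H=dd^c\operatorname{Re}f$ (with $\tau_0=e^f\tau\otimes\bar\tau$ for a flat section $\tau_0$ and a holomorphic frame $\tau$), which is metric-independent by construction, and then cites the formula $\omega_H=\tfrac12\sum_{p,q}(-1)^{p+q}(p-q)\,c_1(R^qf_*\Omega^p_{\xss},h_{L^2})=\omega_{H^{\cdot}(\xss)}$ from the literature. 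You instead prove the statement directly: the $f_*K_{\xss}$-term is intrinsic because the $L^2$ pairing of fibrewise holomorphic $2$-forms is $\int\eta\wedge\bar\eta'$ up to a universal constant, and the $H^1$-terms combine, via the sum identity from the proof of Proposition \ref{p-3-3} and the $L^2$-orthogonality of the Hodge decomposition, into $-2\,dd^c\log|\det\Pi|$ for a period matrix $\Pi$ relative to a flat integral frame, which is metric-independent; in effect you re-derive the low-degree instance of the cited Hodge-form formula. What each approach buys: the paper's is shorter, works uniformly in all degrees, and explains where the weighted combination defining $\omega_{H^{\cdot}(\xss)}$ comes from, at the price of invoking an external reference; yours is elementary and self-contained, using only the holomorphy of the Hodge filtration inside $R^1f_*\mathbb{C}\otimes\mathcal{O}_S$. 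Two small points to tidy up: $f_*K_{\xss}$ need not have rank one (the fibres $X_s^{\iota}$ may be disconnected with $p_g>1$), so write the first step as the statement that the full $L^2$ metric on $f_*K_{\xss}$ equals the intrinsic pairing $\langle\eta,\eta'\rangle=c\int\eta\wedge\bar\eta'$ by polarization, rather than $c_1=-dd^c\log\|\eta\|^2_{L^2}$ for a single section; and when you change the holomorphic frame by $A(s)$ the factor picked up by $|\det\Pi|$ is $|\det A|^2$, which is still killed by $dd^c$, so the global well-definedness claim stands.
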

	
	\begin{proof}
		Set
		\begin{align*}
			\lambda(\xss) = \bigotimes_{p,q} \Bigl( \det R^qf_*\Omega^p_{\xss} \Bigr)^{(-1)^{p+q}p},
		\end{align*}
		and 
		\begin{align*}
			\lambda_{dR}(\xss) = \bigotimes_{k=0}^{2n} \Bigl( \det R^kf_*\mathbb{C} \otimes \mathcal{O}_S \Bigr)^{(-1)^{k}k}.
		\end{align*}
		Then $\lambda(\xss)$ is a holomorphic line bundle on $S$, and $\lambda_{dR}(\xss)$ is a flat holomorphic line bundle on $S$.
		Moreover there exists a canonical isomorphism of smooth line bundles
		\begin{align}\label{al25}
			\lambda_{dR}(\xss) \cong \lambda(\xss) \otimes \overline{\lambda(\xss)}
		\end{align}
		induced by Hodge decomposition.
		
		For a sufficiently small open subset $U \subset S$, let $\tau \in H^0(U, \lambda(\xss))$ be a nowhere-vanishing holomorphic section of $\lambda(\xss)$ on $U$,
		and $\tau_0 \in \Gamma(U, \lambda_{dR}(\xss))$ be a non-zero flat section of $\lambda_{dR}(\xss)$ on $U$.
		By the isomorphism (\ref{al25}), there is a smooth function $f : U \to \mathbb{C}$ such that 
		$$
			\tau_0 = e^f \tau \otimes \bar{\tau}.
		$$
		 Note that the 2-form $dd^c \operatorname{Re}f$ is independent of the choice of the sections $\tau$ and $\tau_0$.
	 	The Hodge form $\omega_H$ is a real smooth differential form on $S$ of type $(1,1)$ defined by 
		$$
				\omega_H|_U = dd^c \operatorname{Re}f
		$$
		on $U$.
		By construction, this is independent of the choice of $\omega_{\xs}$.
	 
		By \cite[1.2.]{MR4406122}, the following formula holds:
		$$
			\omega_H= \frac{1}{2} \sum_{0 \leqq p,q \leqq 2} (-1)^{p+q} (p-q) c_1(R^qf_*\Omega^p_{\xss}, h_{L^2})=\omega_{H^{\cdot}(\xss)},
		$$
		which completes the proof.
	\end{proof}
	
	\begin{thm}\label{t-3-2}
		Let $(X, \iota)$ be a manifold of $K3^{[2]}$-type with antisymplectic involution of type $(M, \mathcal{K})$.
		Then $\tau_{M, \mathcal{K}}(X, \iota)$ is independent of the choice of an $\iota$-invariant \K metric.
		In particular, it is an invariant of $(X, \iota)$.
	\end{thm}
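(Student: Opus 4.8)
The plan is to deduce the theorem from the variational formula of Theorem \ref{p-3-4} applied to a suitably chosen \emph{trivial} family. Let $\omega^{(0)}$ and $\omega^{(1)}$ be two $\iota$-invariant \K forms on $X$. For each $t\in[0,1]$ the form $\omega_t=(1-t)\omega^{(0)}+t\omega^{(1)}$ is again an $\iota$-invariant \K form, since at each point it is a convex combination of positive Hermitian forms, and it is closed and $\iota$-invariant; thus $\{\omega_t\}_{t\in[0,1]}$ is a smooth path of $\iota$-invariant \K forms joining $\omega^{(0)}$ to $\omega^{(1)}$. Fix a smooth function $\rho\colon[0,\infty)\to[0,1]$ with $\rho\equiv 0$ near $0$ and $\rho\equiv 1$ on $[1,\infty)$, and consider the trivial family $f\colon(\X,\iota)=(X\times\mathbb{C},\,\iota\times\mathrm{id})\to\mathbb{C}$, which is a family of $K3^{[2]}$-type manifolds with involution of type $(M,\mathcal{K})$, equipped with the $\iota$-invariant fiberwise \K metric $\omega_s:=\omega_{\rho(|s|^2)}$ for $s\in\mathbb{C}$. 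Since $X\times\mathbb{C}$ is \K, Theorem \ref{p-3-4} applies and shows that the function $\tau_{M,\mathcal{K},\xs}$, whose value at $s$ is $\tau_{M,\mathcal{K}}(X,\iota)$ computed with the metric $\omega_s$, is smooth on $\mathbb{C}$ and satisfies
$$-dd^c\log\tau_{M,\mathcal{K},\xs}=\frac{(t+1)(t+7)}{16}\,c_1(f_*K_{\xs},h_{L^2})+\omega_{H^{\cdot}(\xss)}.$$

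The next step is to verify that the right-hand side vanishes identically for this family. By Lemma \ref{l4-3-1} the form $\omega_{H^{\cdot}(\xss)}$ is independent of the choice of $\iota$-invariant fiberwise \K metric, so it may be evaluated with the constant metric $\omega^{(0)}$; for that choice $\XX=X^{\iota}\times\mathbb{C}$ and the bundles $f_*\Omega^1_{\xss}$, $R^1f_*\mathcal{O}_{\XX}$ and $f_*K_{\xss}$ are trivial with constant $L^2$-metrics, hence all their first Chern forms vanish and $\omega_{H^{\cdot}(\xss)}=0$. For the remaining term, $K_{\xs}$ is canonically trivialized by the constant section $\eta^2$, where $\eta$ generates $H^0(X,\Omega^2_X)$, so $f_*K_{\xs}=\mathcal{O}_{\mathbb{C}}\cdot\eta^2$; since $\|\eta^2\|_{L^2}^2=\int_X\eta^2\wedge\bar\eta^2$ holds for \emph{every} \K metric (the metric determinant cancelling, as recalled in \S\ref{ss-3-1}), the $L^2$-norm of this section does not depend on $s$, whence $c_1(f_*K_{\xs},h_{L^2})=0$. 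Consequently $\log\tau_{M,\mathcal{K},\xs}$ is pluriharmonic on $\mathbb{C}$.

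Finally, I would exploit the rotational symmetry built into the construction. Because $\omega_s$ depends only on $|s|^2$, and $\tau_{M,\mathcal{K}}(X,\iota)$ depends only on $(X,\iota)$ and the chosen metric, we have $\tau_{M,\mathcal{K},\xs}(s)=\psi(|s|^2)$ for some smooth $\psi$ on $[0,\infty)$. A radially symmetric harmonic function on $\mathbb{C}$ equals $a\log|s|^2+b$ on $\mathbb{C}\setminus\{0\}$, and smoothness at the origin forces $a=0$; hence $\psi$ is constant. Evaluating at $s=0$ and at any point with $|s|\ge 1$ gives that $\tau_{M,\mathcal{K}}(X,\iota)$ computed with $\omega^{(0)}$ equals $\tau_{M,\mathcal{K}}(X,\iota)$ computed with $\omega^{(1)}$, which is the assertion; invariance under isomorphisms of $(X,\iota)$ being evident from Definition \ref{d-3-1}, it follows that $\tau_{M,\mathcal{K}}(X,\iota)$ is an invariant. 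The only genuinely substantial inputs are the curvature formula for equivariant Quillen metrics (through Theorem \ref{p-3-4}) and Lemma \ref{l4-3-1}; the point requiring care in the present argument is the vanishing of the right-hand side of the variational formula for the trivial family — in particular the metric-independence of $\|\eta^2\|_{L^2}^2$ — after which the passage from $dd^c$-closedness to constancy is the elementary observation about smooth radial harmonic functions.
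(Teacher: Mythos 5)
Your argument is correct and is essentially the paper's own proof: both apply the variational formula of Theorem \ref{p-3-4} to a trivial family in which only the $\iota$-invariant \K metric interpolates between the two given ones, observe that $c_1(f_*K_{\xs},h_{L^2})$ and $\omega_{H^{\cdot}(\xss)}$ vanish because the complex structure is constant, and conclude that $\log\tau_{M,\mathcal{K},\xs}$ is pluriharmonic and hence constant. The only (immaterial) difference is that the paper works over the compact base $\mathbb{P}^1$ with the interpolation $h_z=\tfrac{1}{|z|^2+1}h_{X,0}+\tfrac{|z|^2}{|z|^2+1}h_{X,\infty}$ and invokes compactness, whereas you work over $\mathbb{C}$ with a cutoff and conclude constancy from smoothness and radial symmetry of a harmonic function.
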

	
	\begin{proof}
		We follow \cite[Theorem 5.7]{MR2047658}.
		We regard $\mathbb{P}^1 = \mathbb{C} \cup \{ \infty \}$.
		Let $h_{X,0}$, $h_{X,\infty}$ be two $\iota$-invariant \K metrics on $X$.
		Set $\mathscr{X} =X \times \mathbb{P}^1$, and let $f:\mathscr{X} \to \mathbb{P}^1$ be the projection.
		For each $z \in \mathbb{P}^1$, we define a $\iota_z$-invariant \K metric $h_z$ on the fiber $X_z$ by
		$$
			h_z = \frac{1}{|z|^2+1} h_{X,0} +\frac{|z|^2}{|z|^2+1} h_{X,\infty}.
		$$
		We may regard $(h_z)_{z \in \mathbb{P}^1}$ as an $\iota$-invariant fiberwise \K metric on $T\X/\mathbb{P}^1$.\par
		Since the family $f:\mathscr{X} \to \mathbb{P}^1$ does not change the complex structures, we have
		$
			c_1(f_*K_{\X/\mathbb{P}^1}, h_{L^2}) = \omega_{H^{\cdot}(\XX/\mathbb{P}^1)} =0.
		$
		By Proposition \ref{p-3-4}, we have $dd^c \log \tau_{M, \mathcal{K}, \X/\mathbb{P}^1} =0$,
		and $\log \tau_{M, \mathcal{K}, \X/\mathbb{P}^1}$ is a pluriharmonic function on $\mathbb{P}^1$.
		Since $\mathbb{P}^1$ is compact, it is constant and $\tau_{M, \mathcal{K}, \X/\mathbb{P}^1}(0) = \tau_{M, \mathcal{K}, \X/\mathbb{P}^1}(\infty)$.
		Thus $\tau_{M, \mathcal{K}}(X, \iota)$ is independent of the choice of an $\iota$-invariant \K metric.
	\end{proof}
	
	By Lemma \ref{l-2-4}, the image of the period map $P_{M, \mathcal{K}} : \tilde{\mathcal{M}}_{M, \mathcal{K}} \to \mathcal{M}_{M, \mathcal{K}}$ is $\mathcal{M}_{M, \mathcal{K}}^{\circ}$.
	
	\begin{lem}\label{l-3-5}
		Let $p \in \mathcal{M}_{M, \mathcal{K}}^{\circ}$, and we define a real number $\tau_{M, \mathcal{K}}(p)$ by
		$$
			\tau_{M, \mathcal{K}}(p) =\tau_{M, \mathcal{K}}(X, \iota) \quad ((X, \iota) \in P_{M, \mathcal{K}}^{-1}(p)).
		$$
		Then $\tau_{M, \mathcal{K}}(p)$ is well-defined.
		Namely, it is independent of the choice of $(X, \iota) \in P_{M, \mathcal{K}}^{-1}(p)$.
	\end{lem}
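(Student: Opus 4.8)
The plan is to reduce the assertion to three already-established facts: the independence of $\tau_{M, \mathcal{K}}(X, \iota)$ of the choice of $\iota$-invariant \K metric (Theorem~\ref{t-3-2}), the smoothness of $\tau_{M, \mathcal{K}, \xs}$ along families of type $(M, \mathcal{K})$ (Theorem~\ref{p-3-4}), and the inseparability statement of Lemma~\ref{l-2-5}. So let $(X, \iota), (X', \iota') \in P_{M, \mathcal{K}}^{-1}(p)$; the goal is to show $\tau_{M, \mathcal{K}}(X, \iota) = \tau_{M, \mathcal{K}}(X', \iota')$.

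First I would invoke Lemma~\ref{l-2-5}: since $P_{M, \mathcal{K}}(X, \iota) = P_{M, \mathcal{K}}(X', \iota')$, the pairs $(X, \iota)$ and $(X', \iota')$ are inseparable. Reading this through the Kuranishi families $\pi : (\mathscr{X}, \iota) \to \operatorname{Def}(X, \iota)$ and $\pi' : (\mathscr{X}', \iota') \to \operatorname{Def}(X', \iota')$, it supplies sequences $b_n \to 0$ in $\operatorname{Def}(X, \iota)$ and $b'_n \to 0$ in $\operatorname{Def}(X', \iota')$ with $(\mathscr{X}_{b_n}, \iota_{b_n}) \cong (\mathscr{X}'_{b'_n}, \iota'_{b'_n})$ as $K3^{[2]}$-type manifolds with involution. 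After shrinking the two Kuranishi families I would record, using Lemma~\ref{al2-3-1-3} for local projectivity and the deformation invariance of the class $\rho$ of Lemma~\ref{l-2-3}, that $\pi$ and $\pi'$ are families of $K3^{[2]}$-type manifolds with involution of type $(M, \mathcal{K})$; hence every fiber lies in $\tilde{\mathcal{M}}_{M, \mathcal{K}}$, the functions $\tau_{M, \mathcal{K}, \mathscr{X}/\operatorname{Def}(X,\iota)}$ and $\tau_{M, \mathcal{K}, \mathscr{X}'/\operatorname{Def}(X',\iota')}$ are defined on the whole deformation germs, and they are smooth there by Theorem~\ref{p-3-4}.

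Then, since $\tau_{M, \mathcal{K}}$ depends only on the isomorphism class of the pair (Theorem~\ref{t-3-2}), the isomorphisms $(\mathscr{X}_{b_n}, \iota_{b_n}) \cong (\mathscr{X}'_{b'_n}, \iota'_{b'_n})$ force $\tau_{M, \mathcal{K}, \mathscr{X}/\operatorname{Def}(X,\iota)}(b_n) = \tau_{M, \mathcal{K}, \mathscr{X}'/\operatorname{Def}(X',\iota')}(b'_n)$ for every $n$. Letting $n \to \infty$ and using continuity together with $b_n \to 0$, $b'_n \to 0$, $\pi^{-1}(0) = (X, \iota)$ and $\pi'^{-1}(0) = (X', \iota')$, I obtain $\tau_{M, \mathcal{K}}(X, \iota) = \tau_{M, \mathcal{K}}(X', \iota')$, which is exactly well-definedness of $\tau_{M, \mathcal{K}}(p)$.

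I expect the main, though modest, obstacle to be pinning down precisely what ``inseparable'' delivers: one must read Lemma~\ref{l-2-5} as producing fibers that become isomorphic along parameter sequences converging to the base points of the two Kuranishi families, not merely at a single pair of parameters, and one must check that all the fibers involved stay of type $(M, \mathcal{K})$ so that $\tau_{M, \mathcal{K}}$ is defined on them; both points are taken care of by the local constancy (deformation invariance) of the data classifying the deformation type. As a fallback I would argue instead from the generic bijectivity of $P_{M, \mathcal{K}}$ (Theorem~\ref{l-2-1-A}): over the dense open subset $\mathcal{M}^{\circ}_{M, \mathcal{K}} \setminus \bar{\mathscr{D}}_{\mathcal{K}}$ the period map is a bijection, so $\tau_{M, \mathcal{K}}$ descends there unambiguously, and to conclude at an arbitrary $p$ one compares the two smooth functions defined on a neighbourhood of $p$ obtained by transporting $\tau_{M, \mathcal{K}, \xs}$ through the (local Torelli) period isomorphisms of the Kuranishi families of $(X, \iota)$ and $(X', \iota')$, notes that they agree off the proper analytic subset $\bar{\mathscr{D}}_{\mathcal{K}}$, and extends the equality by continuity furnished again by Theorem~\ref{p-3-4}.
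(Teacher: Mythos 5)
Your main argument is exactly the paper's proof: invoke Lemma \ref{l-2-5} to produce parameter sequences in the two Kuranishi families converging to the base points with isomorphic fibers, equate the values of $\tau_{M,\mathcal{K}}$ at those fibers (legitimate by Theorem \ref{t-3-2}), and pass to the limit using the smoothness/continuity of $\tau_{M,\mathcal{K},\xs}$ from Theorem \ref{p-3-4}. The extra bookkeeping (type $(M,\mathcal{K})$ along the germ, local projectivity) and the fallback via Theorem \ref{l-2-1-A} are fine but not needed; the proposal is correct and follows the same route as the paper.
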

	
	\begin{proof}
		Let $(X, \iota), (X', \iota') \in P_{M, \mathcal{K}}^{-1}(p)$, and let $\pi : (\mathscr{X}, \iota) \to \operatorname{Def}(X, \iota)$ and $\pi' : (\mathscr{X}', \iota') \to \operatorname{Def}(X', \iota')$ be the Kuranishi families, respectively.
		By Lemma \ref{l-2-5}, there exist sequences $\{ p_m \}^{\infty}_{m=1}$ in $\operatorname{Def}(X, \iota)$ and $\{ p'_m \}^{\infty}_{m=1}$ in $\operatorname{Def}(X', \iota')$ such that $\pi^{-1}(p_m) \cong \pi'^{-1}(p'_m)$ for all $m \geqq 1$ and $p_m \to p$, $p'_m \to p$ $(m \to \infty)$. 
		Therefore we have
		$$
			\tau_{M, \mathcal{K}}(X, \iota) =\lim_{m \to \infty} \tau_{M, \mathcal{K}, \mathscr{X}/\operatorname{Def}(X, \iota)}(p_m) = \lim_{m \to \infty} \tau_{M, \mathcal{K},\mathscr{X}'/\operatorname{Def}(X', \iota')}(p'_m) =\tau_{M, \mathcal{K}}(X', \iota'),
		$$
		which completes the proof.
	\end{proof}
	
	By Lemma \ref{l-3-5}, we obtain a real-valued smooth function $\tau_{M, \mathcal{K}}$ on $\mathcal{M}_{M, \mathcal{K}}^{\circ}$.
	
	Fix a vector $l \in M_{\mathbb{R}}$ with $l^2 \geqq 0$.
	Recall that $\Omega^+_{M^{\perp}}$ is a bounded symmetric domain of type IV.
	The Bergman metric $\omega_{\Omega^+_{M^{\perp}}}$ on $\Omega^+_{M^{\perp}}$ is defined by 
	\begin{align}\label{al5-3-2}
		\omega_{\Omega^+_{M^{\perp}}}([\eta]) =-dd^c \log B_{M^{\perp}}([\eta]) \quad ([\eta] \in \Omega^+_{M^{\perp}}),
	\end{align}
	where
	$$
		B_{M^{\perp}}([\eta]) =\frac{(\eta, \bar{\eta})}{|(\eta, l)^2|} \quad ([\eta] \in \Omega^+_{M^{\perp}}).
	$$
	Since $\omega_{\Omega^+_{M^{\perp}}}$ is $\Gamma_{M^{\perp}, \mathcal{K}}$-invariant, it induces an orbifold \K form $\omega_{\mathcal{M}_{M, \mathcal{K}}}$ on $\mathcal{M}_{M, \mathcal{K}}$.
	
	\begin{lem}\label{l6-3-1}
		There exists a smooth $(1,1)$-form $\sigma_{ M, \mathcal{K} }$ on $\mathcal{M}^{\circ}_{M, \mathcal{K}}$ such that for any $(X, \iota) \in \tilde{\mathcal{M}}_{M, \mathcal{K}}$ we have
		$$
			P_{M, \mathcal{K}}^*\sigma_{M, \mathcal{K}} = \omega_{H^{\cdot}(\XX/\operatorname{Def}(X, \iota))},
		$$
		where $P_{M, \mathcal{K}} : \operatorname{Def}(X, \iota) \to \mathcal{M}_{M, \mathcal{K}}$ is the period map of the Kuranishi family $\pi : (\X, \iota) \to \operatorname{Def}(X, \iota)$ of $(X, \iota)$.
	\end{lem}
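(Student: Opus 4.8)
The plan is to obtain $\sigma_{M,\mathcal{K}}$ directly from the descent of $\tau_{M,\mathcal{K}}$ established in Lemma \ref{l-3-5} together with the curvature formula of Theorem \ref{p-3-4}, so that no gluing of locally defined forms is required. Fix $(X,\iota)\in\tilde{\mathcal{M}}_{M,\mathcal{K}}$ and let $\pi:(\X,\iota)\to\operatorname{Def}(X,\iota)$ be its Kuranishi family, with relative fixed locus $\XX$ and period map $P_{M,\mathcal{K}}:\operatorname{Def}(X,\iota)\to\mathcal{M}_{M,\mathcal{K}}$. Since this family is of type $(M,\mathcal{K})$, Theorem \ref{p-3-4} applied to it gives
$$\omega_{H^{\cdot}(\XX/\operatorname{Def}(X,\iota))}=-dd^c\log\tau_{M,\mathcal{K},\mathscr{X}/\operatorname{Def}(X,\iota)}-\frac{(t+1)(t+7)}{16}\,c_1\!\left(\pi_*K_{\mathscr{X}/\operatorname{Def}(X,\iota)},h_{L^2}\right).$$
By Lemma \ref{l-3-5}, $\tau_{M,\mathcal{K},\mathscr{X}/\operatorname{Def}(X,\iota)}=P_{M,\mathcal{K}}^*\tau_{M,\mathcal{K}}$ for the smooth function $\tau_{M,\mathcal{K}}$ on $\mathcal{M}^{\circ}_{M,\mathcal{K}}$, so the first term on the right is $P_{M,\mathcal{K}}^*(-dd^c\log\tau_{M,\mathcal{K}})$. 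Hence everything reduces to exhibiting one smooth $(1,1)$-form $\beta_{M,\mathcal{K}}$ on $\mathcal{M}_{M,\mathcal{K}}$ with $c_1(\pi_*K_{\mathscr{X}/\operatorname{Def}(X,\iota)},h_{L^2})=P_{M,\mathcal{K}}^*\beta_{M,\mathcal{K}}$ for all $(X,\iota)$, after which one sets $\sigma_{M,\mathcal{K}}:=-dd^c\log\tau_{M,\mathcal{K}}-\frac{(t+1)(t+7)}{16}\beta_{M,\mathcal{K}}$.

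The main point is then the descent of $c_1(\pi_*K_{\mathscr{X}/\operatorname{Def}(X,\iota)},h_{L^2})$, which I would handle as follows. On each fibre $X_s$ the holomorphic symplectic form trivialises $K_{X_s}=\Omega^4_{X_s}$ via $\eta_s^2$, and for a holomorphic form of top degree the $L^2$-density is metric-independent, so $\|\eta_s^2\|^2_{L^2}=\int_{X_s}\eta_s^2\wedge\bar\eta_s^2$; by the Fujiki relation (\cite[Corollary 23.9]{MR1963559}, cf.\ (\ref{f-3-1})) this equals a fixed positive constant times $q_{X_s}(\eta_s,\bar\eta_s)^2=(\eta_s,\bar\eta_s)^2$. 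Choosing an admissible marking $\alpha$ on the contractible base $\operatorname{Def}(X,\iota)$ identifies $\pi_*\Omega^2_{\mathscr{X}/\operatorname{Def}(X,\iota)}$ with the pull-back $\widetilde P^*\mathcal{O}(-1)$ of the tautological bundle along the lifted period map $\widetilde P:\operatorname{Def}(X,\iota)\to\Omega^+_{M^{\perp}}$, hence $\pi_*K_{\mathscr{X}/\operatorname{Def}(X,\iota)}\cong\bigl(\pi_*\Omega^2_{\mathscr{X}/\operatorname{Def}(X,\iota)}\bigr)^{\otimes2}\cong\widetilde P^*\mathcal{O}(-2)$, and by the identity above its $L^2$-metric becomes a constant multiple of the square of the tautological Hodge metric $v\mapsto(v,\bar v)$. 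Since $(\cdot,\cdot)$ is $O(M^{\perp})$-invariant, this metric is $\Gamma_{M^{\perp},\mathcal{K}}$-invariant and independent of $\alpha$, so its Chern form descends to $\mathcal{M}_{M,\mathcal{K}}$; using (\ref{al5-3-2}) and that $\log B_{M^{\perp}}$ differs locally from $\log(v,\bar v)$ by a pluriharmonic function, one gets $c_1(\pi_*K_{\mathscr{X}/\operatorname{Def}(X,\iota)},h_{L^2})=2\,P_{M,\mathcal{K}}^*\omega_{\mathcal{M}_{M,\mathcal{K}}}$, the factor $2$ reflecting that $\eta_s^2$ is the square of a section of the Hodge line. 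Thus $\beta_{M,\mathcal{K}}=2\omega_{\mathcal{M}_{M,\mathcal{K}}}$ works, and as a by-product $\sigma_{M,\mathcal{K}}=-dd^c\log\tau_{M,\mathcal{K}}-\tfrac{(t+1)(t+7)}{8}\omega_{\mathcal{M}_{M,\mathcal{K}}}$, which is the formula of Theorem \ref{t-0-A}.

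Assembling, $\sigma_{M,\mathcal{K}}$ as defined is a smooth $(1,1)$-form on $\mathcal{M}^{\circ}_{M,\mathcal{K}}$, and the two displayed identities give $P_{M,\mathcal{K}}^*\sigma_{M,\mathcal{K}}=\omega_{H^{\cdot}(\XX/\operatorname{Def}(X,\iota))}$ for every $(X,\iota)\in\tilde{\mathcal{M}}_{M,\mathcal{K}}$, as required. I expect the only genuinely technical point to be the metric comparison in the second paragraph --- verifying that the $L^2$-metric on the Hodge line $\pi_*K_{\mathscr{X}/\operatorname{Def}(X,\iota)}$ is, up to a harmless constant, pulled back from the tautological metric on $\Omega^+_{M^{\perp}}$; everything else is formal once Theorem \ref{p-3-4} and Lemma \ref{l-3-5} are in place. (An alternative not using $\tau_{M,\mathcal{K}}$: local Torelli makes each $P_{M,\mathcal{K}}$ a local isomorphism onto an open subset, so $\omega_{H^{\cdot}(\XX/\operatorname{Def}(X,\iota))}$ can be transported to $\mathcal{M}^{\circ}_{M,\mathcal{K}}$ locally, and one then checks, using the inseparability Lemma \ref{l-2-5}, Lemma \ref{l4-3-1}, and real-analyticity of the Hodge form, that the local forms agree on overlaps; the route above is shorter.)
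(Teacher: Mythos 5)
Your argument is correct, but it takes a genuinely different route from the paper. The paper proves the lemma intrinsically, without using $\tau_{M,\mathcal{K}}$ at all: it is essentially your parenthetical alternative. Using local Torelli it identifies each $\operatorname{Def}(X,\iota,\alpha)$ with an open subset of $\Omega^+_{M^{\perp}}\setminus\mathscr{D}_{M^{\perp}}$, and on an overlap of two such charts it invokes Theorem \ref{l-2-1-A} to identify the two Kuranishi families over the complement of the divisor $\mathscr{D}_{\mathcal{K}}$, Lemma \ref{l4-3-1} to see that the Hodge forms $\omega_{H^{\cdot}(\XX/\operatorname{Def}(X,\iota,\alpha))}$ then agree there, and density plus smoothness to get agreement on the whole overlap; $\Gamma_{M^{\perp},\mathcal{K}}$-invariance then lets the glued form descend to $\mathcal{M}^{\circ}_{M,\mathcal{K}}$. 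You instead write $\sigma_{M,\mathcal{K}}:=-dd^c\log\tau_{M,\mathcal{K}}-\tfrac{(t+1)(t+7)}{8}\omega_{\mathcal{M}_{M,\mathcal{K}}}$ and deduce the pullback identity from Theorem \ref{p-3-4}, the descent of $\tau_{M,\mathcal{K}}$ in Lemma \ref{l-3-5}, and the comparison $c_1(\pi_*K_{\X/\operatorname{Def}(X,\iota)},h_{L^2})=2P_{M,\mathcal{K}}^*\omega_{\mathcal{M}_{M,\mathcal{K}}}$; this last identity (which the paper only establishes inside the proof of Theorem \ref{t5-3-1}, exactly as you do via the Fujiki relation and pluriharmonicity of $\log|(\eta,l)|^2$) is correct, and with your definition Theorem \ref{t5-3-1} becomes true by construction. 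What each approach buys: yours is shorter and avoids the overlap-gluing, but it makes $\sigma_{M,\mathcal{K}}$ depend a priori on the analytic-torsion invariant and leans on the smoothness of $\tau_{M,\mathcal{K}}$ on $\mathcal{M}^{\circ}_{M,\mathcal{K}}$, which the paper asserts after Lemma \ref{l-3-5} but whose justification is of the same chart-by-chart nature you were trying to bypass; the paper's construction keeps the lemma purely Hodge-theoretic (so that $\sigma_{M,\mathcal{K}}$ is manifestly the descended Hodge form of the fixed-point family, matching the description of $P_{M,\mathcal{K}}^*\sigma_{M,\mathcal{K}}$ by Chern forms of Hodge bundles in the introduction) and independent of $\tau_{M,\mathcal{K}}$, so that Theorem \ref{t5-3-1} is then a genuine statement about the invariant. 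Two minor points to make explicit in your write-up: both $\omega_{\mathcal{M}_{M,\mathcal{K}}}$ and your $\sigma_{M,\mathcal{K}}$ are smooth only in the orbifold sense (induced by $\Gamma_{M^{\perp},\mathcal{K}}$-invariant forms on $\Omega^+_{M^{\perp}}\setminus\mathscr{D}_{M^{\perp}}$), which is also the convention implicit in the paper's proof; and the identity $\tau_{M,\mathcal{K},\X/\operatorname{Def}(X,\iota)}=P_{M,\mathcal{K}}^*\tau_{M,\mathcal{K}}$ uses that every fiber of the Kuranishi family is again of type $(M,\mathcal{K})$, so that its period lies in $\mathcal{M}^{\circ}_{M,\mathcal{K}}$.
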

	
	\begin{proof}
		Let $(X, \iota) \in \tilde{\mathcal{M}}_{M, \mathcal{K}}$ and let $\pi : (\mathscr{X}, \iota) \to \operatorname{Def}(X, \iota)$ be the Kuranishi family.
		Fix an isomorphism $\alpha : R^2\pi_* \mathbb{Z} \to L_{2, \operatorname{Def}(X, \iota)}$ such that for each $s \in \operatorname{Def}(X, \iota)$,
		$\alpha_s : H^2(X_s, \mathbb{Z}) \to L_2$ is an admissible marking for $(M, \mathcal{K})$.
		The period map $P_{M,\mathcal{K}} : \operatorname{Def}(X, \iota) \to \Omega^+_{M^{\perp}}$ is defined by
		$$
			P_{M,\mathcal{K}}(s) = \alpha_s( H^{2,0}(X_s) ) \quad (s \in \operatorname{Def}(X, \iota)).
		$$
		By the local Torelli theorem, we may assume that $P_{M,\mathcal{K}}$ is an isomorphism onto its image.
		We set $\operatorname{Def}(X, \iota, \alpha) = P_{M,\mathcal{K}} ( \operatorname{Def}(X, \iota) )$ 
		and we identify $\operatorname{Def}(X, \iota, \alpha)$ with $\operatorname{Def}(X, \iota)$.
		It suffices to show that $\{ \omega_{H^{\cdot}(\XX/\operatorname{Def}(X, \iota, \alpha))} \}_{(X, \iota, \alpha)}$ patch together to a smooth $(1,1)$-form $\tau$ on $\Omega^+_{M^{\perp}} \setminus \mathscr{D}_{M^{\perp}}$
		and that $\tau$ is $\Gamma_{M^{\perp}, \mathcal{K}}$-invariant.
		
		Let $(X', \iota', \alpha')$ be another triple and its Kuranishi family is denoted by $\pi' : (\mathscr{X}', \iota') \to \operatorname{Def}(X', \iota', \alpha')$.
		We assume that $\operatorname{Def}(X, \iota, \alpha) \cap \operatorname{Def}(X', \iota', \alpha') \neq \emptyset$.
		Set $U = \operatorname{Def}(X, \iota, \alpha) \cap \operatorname{Def}(X', \iota', \alpha')$ and $U^{\circ} = U \setminus \mathscr{D}_{\mathcal{K}}$.
		Here $\mathscr{D}_{\mathcal{K}}$ is a $\Gamma_{M^{\perp}, \mathcal{K}}$-invariant effective reduced divisor on $\Omega^+_{M^{\perp}}$ defined in Theorem \ref{l-2-1-A}.
		By Theorem \ref{l-2-1-A}, $\pi : (\pi^{-1}(U^{\circ}), \iota|_{\pi^{-1}(U^{\circ})}) \to U^{\circ}$ and $\pi' : ((\pi')^{-1}(U^{\circ}), \iota'|_{(\pi')^{-1}(U^{\circ})}) \to U^{\circ}$ are isomorphic.
		By Lemma \ref{l4-3-1},
		$$
			\omega_{H^{\cdot}(\XX/\operatorname{Def}(X, \iota, \alpha))}|_{U^{\circ}} =\omega_{H^{\cdot}((\X')^{\iota} / \operatorname{Def}(X', \iota', \alpha'))}|_{U^{\circ}}.
		$$
		Since $U^{\circ}$ is dense in $U$ and since both $\omega_{H^{\cdot}(\XX/\operatorname{Def}(X, \iota, \alpha))}$ and $\omega_{H^{\cdot}((\X')^{\iota} / \operatorname{Def}(X', \iota', \alpha'))}$ are smooth, we have
		$$
			\omega_{H^{\cdot}(\XX/\operatorname{Def}(X, \iota, \alpha))}|_{U} =\omega_{H^{\cdot}((\X')^{\iota} / \operatorname{Def}(X', \iota', \alpha'))}|_{U}.
		$$
		Since $\{ \operatorname{Def}(X, \iota, \alpha) \}$ is an open covering of $\Omega^+_{M^{\perp}} \setminus \mathscr{D}_{M^{\perp}}$, there exists a smooth $(1,1)$-form $\tau$ on $\Omega^+_{M^{\perp}} \setminus \mathscr{D}_{M^{\perp}}$ such that
		$$
			\tau|_{\operatorname{Def}(X, \iota, \alpha)} = \omega_{H^{\cdot}(\XX/\operatorname{Def}(X, \iota, \alpha))}
		$$
		for each $(X, \iota, \alpha)$.
		
		For $\gamma \in \Gamma_{M^{\perp}, \mathcal{K}}$, there exists $g \in \Gamma(\mathcal{K})$ such that $\gamma = g|_{M^{\perp}}$.
		Since 
		$$
			\gamma^* \omega_{H^{\cdot}(\XX/\operatorname{Def}(X, \iota, \alpha))} = \omega_{H^{\cdot}(\XX/\operatorname{Def}(X, \iota, g^{-1} \circ \alpha))},
		$$ 
		the form $\tau$ is $\Gamma_{M^{\perp}, \mathcal{K}}$-invariant.
		Therefore $\tau$ induces a smooth $(1,1)$-form $\sigma_{ M, \mathcal{K} }$ on $\mathcal{M}^{\circ}_{M, \mathcal{K}}$.
	\end{proof}
	
	\begin{thm}\label{t5-3-1}
		The following equation of differential forms on $\mathcal{M}^{\circ}_{M, \mathcal{K}}$ holds:
		$$
			-dd^c \log \tau_{M, \mathcal{K}, \mathscr{X}/S} = \frac{(t+1)(t+7)}{8} \omega_{\mathcal{M}_{M, \mathcal{K}}} +\sigma_{ M, \mathcal{K} }.
		$$
	\end{thm}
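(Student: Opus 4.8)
The plan is to feed the local curvature identity of Theorem~\ref{p-3-4} into two identifications supplied by the period map: (a) the $L^2$-metric on $f_*K_{\xs}$ is, up to a multiplicative constant depending only on the $K3^{[2]}$-deformation type, the pullback under $P_{M,\mathcal{K}}$ of the tautological metric $(\eta,\bar\eta)^2$ on the square of the Hodge line bundle, so that
$$
	c_1(f_*K_{\xs},h_{L^2}) = 2\,P^*_{M,\mathcal{K}}\omega_{\mathcal{M}_{M,\mathcal{K}}};
$$
and (b) Lemma~\ref{l6-3-1}, which gives $\omega_{H^{\cdot}(\XX/\operatorname{Def}(X,\iota))} = P^*_{M,\mathcal{K}}\sigma_{M,\mathcal{K}}$. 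Granting (a), substituting both into Theorem~\ref{p-3-4} for the Kuranishi family multiplies $\tfrac{(t+1)(t+7)}{16}$ by $2$ and produces exactly the asserted equation.

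First I would reduce to a local statement on $\Omega^{\circ}_{M^{\perp}}$. By Lemma~\ref{l-3-5}, $\tau_{M,\mathcal{K}}$ is a well-defined smooth function on $\mathcal{M}^{\circ}_{M,\mathcal{K}}$; by the local Torelli theorem the period map $P_{M,\mathcal{K}}\colon\operatorname{Def}(X,\iota)\to\Omega^{\circ}_{M^{\perp}}$ of a Kuranishi family is a biholomorphism onto an open subset, and (using Lemma~\ref{l-2-4}) these opens cover $\Omega^{\circ}_{M^{\perp}}$. Since $\tau_{M,\mathcal{K}}$, $\omega_{\mathcal{M}_{M,\mathcal{K}}}$ and $\sigma_{M,\mathcal{K}}$ are all induced from $\Gamma_{M^{\perp},\mathcal{K}}$-invariant objects on $\Omega^{\circ}_{M^{\perp}}$, it suffices to prove the pulled-back identity on each $\operatorname{Def}(X,\iota)$, which is precisely the setting of Theorem~\ref{p-3-4} applied to the Kuranishi family of $(X,\iota)$ (its fibers are of type $(M,\mathcal{K})$, since the type is a deformation invariant).

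Second, the metric comparison (a), which is the crux. The line bundle $f_*\Omega^2_{\xs}$ is the Hodge bundle; via an admissible marking it is identified with the pullback of the tautological bundle $\mathcal{O}_{\mathbb{P}(M^{\perp}_{\mathbb{C}})}(-1)$, a local holomorphic section being a local lift $\tilde\eta$ of $P_{M,\mathcal{K}}$ with $(\tilde\eta,\overline{\tilde\eta})$ equal to the Beauville--Bogomolov--Fujiki pairing $(\eta,\bar\eta)$ (markings are isometries). Wedging yields an isomorphism $(f_*\Omega^2_{\xs})^{\otimes 2}\xrightarrow{\ \sim\ }f_*K_{\xs}$, $\eta\otimes\eta\mapsto\eta^2$. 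Because $K_X$ is a bundle of top-degree forms, the $\omega$-dependence of the fiberwise metric on $K_X$ cancels that of the volume form, so that $|| \eta^2 ||^2_{L^2} = \int_X\eta^2\wedge\bar\eta^2$ (the normalization already used in §\ref{ss-3-1}), and in particular this is independent of the \K form. Since $q_X(\eta)=q_X(\bar\eta)=0$ and $\eta^3=0$, applying the Fujiki relation $q_X(\alpha)^2=c_X\int_X\alpha^4$ to $\alpha=\eta+\bar\eta$ gives $\int_X\eta^2\wedge\bar\eta^2=c'_X(\eta,\bar\eta)^2$ with $c'_X>0$ depending only on the deformation type. Hence $|| \eta^2 ||^2_{L^2}=c'_X(\tilde\eta,\overline{\tilde\eta})^2$ and
$$
	c_1(f_*K_{\xs},h_{L^2}) = -dd^c\log || \eta^2 ||^2_{L^2} = -2\,dd^c\log(\tilde\eta,\overline{\tilde\eta}).
$$
On the other hand $\omega_{\mathcal{M}_{M,\mathcal{K}}}$ lifts on $\Omega^{\circ}_{M^{\perp}}$ to $-dd^c\log B_{M^{\perp}} = -dd^c\log(\eta,\bar\eta)+dd^c\log|(\eta,l)^2|$, and the last term vanishes wherever $(\eta,l)$ is a nonvanishing holomorphic function, hence everywhere by density and smoothness. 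This proves (a).

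Finally, substituting (a) and (b) into Theorem~\ref{p-3-4} for the Kuranishi family gives, on $\operatorname{Def}(X,\iota)$,
$$
	-dd^c\log\bigl(\tau_{M,\mathcal{K}}\circ P_{M,\mathcal{K}}\bigr)=\tfrac{(t+1)(t+7)}{16}\cdot 2\,P^*_{M,\mathcal{K}}\omega_{\mathcal{M}_{M,\mathcal{K}}}+P^*_{M,\mathcal{K}}\sigma_{M,\mathcal{K}},
$$
and since $P_{M,\mathcal{K}}$ is a local biholomorphism and such charts cover $\Omega^{\circ}_{M^{\perp}}$, the identity descends to the asserted equation on $\mathcal{M}^{\circ}_{M,\mathcal{K}}$. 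I expect the genuinely delicate point to be the metric comparison (a): the analytic content is entirely inside Theorem~\ref{p-3-4} and Lemma~\ref{l6-3-1} is already available, so the remaining effort is to keep the universal constants straight, to verify carefully that the $L^2$-metric on $f_*K_{\xs}$ is \K-independent, and to match it with the Bergman potential up to a pluriharmonic term.
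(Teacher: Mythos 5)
Your proposal is correct and follows essentially the same route as the paper: apply Theorem \ref{p-3-4} to the Kuranishi family, identify $c_1(\pi_*K_{\xs},h_{L^2})$ with $2\,\omega_{\Omega^+_{M^{\perp}}}$ via the Bergman potential (\ref{al5-3-2}), and conclude with Lemma \ref{l6-3-1}. The only difference is that you work out in detail (via the Fujiki relation and the identity $\|\eta^2\|_{L^2}^2=\int_X\eta^2\wedge\bar\eta^2$) the metric comparison that the paper asserts in a single line.
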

	
	\begin{proof}
		Let $(X, \iota, \alpha)$ be as in Lemma \ref{l6-3-1}.
		Applying Theorem \ref{p-3-4} to the Kuranishi family $\pi : (\mathscr{X}, \iota) \to \operatorname{Def}(X, \iota, \alpha)$,
		the following equation on $\operatorname{Def}(X, \iota, \alpha)$ holds:
		$$
			-dd^c \log \tau_{M, \mathcal{K}, \X/\operatorname{Def}(X, \iota, \alpha)} = \frac{(t+1)(t+7)}{16} c_1(\pi_*K_{\X/\operatorname{Def}(X, \iota, \alpha)}, h_{L^2}) +\omega_{H^{\cdot}(\XX/\operatorname{Def}(X, \iota, \alpha))}.
		$$
		By (\ref{al5-3-2}), we have $c_1(\pi_*K_{\X/\operatorname{Def}(X, \iota, \alpha)}, h_{L^2})  =2 \omega_{\Omega^+_{M^{\perp}}}|_{\operatorname{Def}(X, \iota, \alpha)}$.
		By Lemma \ref{l6-3-1}, we obtain the desired equation.
	\end{proof}
	
	\begin{rem}\label{r5-3-3}
		Let $(X, \iota)$ be a manifold of $K3^{[2]}$-type with involution of type $(M, \mathcal{K})$ and let $f : (\X, \iota) \to S$ be a deformation of $(X, \iota)$.
		Consider the equivariant analytic torsion $\tau_{\iota}(\overline{\mathcal{O}_{\X}})$ of trivial line bundle on $\X$ with the canonical metric.
		In the same manner as in this subsection \S 3.3., we have
		$$
			 dd^c \log \left\{ \tau_{\iota} (\overline{\mathcal{O}_{\X}}) A_0(\xs, h_{\xs}) \right\} =0,
		$$
		where $A_0(\xs, h_{\xs})$ is a function on $S$ defined by
		\begin{align*}
			A_0 (\xs, h_{\xs})&(s) = A_0(X_s, \iota_s, h_s) :=\exp 
				\left[
					\frac{1}{96} \int_{X_s^{\iota}}  \log  \left\{  
							\frac{\omega_s^4/4!}{\eta_s^2 \wedge \bar{\eta_s}^2} \frac{||\eta_s^2||^2_{L^2}}{Vol(X_s, \omega_s)}  
						\right\} \Omega_0|_{X_s^{\iota}} \right] ,\\
			\biggl. \Omega_0 = &\ax^2 -2\ay -\au^2 +3\av  \biggr. .
		\end{align*}
		Therefore $\tau_{\iota}(\overline{\mathcal{O}_{\X}})$ does not induce an interesting invariant which reflects the complex structure of a $K3^{[2]}$-type manifold with antisymplectic involution.
		
		Similarly, consider the equivariant analytic torsion $\tau_{\iota}(\overline{\Omega^2_{\xs}})$ of $\Omega^2_{\xs}$ on $\X$ with the $\G$-invariant hermitian metric induced from the fiberwise \K metric $h_{\xs}$.
		In the same manner as in this subsection \S 3.3., we have
		$$
			 dd^c \log \tau_{\iota}(\overline{\Omega^2_{\xs}}) =0.
		$$
		Therefore, $\tau_{\iota}(\overline{\Omega^2_{\xs}})$ does not induce an interesting invariant which reflects the complex structure of a $K3^{[2]}$-type manifold with antisymplectic involution.
		
		It is very likely that $\tau_{\iota}( \overline{\mathcal{O}_{X}} ) A_0(X, \iota, h_X)$ and $\tau_{\iota} ( \overline{\Omega^2_X} )$ are constant functions on the space $\mathcal{M}_{M, \mathcal{K}}^{\circ}$.
		These are the reason why we consider the equivariant analytic torsion $\tau_{\iota}(\overline{\Omega^1_{X}})$ of holomorphic cotangent bundle with $\G$-invariant hermitian metric induced from the fiberwise \K metric $h_{X}$.
	\end{rem}
	
	As an application of this invariant, we show the isotriviality of families of $K3^{[2]}$-type manifolds with antisymplectic involution.
	We assume the following properties of $(M, \mathcal{K})$:
	\begin{itemize}
		\item $t \neq -1, -7$.
		\item Each manifold of $K3^{[2]}$-type $(X, \iota)$ with antisymplectic involution of type $(M, \mathcal{K})$ satisfies $q(X^{\iota})=p_g(X^{\iota})=0$.
	\end{itemize} 
	Here $q(X^{\iota})= \dim H^1(X^{\iota}, \mathcal{O}_{X^{\iota}})$ be the irregularity of $X^{\iota}$,
	and $p_g(X^{\iota}) = \dim H^2(X^{\iota}, \mathcal{O}_{X^{\iota}})$ be the geometric genus of $X^{\iota}$.
	
	\begin{exa}\label{e3-3-1}
		Let $M_0= U(2) \oplus E_8(2) \text{ or } \Lambda_k(2)^{\perp}$ $(k=0, \dots, 9)$, 
		where 
		$$
			\Lambda_k = I_2 \oplus -I_{10-k} \quad (k \neq 8), \qquad \Lambda_8 = I_2 \oplus -I_{2} \text{ or } U \oplus U \quad (k = 8).
		$$
		By \cite[Theorem 4.2.2.]{MR633160}, the fixed locus $Y^{\sigma}$ of a 2-elementary K3 surface $(Y, \sigma)$ of type $M_0$ is the empty set or consists of smooth rational curves.
		Let $M=M_0 \oplus \mathbb{Z}e$ and let $\mathcal{K} \in \ktm$ be a natural chamber.
		Then $(M, \mathcal{K})$ satisfies the above assumption.
	\end{exa}
	
	\begin{thm}\label{t3-3-1}
		Suppose that $(M, \mathcal{K})$ satisfies the above assumption.
		Then there exists no irreducible projective curve on $\mathcal{M}_{M, \mathcal{K}}^{\circ}$.
		Moreover if $f : (\mathscr{X}, \iota) \to S$ is a family of $K3^{[2]}$-type manifolds with antisymplectic involution of type $(M, \mathcal{K})$ and $S$ is compact,
		then $f$ is isotrivial.
		Namely, any two fibers of $f$ are isomorphic.
	\end{thm}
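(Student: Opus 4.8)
The plan is to exploit the hypotheses $t\neq-1,-7$ and $q(X^{\iota})=p_g(X^{\iota})=0$ to force the correction term $\sigma_{M,\mathcal{K}}$ in Theorem~\ref{t5-3-1} to vanish, so that $-dd^c\log\tau_{M,\mathcal{K}}$ equals a nonzero multiple of the Kähler form $\omega_{\mathcal{M}_{M,\mathcal{K}}}$; both assertions then follow by integrating over curves and invoking Chow's theorem. \emph{Step 1 (vanishing of $\sigma_{M,\mathcal{K}}$).} For a family $f:(\mathscr{X},\iota)\to S$ of type $(M,\mathcal{K})$, the three locally free sheaves entering
$\omega_{H^{\cdot}(\xss)}=c_1(f_*\Omega^1_{\xss},h_{L^2})-c_1(R^1f_*\mathcal{O}_{\mathscr{X}^{\iota}},h_{L^2})-2c_1(f_*K_{\xss},h_{L^2})$
have fiberwise ranks $h^{1,0}(X_s^{\iota})$, $h^{0,1}(X_s^{\iota})$, and $h^{2,0}(X_s^{\iota})$. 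Since $X_s^{\iota}$ is a compact Kähler surface (Lemma~\ref{l-2-12}), Hodge symmetry identifies each of these with $q(X_s^{\iota})$ or $p_g(X_s^{\iota})$, which vanish by hypothesis; hence all three sheaves are zero and $\omega_{H^{\cdot}(\xss)}=0$ identically. Applying this to Kuranishi families and using that in Lemma~\ref{l6-3-1} the period maps $P_{M,\mathcal{K}}:\operatorname{Def}(X,\iota)\to\mathcal{M}^{\circ}_{M,\mathcal{K}}$ are local biholomorphisms, I conclude $\sigma_{M,\mathcal{K}}=0$. So Theorem~\ref{t5-3-1} reduces to
\[
 -dd^c\log\tau_{M,\mathcal{K}}=\frac{(t+1)(t+7)}{8}\,\omega_{\mathcal{M}_{M,\mathcal{K}}}\qquad\text{on }\mathcal{M}^{\circ}_{M,\mathcal{K}},
\]
with $(t+1)(t+7)\neq0$ since $t$ is an odd integer different from $-1$ and $-7$.

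\emph{Step 2 (no projective curve).} First I would pass to a smooth model: by Selberg's lemma choose a torsion-free finite-index subgroup $\Gamma'\subset\Gamma_{M^{\perp},\mathcal{K}}$, so that $\mathcal{M}'=\Omega^{\circ}_{M^{\perp}}/\Gamma'$ is a smooth quasi-projective variety carrying a finite surjective morphism $\mathcal{M}'\to\mathcal{M}^{\circ}_{M,\mathcal{K}}$, and the pullbacks $\tau'$ of $\tau_{M,\mathcal{K}}$ and $\omega'$ of $\omega_{\mathcal{M}_{M,\mathcal{K}}}$ are a genuinely smooth positive function and a Kähler form satisfying $-dd^c\log\tau'=\frac{(t+1)(t+7)}{8}\omega'$. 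If $C\subset\mathcal{M}^{\circ}_{M,\mathcal{K}}$ were an irreducible projective curve, its preimage in $\mathcal{M}'$ would be a complete $1$-dimensional subvariety of the quasi-projective variety $\mathcal{M}'$, hence projective; let $\nu:\tilde C\to\mathcal{M}'$ be the normalization of one of its irreducible components, a non-constant holomorphic map from a smooth projective curve. Then $\int_{\tilde C}\nu^*\omega'>0$ because $\omega'$ is Kähler and $\nu$ is non-constant, while $\int_{\tilde C}dd^c(\nu^*\log\tau')=0$ by Stokes' theorem on the compact curve $\tilde C$; this contradicts the displayed identity since $(t+1)(t+7)\neq0$. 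Hence no such $C$ exists.

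\emph{Step 3 (isotriviality).} Let $f:(\mathscr{X},\iota)\to S$ be as in the statement with $S$ compact and connected. The period map $P:=P_{M,\mathcal{K}}:S\to\mathcal{M}_{M,\mathcal{K}}$ is holomorphic, and since each $(X_s,\iota_s)$ lies in $\tilde{\mathcal{M}}_{M,\mathcal{K}}$ it factors through $\mathcal{M}^{\circ}_{M,\mathcal{K}}$ by Lemma~\ref{l-2-4}. As $S$ is compact, $P(S)$ is a compact analytic subvariety of $\mathcal{M}^{\circ}_{M,\mathcal{K}}$, hence of the projective Baily–Borel compactification $\mathcal{M}^{*}_{M,\mathcal{K}}$, and therefore a projective variety by Chow's theorem. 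If $\dim P(S)\geq1$ then $P(S)$ contains an irreducible projective curve lying in $\mathcal{M}^{\circ}_{M,\mathcal{K}}$, contradicting Step~2; thus $P$ is constant. Finally, near any $s_0\in S$ the universality of the Kuranishi family of $(X_{s_0},\iota_{s_0})$ provides a neighbourhood $U$ of $s_0$ and a holomorphic map $\varphi:U\to\operatorname{Def}(X_{s_0},\iota_{s_0})$ with $\varphi(s_0)=0$ such that $f^{-1}(U)\to U$ is the pullback of the Kuranishi family along $\varphi$, compatibly with the involutions; its period map is $P_{M,\mathcal{K}}\circ\varphi$, so constancy of $P$ together with the local Torelli theorem (the Kuranishi period map is an open embedding, as used in Lemma~\ref{l6-3-1}) forces $\varphi\equiv0$, whence $f^{-1}(U)\cong U\times X_{s_0}$ as families with involution. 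Therefore the isomorphism class of $(X_s,\iota_s)$ is locally constant in $s$, hence constant on the connected base $S$, and $f$ is isotrivial.

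\emph{Main obstacle.} The essential point is Step~1: recognizing that the hypotheses on $q$ and $p_g$ annihilate $\sigma_{M,\mathcal{K}}$ and leave a curvature proportional to a Kähler form with nonzero constant. Once that reduction is in hand, Step~2 is a routine Stokes argument and Step~3 is a standard combination of Chow's theorem, universality of the Kuranishi family, and local Torelli; the only real technical care is in handling the orbifold structure of $\mathcal{M}^{\circ}_{M,\mathcal{K}}$, which I deal with via the smooth finite cover $\mathcal{M}'$ introduced in Step~2.
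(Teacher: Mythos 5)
Your proposal is correct, and its core coincides with the paper's argument: you identify that $q(X^{\iota})=p_g(X^{\iota})=0$ kills all three bundles entering $\omega_{H^{\cdot}(\mathscr{X}^{\iota}/S)}$, hence $\sigma_{M,\mathcal{K}}=0$, reducing Theorem \ref{t5-3-1} to $-dd^c\log\tau_{M,\mathcal{K}}=\frac{(t+1)(t+7)}{8}\,\omega_{\mathcal{M}_{M,\mathcal{K}}}$ with nonzero constant, and you then rule out projective curves and conclude constancy of the period map via Chow's theorem, exactly as in the paper. The differences are in execution. For the curve step, the paper stays on the orbifold $\mathcal{M}^{\circ}_{M,\mathcal{K}}$ and applies the maximum principle to the plurisubharmonic function $-\tfrac{8}{(t+1)(t+7)}\log\tau_{M,\mathcal{K}}$ restricted to the curve, whereas you pass to a torsion-free finite-index subgroup (Selberg) to get a smooth quasi-projective cover, normalize, and use Stokes; both work, and your route has the mild advantage of sidestepping any discussion of subharmonicity at orbifold or curve singularities at the cost of invoking Selberg's lemma. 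For the isotriviality step, the paper simply asserts that non-isotriviality forces the period map to be non-constant; since the period map is only generically injective, this implication is not completely immediate, and your Step 3 --- pulling the family back from the Kuranishi family of a fiber, lifting the constant map through the properly discontinuous quotient $\Omega^{+}_{M^{\perp}}\to\mathcal{M}_{M,\mathcal{K}}$, and using local Torelli to force the classifying map to be constant --- supplies precisely the justification the paper leaves implicit. So your write-up is, if anything, slightly more complete than the published proof on that last point.
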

	
	\begin{proof}
		Let $(X, \iota) \in \tilde{\mathcal{M}}_{M, \mathcal{K}}$ and let $\pi : (\mathscr{X}, \iota) \to \operatorname{Def}(X, \iota)$ be the Kuranishi family.
		Since $q(X_s^{\iota})=p_g(X_s^{\iota})=0$ for each fiber $(X_s, \iota_s)$, we have $\omega_{H^{\cdot}(\XX/\operatorname{Def}(X, \iota))} =0$.
		By the construction of $\sigma_{ M, \mathcal{K} }$, we have $\sigma_{ M, \mathcal{K} } =0$ on $\mathcal{M}^{\circ}_{M, \mathcal{K}}$.
		By Theorem \ref{t5-3-1}, we have the formula
		\begin{align}\label{al3-3-7}
			-dd^c \log \tau_{M, \mathcal{K}} = \frac{(t+1)(t+7)}{8} \omega_{\mathcal{M}_{M, \mathcal{K}}}.
		\end{align}
		on $\mathcal{M}_{M, \mathcal{K}}^{\circ}$.
		Since $\omega_{\mathcal{M}_{M, \mathcal{K}}}$ is a positive form, $-\frac{8}{(t+1)(t+7)} \log \tau_{M, \mathcal{K}}$ is a plurisubharmonic function on $\mathcal{M}_{M, \mathcal{K}}^{\circ}$.\par
		Suppose that there is an irreducible projective curve $C$ on $\mathcal{M}_{M, \mathcal{K}}^{\circ}$.
		Then the function $\left(-\frac{8}{(t+1)(t+7)} \log \tau_{M, \mathcal{K}} \right)|_C$ is a subharmonic function on $C$.
		By the maximal principle, it is a constant function, which contradicts the formula (\ref{al3-3-7}) since $\omega_{\mathcal{M}_{M, \mathcal{K}}}$ is K\"ahler.\par 
		Let $f : (\mathscr{X}, \iota) \to S$ be a family of $K3^{[2]}$-type manifolds with antisymplectic involution of type $(M, \mathcal{K})$.
		Assume that $S$ is compact.
		The period map $P_{M, \mathcal{K}} : S \to \mathcal{M}_{M, \mathcal{K}}^{\circ}$ is a proper holomorphic map.
		By Remmert proper mapping theorem, its image $P_{M, \mathcal{K}}(S)$ is an analytic subset of the quasi-projective variety $\mathcal{M}_{M, \mathcal{K}}^{\circ}$.
		Suppose that $f$ is not isotrivial.
		Then $P_{M, \mathcal{K}}$ is not a constant map and $P_{M, \mathcal{K}}(S)$ contains an irreducible projective curve.
		This contradicts to the first statements.
	\end{proof}

\bibliography{Reference}
\bibliographystyle{plain}

\Address

\end{document}